\def\MMSE{{{\rm MMSE}}}
\def\MSE{{{\rm MSE}}}
\def\DMSE{{{\rm DMSE}}}
\def\Tr{{{\rm Tr}}}
\def\Var{{{\rm Var}}}
\def\Cov{{{\rm Cov}}}
\newcommand{\setappendix}{Appendix~\thesection:~~}
\newcommand{\setsection}{\thesection~~}
\titleformat{\section}{\bfseries\LARGE}{%
	\ifnum\pdfstrcmp{\@currenvir}{appendices}=0
	\setappendix
	\else
	\setsection
\fi}{0em}{}
\begin{document}
\title{Fundamental limits of low-rank matrix estimation:\\ the non-symmetric case}
\author{Léo Miolane\footnote{%
Département d’informatique de l’ENS, École normale supérieure, CNRS, PSL Research University, 75005
Paris, France \& Inria. Email: leo.miolane@inria.fr}}
\date{}
\maketitle

\begin{abstract}
	We consider the high-dimensional inference problem where the signal is a low-rank matrix which is corrupted by an additive Gaussian noise. 
	Given a probabilistic model for the low-rank matrix, we compute the limit in the large dimension setting for the mutual information between the signal and the observations, as well as the matrix minimum mean squared error, while the rank of the signal remains constant. 
	This allows to locate the information-theoretic threshold for this estimation problem, i.e.\ the critical value of the signal intensity below which it is impossible to recover the low-rank matrix.
\end{abstract}

\section{Introduction}

Estimating a low-rank matrix from a noisy observation is a fundamental problem in statistical inference with applications in machine learning, signal processing or information theory. 
It encompass numerous classical statistical problems from PCA, sparse PCA to high-dimensional Gaussian mixture clustering.
Consider a signal matrix $\bbf{U} \bbf{V}^{\intercal}$ where $\bbf{U}$ and $\bbf{V}$ are two $n \times k$ and $m \times k$ independent matrices. We will be interested in the low-rank, high-dimensional setting, i.e.\ $k$ will remain fixed as $n,m \to \infty$ and $m/n \to \alpha>0$. Given a noisy observation $\bbf{Y}$ of the matrix $\bbf{U} \bbf{V}^{\intercal}$ we would like to reconstruct the signal.
We consider here additive white Gaussian noise $\bbf{Z}$ (where $Z_{i,j} \iid \mathcal{N}(0,1)$):
\begin{equation}\label{eq:problem}
	\bbf{Y} = \sqrt{\frac{\lambda}{n}} \, \bbf{U}\bbf{V}^{\intercal} +\bbf{Z} \,,
\end{equation}
where $\lambda$ captures the strength of the signal.
This model is often called ``spiked'' Wishart model (or spiked covariance model) and was introduced in statistics by Johnstone~\cite{johnstone2001distribution}.
In this paper, we aim at computing the best achievable performance (in term of mean squared error) for the estimation of the low-rank signal. We prove limiting expressions for the mutual information $I((\bbf{U},\bbf{V});\bbf{Y})$ and the minimum mean squared error (MMSE), as conjectured in~\cite{DBLP:conf/allerton/LesieurKZ15}. This allows us to compute the information-theoretic threshold for this estimation problem.
More precisely, we derive a critical value $\lambda_c$ such that when $\lambda < \lambda_c$ no algorithm can retrieve the signal better than a ``random guess'' whereas for $\lambda > \lambda_c$ the signal can be estimated more accurately.
As mentioned above, high-dimensional Gaussian mixture clustering can be seen as a particular instance of the matrix factorization problem~\eqref{eq:problem} (see~\cite{lesieur2016uv,banks2016information}). The present work justify therefore the non-rigorous derivation of the information-theoretic threshold for Gaussian mixture clustering from~\cite{lesieur2016uv}.
\\

Random matrix models like~\eqref{eq:problem} has received much attention in random matrix theory. In 1976 Edwards and Jones~\cite{edwards1976eigenvalue} observed using the non-rigorous ``replica'' method: ``there is a critical finite value [for $\lambda$] above which a single eigenvalue [of $\bbf{Y}/\sqrt{n}$] splits off from the semi-circular continuum of eigenvalues''. This phase transition phenomenon for the largest eigenvalue of perturbed random matrices has then been rigorously understood in the seminal work of Baik, Ben Arous and Péché~\cite{baik2005phase} and following papers~\cite{feral2007largest,benaych2012singular}. 
Suppose for instance that $\bbf{U}$ and $\bbf{V}$ are vectors with i.i.d.\ coefficients with zero mean and unit variance. Results from~\cite{benaych2012singular} give then
\begin{itemize}
	\item[--] if $\lambda\leq 1$, the top singular value of $\bbf{Y}/\sqrt{n}$ converges a.s.\ to $2$ as $n\to \infty$. Let $\bbf{\hat{u}}$ and $\bbf{\hat{v}}$ be the respectively the left and right unit singular vectors of $\bbf{Y}/\sqrt{n}$ associated with this top singular value. Then $\bbf{\hat{u}}$ and $\bbf{\hat{v}}$ have trivial correlation with the planted solution: $\frac{1}{n} \bbf{\hat{u}}^{\intercal} \bbf{U} \to 0$ and $\frac{1}{n} \bbf{\hat{v}}^{\intercal} \bbf{V} \to 0$.
	\item[--] if $\lambda>1$, the top eigenvalue of $\bbf{Y}/\sqrt{n}$ converges a.s.\ to $\sqrt{\lambda}+1/\sqrt{\lambda}>2$ as $n \to \infty$. Let $\bbf{\hat{u}}$ and $\bbf{\hat{v}}$ be the respectively the left and right unit singular vectors of $\bbf{Y}/\sqrt{n}$ associated with this top singular value. Then $\bbf{\hat{u}}$ and $\bbf{\hat{v}}$ achieve a non-trivial correlation with the solution: $(\frac{1}{n} \bbf{\hat{u}}^{\intercal} \bbf{U})^2 \to 1 - 1/\lambda$ and $(\frac{1}{n}\bbf{\hat{v}}^{\intercal} \bbf{V})^2 \to 1 - 1/\lambda$.
\end{itemize}
This means that when $\lambda$ goes below $1$, the singular vector associated with the top singular value becomes suddenly uninformative. The question then arises: is it still possible to build a non trivial estimator of the signal when $\lambda \leq 1$ ? How does the optimal performance depends on $\lambda$ and the priors $P_U$ and $P_V$ on the entries of $\bbf{U}$ and $\bbf{V}$?
\\

To answer this question, one has to analyze the performance of the optimal estimator (in term of mean squared error). This estimator is known to be the posterior mean of the signal given the observations. However computing such an estimator leads to untractable expressions and exponential-time algorithms. This motivated the study of efficient message passing algorithms for solving the matrix factorization problem~\eqref{eq:problem}. 
Rangan and Fletcher~\cite{rangan2012iterative} proposed an Approximate Message Passing (AMP) algorithm (based on the previous work of~\cite{donoho2009message}) to estimate the low-rank signal.
Deshpande and Montanari~\cite{deshpande2014information} considered then the case of Bernoulli $\Ber(\epsilon)$ priors and showed that AMP was optimal for $\epsilon$ above a certain critical value $\epsilon_* > 0$. Interestingly, Lesieur et al.~\cite{DBLP:conf/isit/LesieurKZ15} conjectured using non-rigorous methods from statistical physics that the estimation problem may become hard for $\epsilon \leq \epsilon_*$: it would still be possible to recover the signal partially, but not with AMP or any polynomial-time algorithm. 
Consequently, a careful analysis of AMP algorithm as in~\cite{deshpande2014information} would fail to derive information-theoretic threshold in the presence of such hard phase.
Lesieur et al.\ also conjectured in~\cite{DBLP:conf/allerton/LesieurKZ15} limiting expression for the mutual information and the MMSE.
This conjecture was recently proved for the symmetric ($\bbf{U}=\bbf{V}$) case by~\cite{barbier2016mutual,lelarge2016fundamental}.

A completely different proof technique based on second moment computations and contiguity has been used to derive upper and lower bounds for the information-theoretic threshold. See the recent works~\cite{banks2016information,perry2016optimality,perry2016statistical} and the references therein. These bounds are however not expected to be tight in the regime considered in this paper.
\\

In this paper we extend and deepen the ideas of~\cite{lelarge2016fundamental} to prove the limiting expressions for the mutual information and the MMSE conjectured in~\cite{DBLP:conf/allerton/LesieurKZ15}. It builds on the mathematical approach of the Sherrington-Kirkpatrick (SK) model: see the books of Talagrand~\cite{talagrand2010meanfield1} and Panchenko~\cite{panchenko2013SK}.
Our estimation problem is indeed equivalent to a bipartite spin glass model that is closely related to SK model studied in the groundbreaking book of Mézard, Parisi and Virasoro~\cite{mezard1987spin}.
The methods developed in~\cite{mezard1987spin} have then been widely applied to other spin glass models, and in particular models arising from Bayesian estimation problems. 
This class of models enjoys specific properties due to the presence of the planted (hidden) solution of the estimation problem and to the fact that the parameters of the inference channel (noise, priors...) are supposed to be known by the statistician. In the statistical physics jargon, the system is on the ``Nishimori line'' (see~\cite{nishimori2001statistical,iba1999nishimori,korada2009exact}), a region of the phase diagram where no ``replica symmetry breaking'' occurs. These properties will play a crucial role in our proofs. 
They imply that important quantities will concentrate around their means: the system will then be characterized using only few parameters.
For a detailed introduction to the connections between statistical physics and statistical inference, see~\cite{zdeborova2016statistical}.
Bipartite spin glasses are also of special interest because they are related to Hopfield model~\cite{hopfield1982neural}.
The bipartite SK model has been investigated in~\cite{barra2010replica,barra2011equilibrium}, but the study relies on an additional hypothesis, namely the ``replica-symmetric'' assumption which will be verified for our ``planted'' model. 
\\

\noindent\textbf{Acknowledgments.} The author is grateful to M. Lelarge for numerous comments and feedback and to L. Zdeborov\'a and F. Krzakala for pointing out interesting papers.

\section{Main results}

\subsection{Rank-one matrix estimation} \label{sec:rs_formula}
For simplicity, we first focus on the rank-one case ($k=1$). The extension to finite-rank is then presented in Section~\ref{sec:multidim}.
Let $P_U$ and $P_V$ be two probability distributions on $\R$ with finite second moment and such that $\Var_{P_U}(U) , \Var_{P_V}(V) > 0$. 
Let $\lambda > 0$  and consider independent vectors $(U_i)_{1 \leq i \leq n} \iid P_U$ and $(V_j)_{1 \leq j \leq m} \iid P_V$. We observe
\begin{equation} \label{eq:model}
	Y_{i,j} = \sqrt{\frac{\lambda}{n}} U_i V_j + Z_{i,j}, \quad \text{for} \ 1 \leq i \leq n \ \text{and} \ 1 \leq j \leq m \,,
\end{equation}
where $Z_{i,j}$ are i.i.d.\ standard normal random variables that account for noise. In the following, $\E$ will denote the expectation with respect to the variables $(\bbf{U},\bbf{V})$ and $\bbf{Z}$. 

We will be interested in the high-dimensional limit where $n,m \to \infty$ while $m/n \to \alpha >0$.
Our main quantity of interest is the minimal mean squared error for the estimation of the matrix $\bbf{U} \bbf{V}^{\intercal}$ given the observation of the matrix $\bbf{Y}$:
\begin{align*}
	\MMSE_n(\lambda) 
	&= \min_{\hat{\theta}} \left\{ \frac{1}{n m } \sum_{i=1}^n \sum_{j=1}^m \E\left[ \left(U_i V_j - \hat{\theta}_{i,j}(\bbf{Y}) \right)^2\right] \right\} \label{eq:def_mmse_min_intro}
	=\frac{1}{n m} \sum_{i,j} \E \left[ \left(U_i V_j - \E\left[U_i V_j |\bbf{Y} \right]\right)^2\right],
\end{align*}
where the minimum is taken over all estimators $\hat{\theta}$ (i.e.\ measurable functions of the observations $\bbf{Y}$).
In order to get an upper bound on the matrix minimum mean squared error, 
we consider the ``dummy'' estimator given by
$\hat{\theta}_{i,j} = \E[U_i V_j]$ for all $i,j$. This estimator does not depend on the observations $\bbf{Y}$ and achieves a mean squared error
$$
\DMSE = \lim_{n,m \to \infty} \frac{1}{n m} \sum_{i,j} \E \left[ \left(U_i V_j - \E [U_i V_j] \right)^2\right] 
= \E[U^2] \E[V^2] - (\E U)^2 (\E V)^2 \,.
$$

Our goal is to locate the information-theoretic threshold for the estimation problem~\eqref{eq:model}, i.e.\ the value of $\lambda$ below which it not possible to estimate the matrix better than a dummy estimator, when $n \to \infty$.
We need therefore to compute the limit of $\MMSE_n(\lambda)$ as $n \to \infty$, for any value of $\lambda$. We will see in the sequel that this reduces to the computation of the limit of the mutual information $\frac{1}{n} I\big((\bbf{U},\bbf{V}); \bbf{Y} \big)$.

\subsection{Connection with statistical physics}

We will now connect our statistical estimation problem~\eqref{eq:model} with statistical physics concepts, namely the notions of Hamiltonian, free energy, replicas and overlap.
It will be convenient to express the posterior distribution of $(\bbf{U},\bbf{V})$ given $\bbf{Y}$ in a ``Boltzmann'' form.
We define the Hamiltonian 
\begin{equation} \label{eq:hamiltonian}
	H_n(\bbf{u},\bbf{v}) = \sum_{i=1}^n \sum_{j=1}^m \sqrt{\frac{\lambda}{n}} u_i v_j Z_{i,j} + \frac{\lambda}{n} u_i U_i v_j V_j - \frac{\lambda}{2n} u_i^2 v_j^2
	, \ \ \text{for} \  (\bbf{u},\bbf{v}) \in \R^n \times \R^m.
\end{equation}
The posterior distribution of $(\bbf{U},\bbf{V})$ given $\bbf{Y}$ is then
\begin{equation} \label{eq:posterior}
	P\big((\bbf{u},\bbf{v}) \big| \bbf{Y}\big) = \frac{1}{\mathcal{Z}_n} P_U^{\otimes n}(\bbf{u}) P_V^{\otimes m}(\bbf{v}) \exp\left(\sum_{i,j} \sqrt{\frac{\lambda}{n}} Y_{i,j} u_i v_j - \frac{\lambda}{2n}u_i^2 v_j^2\right)= \frac{1}{\mathcal{Z}_n} P_U^{\otimes n}(\bbf{u}) P_V^{\otimes m}(\bbf{v}) e^{H_n(\bbf{u},\bbf{v})} \,,
\end{equation}
where $\mathcal{Z}_n = \!\!\int \! dP_U^{\otimes n}(\bbf{u}) dP_V^{\otimes m}(\bbf{v}) e^{H_n(\bbf{u},\bbf{v})}$ is the appropriate normalization. The free energy of this model is defined as
$$
F_n = \frac{1}{n} \E \log \mathcal{Z}_n = \frac{1}{n} \E \log \left( \int_{\bbf{u},\bbf{v}}dP_U^{\otimes n}(\bbf{u}) dP_V^{\otimes m}(\bbf{v}) e^{H_n(\bbf{u},\bbf{v})} \right).
$$
In statistical physics, the free energy is a fundamental quantity that encodes a lot of information about the system. For instance, its derivative with respect to the inverse temperature corresponds to the average energy. In our context of statistical inference, the free energy contains a lot of relevant information about our estimation problem. In particular, we will see that it corresponds (up to an affine transformation) to the mutual information $\frac{1}{n} I\big((\bbf{U},\bbf{V}); \bbf{Y} \big)$ of the observation channel. Moreover, its derivative with respect to the signal-to-noise ratio $\lambda$ (which plays the role of the inverse temperature) is linked to the minimum mean-square error of our problem by the ``I-MMSE Theorem'', see~\cite{guo2005mutual}. The asymptotic behavior of the mutual information and the $\MMSE$ will therefore be linked to the limit of the free energy. 
\\

We introduce now central notions of the study of spin glasses: \textit{Gibbs measure}, \textit{replica} and \textit{overlap}.
In our context we define the \textit{Gibbs measure} $\langle \cdot \rangle_n$ as the posterior distribution~\eqref{eq:posterior}. $\langle \cdot \rangle_n$ is thus a random probability measure (depending on $\bbf{Y}$) on $\R^n \times \R^m$. 
We will write, for $k\geq 1$ (provided that the expectation on the right is well-defined)
$$
\big\langle f(\bbf{u}^{(1)},\bbf{v}^{(1)}, \dots, \bbf{u}^{(k)},\bbf{v}^{(k)}) \big\rangle_n
=
\frac{1}{\mathcal{Z}_n^k} \int
f(\bbf{u}^{(1)}, \dots, \bbf{v}^{(k)})
dP_U^{\otimes n}(\bbf{u}^{(1)}) \dots dP_V^{\otimes m}(\bbf{v}^{(k)})
\exp \Big(\sum_{l=1}^k H_n(\bbf{u}^{(l)},\bbf{v}^{(l)}) \Big) \,,
$$
the expectation of a function $f$ applied to $k$ i.i.d.\ samples (conditionally to $\bbf{Y}$) $(\bbf{u}^{(1)},\bbf{v}^{(1)}), \dots, (\bbf{u}^{(k)},\bbf{v}^{(k)})$ from $\langle \cdot \rangle_n$. 
Such samples will be called \textit{replicas}.
For $\bbf{x}^{(1)},\bbf{x}^{(2)} \in \R^N$ we define the \textit{overlap} between $\bbf{x}^{(1)}$ and $\bbf{x}^{(2)}$ as the rescaled scalar product
\begin{equation} \label{eq:overlap}
	\bbf{x}^{(1)}.\bbf{x}^{(2)} = \frac{1}{N} \sum_{i=1}^N x^{(1)}_i x^{(2)}_i.
\end{equation}

Before moving to the asymptotic analysis of the inference problem~\eqref{eq:model}, we need to state a fundamental identity (which is in fact nothing more than Bayes rule) that will be used repeatedly. It was used by Nishimori (see for instance~\cite{nishimori2001statistical}) and extensively used in the context of Bayesian inference, see~\cite{iba1999nishimori,korada2010tight,zdeborova2016statistical}.
It express the fact that the planted configuration $(\bbf{U},\bbf{V})$ behaves like a sample $(\bbf{u},\bbf{v})$ from the posterior distribution $\P((\bbf{U},\bbf{V})=.|\bbf{Y})$.

\begin{proposition}[Nishimori identity] \label{prop:nishimori}
	Let $(\bbf{X},\bbf{Y})$ be a couple of random variables on a polish space. Let $k \geq 1$ and let $\bbf{x}^{(1)}, \dots, \bbf{x}^{(k)}$ be $k$ i.i.d.\ samples (given $\bbf{Y}$) from the distribution $\P(\bbf{X}=. | \bbf{Y})$, independently of every other random variables. Let us denote $\langle \cdot \rangle$ the expectation with respect to $\P(\bbf{X}=. | \bbf{Y})$ and $\E$ the expectation with respect to $(\bbf{X},\bbf{Y})$. Then, for all continuous bounded function $f$
	$$
	\E \langle f(\bbf{Y},\bbf{x}^{(1)}, \dots, \bbf{x}^{(k)}) \rangle
	=
	\E \langle f(\bbf{Y},\bbf{x}^{(1)}, \dots, \bbf{x}^{(k-1)}, \bbf{X}) \rangle \,.
	$$
\end{proposition}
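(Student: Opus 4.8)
The plan is to work conditionally on $\bbf{Y}$ and to exploit the defining property of the Bayesian setting: given $\bbf{Y}$, the true signal $\bbf{X}$ is itself distributed according to the posterior $\P(\bbf{X} = \cdot \mid \bbf{Y})$ and, by construction, is independent of the replicas; hence $\bbf{X}$ is just one more replica. First I would use the Polish assumption to fix a regular conditional probability distribution, i.e.\ a Markov kernel $\mu$ with $\mu_{\bbf{Y}}(\cdot) = \P(\bbf{X} \in \cdot \mid \bbf{Y})$ almost surely, so that by definition of the replicas
\[
\langle g(\bbf{Y}, \bbf{x}^{(1)}, \dots, \bbf{x}^{(\ell)}) \rangle = \int \cdots \int g(\bbf{Y}, \bbf{z}^{(1)}, \dots, \bbf{z}^{(\ell)}) \, d\mu_{\bbf{Y}}(\bbf{z}^{(1)}) \cdots d\mu_{\bbf{Y}}(\bbf{z}^{(\ell)})
\]
for every bounded measurable $g$. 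In particular the left-hand side of the identity, $\langle f(\bbf{Y}, \bbf{x}^{(1)}, \dots, \bbf{x}^{(k)})\rangle$, is a bounded measurable function of $\bbf{Y}$ alone, which I will call $\Phi(\bbf{Y})$; applying $\E$ just integrates $\Phi$ against the law of $\bbf{Y}$.

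For the right-hand side I would use the tower property, conditioning on $\bbf{Y}$:
\[
\E \langle f(\bbf{Y}, \bbf{x}^{(1)}, \dots, \bbf{x}^{(k-1)}, \bbf{X})\rangle = \E\Big[ \E\big[\, \langle f(\bbf{Y}, \bbf{x}^{(1)}, \dots, \bbf{x}^{(k-1)}, \bbf{X})\rangle \,\big|\, \bbf{Y}\big]\Big].
\]
Since the replicas $\bbf{x}^{(1)}, \dots, \bbf{x}^{(k-1)}$ are independent of $\bbf{X}$ conditionally on $\bbf{Y}$, and $\bbf{X}\mid\bbf{Y} \sim \mu_{\bbf{Y}}$, the inner conditional expectation equals
\[
\int \Big( \int \cdots \int f(\bbf{Y}, \bbf{z}^{(1)}, \dots, \bbf{z}^{(k-1)}, \bbf{x}) \, d\mu_{\bbf{Y}}(\bbf{z}^{(1)}) \cdots d\mu_{\bbf{Y}}(\bbf{z}^{(k-1)}) \Big) \, d\mu_{\bbf{Y}}(\bbf{x}),
\]
and Fubini's theorem — legitimate because $f$ is bounded and all measures at play are probability measures — turns this into $\Phi(\bbf{Y})$ after relabelling $\bbf{x}$ as a $k$-th replica. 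Taking $\E$ over $\bbf{Y}$ then gives $\E\langle f(\bbf{Y}, \bbf{x}^{(1)}, \dots, \bbf{x}^{(k-1)}, \bbf{X})\rangle = \E[\Phi(\bbf{Y})] = \E\langle f(\bbf{Y}, \bbf{x}^{(1)}, \dots, \bbf{x}^{(k)})\rangle$, which is the claim.

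There is no genuine obstacle: the statement is, in essence, the observation that conditionally on $\bbf{Y}$ the $(k+1)$-tuple $(\bbf{x}^{(1)}, \dots, \bbf{x}^{(k)}, \bbf{X})$ is exchangeable (indeed i.i.d.\ with common law $\mu_{\bbf{Y}}$), so swapping $\bbf{x}^{(k)}$ with $\bbf{X}$ leaves the expectation unchanged. The only points needing a little care are the existence of the regular conditional probability $\mu_{\bbf{Y}}$ — which is precisely why the Polish hypothesis is imposed — and the measurability and integrability bookkeeping required to apply Fubini and the tower property, both of which are routine given that $f$ is continuous and bounded.
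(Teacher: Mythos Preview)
Your proof is correct and follows essentially the same idea as the paper: conditionally on $\bbf{Y}$, the signal $\bbf{X}$ and the replicas $\bbf{x}^{(1)},\dots,\bbf{x}^{(k)}$ are i.i.d.\ from the posterior, so the $(k+1)$-tuples $(\bbf{Y},\bbf{x}^{(1)},\dots,\bbf{x}^{(k)})$ and $(\bbf{Y},\bbf{x}^{(1)},\dots,\bbf{x}^{(k-1)},\bbf{X})$ have the same law. The paper states this in two sentences; you have simply unpacked the measure-theoretic bookkeeping (regular conditional distributions, tower property, Fubini) that underlies it.
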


\begin{proof}
	It is equivalent to sample the couple $(\bbf{X},\bbf{Y})$ according to its joint distribution or to sample first $\bbf{Y}$ according to its marginal distribution and then to sample $\bbf{X}$ conditionally to $\bbf{Y}$ from its conditional distribution $\P(\bbf{X}=.|\bbf{Y})$. Thus the $(k+1)$-tuple $(\bbf{Y},\bbf{x}^{(1)}, \dots,\bbf{x}^{(k)})$ has the same law than $(\bbf{Y},\bbf{x}^{(1)},\dots,\bbf{x}^{(k-1)},\bbf{X})$.
\end{proof}
\\

We will now illustrate the concepts of Gibbs distribution, replicas and the Nishimori identity by computing the derivative of the free energy $F_n$ with respect to the signal $\lambda$. The arguments used in this computation will be used repeatedly in the proofs of this paper.
$\lambda \mapsto F_n$ is differentiable over $(0, + \infty)$ and for $\lambda >0$
$$
F_n'(\lambda) = \frac{1}{n} \E \Big\langle \sum_{i,j} 
\frac{1}{2 \sqrt{\lambda n}} u_i v_j Z_{i,j} + \frac{1}{n} u_i U_i v_j V_j - \frac{1}{2n} u_i^2 v_j^2 \Big\rangle_n,
$$
where $(\bbf{u},\bbf{v})$ is a replica sampled from the Gibbs distribution $\langle \cdot \rangle_n$. Let $1 \leq i,j \leq n$. Using the Gaussian integration by parts, we have
$$
\E \left[
	Z_{i,j} \big\langle u_i v_j \big\rangle_n
\right]
=
\sqrt{\frac{\lambda}{n}} \E \left[
	\big\langle u_i^2 v_j^2 \big\rangle_n
\right]
- 
\sqrt{\frac{\lambda}{n}} \E \left[
	\big\langle u_i v_j \big\rangle_n^2
\right].
$$
Let $(\bbf{u}^{(1)},\bbf{v}^{(1)})$ and  $(\bbf{u}^{(2)},\bbf{v}^{(2)})$ be two independent replicas from the Gibbs distribution $\langle \cdot \rangle_n$. 
We have then $\E \langle u_i v_j \rangle_n^2 = \E \langle u_i^{(1)} v_j^{(1)} u_i^{(2)} v_j^{(2)} \rangle_n$.
We use now the Nishimori identity (Proposition~\ref{prop:nishimori}) to obtain $\E \langle u_i^{(1)} v_j^{(1)} u_i^{(2)} v_j^{(2)} \rangle_n = \E \langle u_i v_j U_i V_j \rangle_n$. Consequently $ \E \left[ Z_{i,j} \big\langle u_i v_j \big\rangle_n \right] = \sqrt{\frac{\lambda}{n}} \E \big\langle u_i^2 v_j^2 \big\rangle_n - \sqrt{\frac{\lambda}{n}} \E \langle u_i v_j U_i V_j \rangle_n$ and finally
\begin{equation} \label{eq:f_n_p}
	F_n'(\lambda) = \frac{1}{2n^2} \E \Big\langle \sum_{i,j} 
		u_i U_i v_j V_j \Big\rangle_n = \frac{m}{2n} \E \Big\langle (\bbf{u}.\bbf{U})(\bbf{v}.\bbf{V}) \Big\rangle_n \,.
\end{equation}

\subsection{Effective scalar channel} \label{sec:scalar_channel}

As we will see in Theorem~\ref{th:rs_formula}, the limit of $F_n$ is linked to a simple 1-dimensional inference problem.
Let $P_X$ be a probability distribution with finite second moment.
Let $\gamma \geq 0$ and consider the following observation channel:
\begin{equation} \label{eq:scalar_channel}
	Y = \sqrt{\gamma} X + Z \,,
\end{equation}
where the signal $X \sim P_X$ and the noise $Z \sim \mathcal{N}(0,1)$ are independent random variables.
Note that the posterior distribution of $X$ knowing $Y$ is then given by
\begin{equation} \label{eq:posterior_scalar}
dP(X=x|Y) = \frac{1}{\cZ(Y)} dP_X(x)e^{Y\sqrt{\gamma}x-\frac{\gamma x^2}{2}}
\end{equation}
where $\cZ(Y)$ is the normalization: $\cZ(Y) = \int dP_X(x)e^{Y\sqrt{\gamma}x-\frac{\gamma x^2}{2}} =\int dP_X(x)e^{\gamma x X+\sqrt{\gamma}x Z -\frac{\gamma x^2}{2}}$.
We define
\begin{align}
	F_{P_X}: \gamma \in \R_+ \mapsto \E\big[X \E[X|Y]\big] =
	\E \left[ \frac{1}{\mathcal{Z}(Y)} \int  x X e^{\sqrt{\gamma} Y x - \frac{\gamma}{2} x^2}d P_X(x) \right] \,,
\label{eq:def_fx}
\end{align}
We define also the free energy of the channel as $\E [\log \cZ(Y)]$, which is a function of the signal-to-noise ratio $\gamma$:
\begin{equation} \label{eq:def_psi}
	\psi_{P_X}: \gamma \in \R_+ \mapsto \E \log \left( \int dP_X(x) \exp\big(\sqrt{\gamma} x Z + \gamma x X - \frac{\gamma}{2} x^2\big) 
\right) \,.
\end{equation}

The main properties of the functions $\psi_{P_X}$ and $F_{P_X}$ are presented in Appendix~\ref{sec:scalar_channel_proofs}.
In the sequel we will consider the scalar channel~\eqref{eq:scalar_channel} for $P_X = P_U$ or $P_X=P_V$.
We will be interested in values of the signal intensity $(\gamma_1, \gamma_2)$ that satisfy fixed some point equations. 

\begin{definition} \label{def:gamma1}
	We define the set $\Gamma(\lambda,\alpha)$ as
	\begin{equation} \label{eq:def_gamma}
		\Gamma(\lambda,\alpha) = \left\{
			(q_u,q_v) \in \R_+^2 \ \big| \
			q_u = F_{P_U}(\lambda \alpha q_v)  \ \text{and} \
			q_v = F_{P_V}(\lambda q_u)
		\right\} \,.
	\end{equation}
\end{definition}
A simple application of Brouwer's fixed point Theorem (see Proposition~\ref{prop:gamma_non_empty} in Appendix~\ref{sec:gamma_non_empty}) gives that $\Gamma(\lambda,\alpha) \neq \emptyset$.



\subsection{The Replica-Symmetric formula and its consequences}

The limit of $F_n$ is expressed using the following function
$$
\mathcal{F}:(\lambda,\alpha,q_u,q_v) \mapsto 
\psi_{P_U}(\lambda \alpha q_v) + \alpha \psi_{P_V}(\lambda q_u)
- \frac{\lambda \alpha}{2} q_u q_v \,.
$$
$\mathcal{F}$ corresponds to the free energy of the two scalar channels~\eqref{eq:scalar_channel} associated to $P_U$ and $P_V$, minus the term $\frac{\lambda \alpha}{2} q_u q_v$.
The Replica-Symmetric formula states that the free energy $F_n$ converges to the supremum of $\mathcal{F}$ over $\Gamma(\lambda,\alpha)$.

\begin{theorem}[Replica-Symmetric formula] \label{th:rs_formula}
	For all $\lambda,\alpha >0$,
	\begin{equation} \label{eq:lim_fn}
		F_n(\lambda) \xrightarrow[n \to \infty]{} \sup_{(q_u,q_v) \in \Gamma(\lambda,\alpha)} \mathcal{F}(\lambda,\alpha,q_u,q_v) = \sup_{q_v \geq 0} \inf_{q_u \geq 0} \mathcal{F}(\lambda,\alpha,q_u,q_v) \,.
\end{equation}
	Moreover, these extrema are achieved over the same couples $(q_u,q_v) \in \Gamma(\lambda,\alpha)$.
\end{theorem}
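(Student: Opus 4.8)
The plan is to prove the Replica-Symmetric formula by the standard two-sided strategy in this literature: obtain a lower bound on $\liminf F_n$ via an interpolation (Guerra-type) argument, and an upper bound on $\limsup F_n$ via Aizenman–Sims–Starr together with the overlap concentration result that the excerpt announces will be proved in Section~\ref{sec:overlap_concentration}. Throughout one works on the Nishimori line, so the Nishimori identity (Proposition~\ref{prop:nishimori}) and the identity \eqref{eq:f_n_p} for $F_n'(\lambda)$ are available, and the scalar-channel quantities $\psi_{P_U},\psi_{P_V},F_{P_U},F_{P_V}$ have the convexity/monotonicity properties collected in Lemma~\ref{lem:general_convex}.

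First I would set up the interpolation. For $t \in [0,1]$ and fixed parameters $q_1,q_2 \ge 0$, define an interpolating Hamiltonian $H_n(t)$ that at $t=1$ is the original bipartite model \eqref{eq:hamiltonian} and at $t=0$ decouples into two independent scalar channels \eqref{eq:scalar_channel}: the $\bbf{u}$-spins see signal intensity $\lambda q_2$ and the $\bbf{v}$-spins see $\lambda q_1$. Concretely, multiply the interaction term by $\sqrt{t}$ and add, at strength proportional to $(1-t)$, independent decoupled Gaussian "side observations" of $U_i$ and of $V_j$. Let $\phi_n(t)$ be the corresponding free energy; then $\phi_n(1)=F_n$ and $\phi_n(0)=\psi_{P_U}(\lambda q_2)+\psi_{P_V}(\lambda q_1)+o(1)$ up to the correct constants. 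Differentiating in $t$, using Gaussian integration by parts exactly as in the computation of \eqref{eq:f_n_p} and then the Nishimori identity to pass from $\langle \cdot\rangle^2$-type terms to planted terms, the cross terms organize into
$$
\phi_n'(t) = -\frac{\lambda}{2}\,\E\big\langle (\bbf{u}.\bbf{U}-q_1)(\bbf{v}.\bbf{V}-q_2)\big\rangle + (\text{choice of } q_1,q_2\text{-linear remainder}),
$$
so that choosing the side-channel intensities to match $q_1 = F_{P_V}(\lambda q_2)$, $q_2 = F_{P_U}(\lambda q_1)$ — i.e.\ $(q_1,q_2)\in\Gamma(\lambda)$ — makes the remainder collapse to exactly $-\tfrac{\lambda}{2}q_1q_2$ and leaves a sign-definite remainder term. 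Integrating from $0$ to $1$ gives $F_n \ge \mathcal{F}(\lambda,q_1,q_2) + (\text{sign-definite})$ for every $(q_1,q_2)\in\Gamma(\lambda)$, hence $\liminf_n F_n \ge \sup_{\Gamma(\lambda)}\mathcal{F}$. (For the $\sup\inf$ form one instead does a Guerra interpolation with a free $q_1$ and an optimized $q_2$, using concavity of $\gamma\mapsto\psi_{P_X}(\gamma)$ and its derivative $\psi_{P_X}'=\tfrac12 F_{P_X}$, which also yields the identity $\sup_{\Gamma(\lambda)}\mathcal{F}=\sup_{q_1}\inf_{q_2}\mathcal{F}$ and that the extremizers coincide.)

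For the matching upper bound I would use the Aizenman–Sims–Starr scheme: write $F_n = \tfrac1n\sum_{N}(N+1)F_{N+1}-NF_N + o(1)$ and analyze the free-energy increment when one passes from size $N$ to $N+1$, i.e.\ adding one new pair $(U_{N+1},V_{N+1})$ and a new row and column of observations. The increment is a difference of two "cavity" free energies, each of which, after Gaussian integration by parts and the Nishimori identity, is controlled by the law of the overlaps $(\bbf{u}.\bbf{U},\bbf{v}.\bbf{V})$ under the size-$N$ Gibbs measure. Here is where the overlap concentration theorem of Section~\ref{sec:overlap_concentration} enters decisively: it guarantees that $\bbf{u}.\bbf{U}$ concentrates around a deterministic $\bar q_1(\lambda)$ and $\bbf{v}.\bbf{V}$ around $\bar q_2(\lambda)$ (this is exactly where being on the Nishimori line / replica-symmetry is used — no RSB). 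Plugging the concentrated values into the increment formula yields $\limsup_n F_n \le \mathcal{F}(\lambda,\bar q_1,\bar q_2)$, and one checks from the stationarity of the cavity computation that $(\bar q_1,\bar q_2)\in\Gamma(\lambda)$, so $\limsup_n F_n \le \sup_{\Gamma(\lambda)}\mathcal{F}$. Combining the two bounds gives the theorem.

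The main obstacle is the upper bound, specifically making the overlap concentration quantitative enough and uniform enough in $\lambda$ to control the error terms in the Aizenman–Sims–Starr increment. The bipartite structure makes this more delicate than in the symmetric rank-one case treated in \cite{lelarge2016fundamental}: one must concentrate \emph{two} overlaps simultaneously and handle the product $(\bbf{u}.\bbf{U})(\bbf{v}.\bbf{V})$, and the side-observation perturbation used to force concentration must be shown not to change the limiting free energy (a small-perturbation / Fubini-in-$\lambda$ argument). A secondary technical point is that $\Gamma(\lambda)$ need not be a singleton; selecting the right fixed point requires the differentiability-of-the-limit argument — $F_n'(\lambda)=\tfrac12\E\langle(\bbf{u}.\bbf{U})(\bbf{v}.\bbf{V})\rangle$ converges, the limit $F(\lambda)$ is convex hence differentiable a.e., and at differentiability points $F'(\lambda)$ must equal $\tfrac12 q_1q_2$ for the extremizing pair, pinning down which $(q_1,q_2)\in\Gamma(\lambda)$ is selected for a.e.\ $\lambda$, and then a continuity/monotonicity argument extends it everywhere. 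Finally, the discrete-prior, bounded-support hypotheses keep all Gaussian-concentration estimates clean; the extension to general square-integrable priors is deferred, as stated, to Section~\ref{sec:multidim}.
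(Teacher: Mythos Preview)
Your overall architecture---Guerra interpolation for the lower bound, Aizenman--Sims--Starr plus overlap concentration for the upper bound---matches the paper's, but there is a genuine gap in your lower bound.

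You assert that after choosing $(q_1,q_2)\in\Gamma(\lambda)$ the interpolation derivative leaves ``a sign-definite remainder term'' of the form $\E\langle(\bbf{u}.\bbf{U}-q_1)(\bbf{v}.\bbf{V}-q_2)\rangle$. This is exactly the step that fails in the bipartite model and is the main new difficulty compared to the symmetric case of \cite{lelarge2016fundamental}. In the symmetric problem the remainder is a square, $\E\langle(\bbf{x}.\bbf{X}-q)^2\rangle\ge 0$; here it is a \emph{product} of two different fluctuations and has no a~priori sign. The paper flags this explicitly in Section~\ref{sec:proof_techniques}: the covariance of the bipartite Hamiltonian is not a convex function of a single overlap, so Guerra's scheme does not close by itself.

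The paper's fix is the content of Section~\ref{sec:tap}: a separate cavity computation (carried out in the interpolated model, which is why Section~\ref{sec:appli} includes the extra side channels $q_u,q_v$) shows that the overlaps concentrate and that their means satisfy the state-evolution relations of Corollary~\ref{cor:fixed_point}, namely
\[
\E\langle\bbf{u}^{(1)}.\bbf{u}^{(2)}\rangle_t \;\approx\; F_{P_U}\big(\lambda t\,\E\langle\bbf{v}^{(1)}.\bbf{v}^{(2)}\rangle_t+(1-t)\lambda q_1\big).
\]
Only with this in hand can the remainder in $\phi'(t)$ be rewritten (after concentration) as
\[
\Big(F_{P_U}\big(\lambda t\,\bar q_v+(1-t)\lambda q_1\big)-F_{P_U}(\lambda q_1)\Big)\,(\bar q_v-q_1)\ \ge\ 0,
\]
non-negative by monotonicity of $F_{P_U}$. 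Thus the overlap concentration \emph{and} the TAP equations are essential inputs to the \emph{lower} bound, not only to the upper bound; your proposal invokes them only for the latter. Without Section~\ref{sec:tap}, the interpolation step as you describe it does not go through. (A smaller point: $\psi_{P_X}$ is convex, not concave; and the $\sup\inf=\sup_{\Gamma}$ identity is obtained in the paper by a direct variational argument, Proposition~\ref{prop:min_max}, not via interpolation.)
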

Theorem~\ref{th:rs_formula} is (with Theorem~\ref{th:rs_formula_multidim} that generalizes the result to any multidimensional input distribution) the main result of this paper and is proved in Section~\ref{sec:proof_rs}. This proves a conjecture from~\cite{DBLP:conf/allerton/LesieurKZ15}, in particular $\mathcal{F}$ corresponds to the ``Bethe free energy'' (\cite{DBLP:conf/allerton/LesieurKZ15}, Equation 47). The Replica-Symmetric formula allows to compute the limit of the mutual information for the inference channel~\eqref{eq:model}.

\begin{corollary}[Limit of the mutual information]\label{cor:limit_i}
	$$
	\frac{1}{n} I((\bbf{U},\bbf{V});\bbf{Y}) \xrightarrow[n \to \infty]{} \frac{\alpha \lambda}{2} \E[U^2] \E[V^2] - \sup_{(q_u,q_v) \in \Gamma(\lambda,\alpha)} \mathcal{F}(\lambda,\alpha,q_u,q_v) \,.
	$$
\end{corollary}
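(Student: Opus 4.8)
The plan is to reduce the corollary to Theorem~\ref{th:rs_formula} via the exact finite-$n$ identity
\[
\tfrac1n I((\bbf{U},\bbf{V});\bbf{Y}) = \tfrac{\lambda}{2}\,\E[U^2]\,\E[V^2] - F_n .
\]
Once this is established, the corollary follows immediately by letting $n\to\infty$ and using $F_n \to \sup_{(q_1,q_2)\in\Gamma(\lambda)}\mathcal{F}(\lambda,q_1,q_2)$ from Theorem~\ref{th:rs_formula}.

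To prove the identity I would decompose $I((\bbf{U},\bbf{V});\bbf{Y}) = h(\bbf{Y}) - h(\bbf{Y}\mid\bbf{U},\bbf{V})$, where $h$ denotes differential entropy. Conditionally on $(\bbf{U},\bbf{V})$ the entries $Y_{i,j}$ are independent $\mathcal{N}(\sqrt{\lambda/n}\,U_iV_j,1)$, so $h(\bbf{Y}\mid\bbf{U},\bbf{V}) = \tfrac{n^2}{2}\log(2\pi e)$ is a deterministic constant. For the first term, writing the marginal density of $\bbf{Y}$ by integrating $(\bbf{u},\bbf{v})$ against $P_0$, expanding the Gaussian exponent and factoring out $\exp(-\tfrac12\sum_{i,j}Y_{i,j}^2)$ gives $p(\bbf{Y}) = (2\pi)^{-n^2/2}\exp(-\tfrac12\sum_{i,j}Y_{i,j}^2)\,Z_n$ with $Z_n$ the partition function of \eqref{eq:posterior} (this is just the definition of the posterior rewritten), hence $h(\bbf{Y}) = \tfrac{n^2}{2}\log(2\pi) + \tfrac12\E\!\sum_{i,j}Y_{i,j}^2 - \E\log Z_n$.

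It then remains to compute $\E\sum_{i,j}Y_{i,j}^2$: expanding $Y_{i,j}^2 = \tfrac{\lambda}{n}U_i^2V_j^2 + 2\sqrt{\lambda/n}\,U_iV_jZ_{i,j} + Z_{i,j}^2$ and taking expectations, the cross term vanishes since $\E Z_{i,j}=0$ and $Z_{i,j}$ is independent of $(\bbf{U},\bbf{V})$, the last term contributes $n^2$, and the first contributes $\lambda n\,\E[U^2]\,\E[V^2]$ using $P_0 = P_U\otimes P_V$ and the i.i.d.\ structure. Combining the three displays yields $I((\bbf{U},\bbf{V});\bbf{Y}) = \tfrac{\lambda n}{2}\E[U^2]\E[V^2] - \E\log Z_n = \tfrac{\lambda n}{2}\E[U^2]\E[V^2] - n F_n$, which is the claimed identity; dividing by $n$ and passing to the limit finishes the proof.

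There is essentially no obstacle: the analytic content is entirely in Theorem~\ref{th:rs_formula}, and the only points needing a line of care are the bookkeeping of the Gaussian normalization constants and the interchange of expectation with the sum over the $n^2$ entries (legitimate since $S$ is bounded, so each $Y_{i,j}^2$ is integrable). One may also note that this identity is the rigorous version of the ``I-MMSE'' remark following \eqref{eq:f_n_p}: differentiating it in $\lambda$ and comparing with \eqref{eq:f_n_p} recovers the link between $F_n'(\lambda)$ and the matrix MMSE used in Corollary~\ref{cor:limits_lambda_mmse}.
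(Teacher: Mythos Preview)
Your proof is correct. You use the decomposition $I((\bbf{U},\bbf{V});\bbf{Y}) = h(\bbf{Y}) - h(\bbf{Y}\mid\bbf{U},\bbf{V})$ via differential entropy of the observation, whereas the paper uses the dual decomposition $I = H(\bbf{U},\bbf{V}) - H(\bbf{U},\bbf{V}\mid\bbf{Y})$ via discrete entropy of the signal, obtaining $\frac{1}{n}I = \frac{1}{n}\E[H_n(\bbf{U},\bbf{V})] - F_n$ from the posterior form \eqref{eq:posterior} and then computing $\frac{1}{n}\E[H_n(\bbf{U},\bbf{V})] = \frac{\lambda}{2n}\E[U^2V^2] + \frac{\lambda(n-1)}{2n}\E[U^2]\E[V^2]$. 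Since $P_0 = P_U\otimes P_V$ one has $\E[U^2V^2]=\E[U^2]\E[V^2]$, so the paper's expression reduces to your exact identity $\frac{1}{n}I = \frac{\lambda}{2}\E[U^2]\E[V^2] - F_n$; the two routes are equivalent and equally short. Your approach has the small advantage that $h(\bbf{Y}\mid\bbf{U},\bbf{V})$ is an immediate constant, while the paper's approach stays closer to the Hamiltonian language used throughout and makes the connection \eqref{eq:mutual_information} explicit.
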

\begin{proof}
	The joint distribution $P_{(\bbf{U},\bbf{V};\bbf{Y})}$ is absolutely continuous with respect to the product $P_{(\bbf{U},\bbf{V})} \otimes P_{\bbf{Y}}$ with Radon-Nikodym derivative:
	$$
	\frac{dP_{(\bbf{U},\bbf{V};\bbf{Y})}}{dP_{(\bbf{U},\bbf{V})} \!\otimes\! P_{\bbf{Y}}}
	(\bbf{U},\bbf{V};\bbf{Y})
	=
	\frac{\exp\Big(-\frac{1}{2}\|\bbf{Y} - \sqrt{\frac{\lambda}{n}} \bbf{U}^{\intercal} \bbf{V}\|^2\Big)}%
	{\int dP_U^{\otimes n}(\bbf{u}) dP_V^{\otimes m}(\bbf{v}) \exp\Big(-\frac{1}{2}\|\bbf{Y} - \sqrt{\frac{\lambda}{n}} \bbf{u}^{\intercal} \bbf{v}\|^2\Big)} \,.
	$$
	Therefore the mutual information is equal to
	$$
	I\big(\bbf{U},\bbf{V};\bbf{Y}\big)
	=
	\E \log \Big(
	\frac{dP_{(\bbf{U},\bbf{V};\bbf{Y})}}{dP_{(\bbf{U},\bbf{V})} \!\otimes\! P_{\bbf{Y}}}
(\bbf{U},\bbf{V};\bbf{Y}) \Big)
= - nF_n(\lambda) + \frac{\lambda m}{2} \E_{P_U}[U^2]\E_{P_V}[V^2]\,.
	$$
\end{proof}
\\

Theorem~\ref{th:rs_formula} allows also to compute the limit of the $\MMSE$:

\begin{proposition}[Limit of the $\MMSE$] \label{prop:csq}
	Let 
	$$
	D_{\alpha} = \Big\{ \lambda > 0 \ \Big| \ \mathcal{F}(\lambda,\alpha,\cdot,\cdot) \ \text{has a unique maximizer $(q_u^*(\lambda),q_v^*(\lambda))$ on} \ \Gamma(\lambda,\alpha) \Big\}.
	$$
	Then $D_{\alpha}$ is equal to $(0,+\infty)$ minus a countable set and for all $\lambda \in D_{\alpha}$ (and thus almost every $\lambda>0$)
	\begin{align}
		&\MMSE_n(\lambda) \xrightarrow[n \to \infty]{} \E[U^2] \E[V^2] - q_u^*(\lambda) q_v^*(\lambda)\,. \label{eq:lim_mmse}
	\end{align}
\end{proposition}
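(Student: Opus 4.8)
The plan is to reduce the proposition to the convergence of the derivative $F_n'(\lambda)$. First I would establish the exact identity, valid for every $n$ and every $\lambda>0$,
$$
\MMSE_n(\lambda) = \E[U^2]\,\E[V^2] - \E\big\langle (\bbf{u}.\bbf{U})(\bbf{v}.\bbf{V})\big\rangle = \E[U^2]\,\E[V^2] - 2F_n'(\lambda).
$$
For the first equality, expand the square defining $\MMSE_n$ and use $\E[U_iV_j\mid\bbf{Y}]=\langle u_iv_j\rangle$ to write $\MMSE_n(\lambda)=\frac{1}{n^2}\sum_{i,j}\big(\E[U_i^2V_j^2]-\E\langle u_iv_j\rangle^2\big)$; the first sum equals $\E[U^2]\E[V^2]$ because $U_i$ and $V_j$ are independent, and writing $\E\langle u_iv_j\rangle^2=\E\langle u_i^{(1)}v_j^{(1)}u_i^{(2)}v_j^{(2)}\rangle$ for two independent replicas and applying the Nishimori identity (Proposition~\ref{prop:nishimori}) turns it into $\E\langle u_iv_jU_iV_j\rangle$, whose average over $i,j$ is $\E\langle(\bbf{u}.\bbf{U})(\bbf{v}.\bbf{V})\rangle$. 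The second equality is precisely~\eqref{eq:f_n_p}. Hence the two convergences asserted in the proposition amount to the single statement $F_n'(\lambda)\to\frac{1}{2} q_1^*(\lambda)q_2^*(\lambda)$.

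Next I would use convexity. Since the Bayes-optimal error of a Gaussian channel is non-increasing in the signal-to-noise ratio — the observation at a smaller $\lambda$ is obtained from the one at a larger $\lambda$ by rescaling and adding independent Gaussian noise — the map $\lambda\mapsto\MMSE_n(\lambda)$ is non-increasing, so by the identity above $F_n'$ is non-decreasing and each $F_n$ is convex on $(0,\infty)$; the same then holds for the pointwise limit $F(\lambda):=\sup_{(q_1,q_2)\in\Gamma(\lambda)}\mathcal{F}(\lambda,q_1,q_2)=\sup_{q_1\ge0}\inf_{q_2\ge0}\mathcal{F}(\lambda,q_1,q_2)$ given by Theorem~\ref{th:rs_formula}. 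By the classical fact that a pointwise-convergent sequence of convex functions converges together with its derivatives at every point where the limit is differentiable, $F_n'(\lambda)\to F'(\lambda)$ for all $\lambda$ outside the countable set of non-differentiability points of $F$.

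It remains to identify $F'$ and to match these differentiability points with the set $D$. Using $\psi_{P_X}'(\gamma)=\frac{1}{2} F_{P_X}(\gamma)$ (Section~\ref{sec:scalar_channel_proofs}), one computes $\partial_\lambda\mathcal{F}(\lambda,q_1,q_2)=\frac{1}{2} q_1F_{P_U}(\lambda q_1)+\frac{1}{2} q_2F_{P_V}(\lambda q_2)-\frac{1}{2} q_1q_2$, which on $\Gamma(\lambda)$ collapses, via the fixed-point relations $q_2=F_{P_U}(\lambda q_1)$ and $q_1=F_{P_V}(\lambda q_2)$, to $\frac{1}{2} q_1q_2$. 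In the representation $F(\lambda)=\sup_{q_1\ge0}\inf_{q_2\ge0}\mathcal{F}(\lambda,q_1,q_2)$ the optimization domains are fixed and $\mathcal{F}$ is smooth in $\lambda$, so an envelope (Danskin-type) argument gives, at each differentiability point of $F$, $F'(\lambda)=\partial_\lambda\mathcal{F}(\lambda,q_1,q_2)=\frac{1}{2} q_1q_2$ for every maximizing couple $(q_1,q_2)\in\Gamma(\lambda)$; in particular all maximizers share the same product $q_1q_2=2F'(\lambda)$. But every point of $\Gamma(\lambda)$ lies on the curve $q_2=F_{P_U}(\lambda q_1)$, along which $q_1\mapsto q_1F_{P_U}(\lambda q_1)$ is strictly increasing ($F_{P_U}$ being non-decreasing by Lemma~\ref{lem:general_convex} and, by the Nishimori identity together with Jensen's inequality, strictly positive on $(0,\infty)$), so equal products force $q_1$, and hence $q_2$, to coincide: the maximizer is unique, i.e.\ $\lambda\in D$. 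Conversely, uniqueness of the maximizer yields differentiability of $F$ by Danskin's theorem, so $D$ is exactly the set of differentiability points of the convex function $F$, hence $(0,\infty)$ minus a countable set. For $\lambda\in D$ this gives $\E\langle(\bbf{u}.\bbf{U})(\bbf{v}.\bbf{V})\rangle=2F_n'(\lambda)\to q_1^*(\lambda)q_2^*(\lambda)$ and $\MMSE_n(\lambda)\to\E[U^2]\E[V^2]-q_1^*(\lambda)q_2^*(\lambda)$.

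I expect this last step to be the delicate one. The functional $\mathcal{F}$ is convex in $q_2$ but \emph{not} concave in $q_1$, so the envelope/Danskin interchange cannot be taken off the shelf; it must be justified using Theorem~\ref{th:rs_formula}, which ensures both that $\sup_{(q_1,q_2)\in\Gamma(\lambda)}\mathcal{F}=\sup_{q_1\ge0}\inf_{q_2\ge0}\mathcal{F}$ and that these extrema are attained at the same couples of $\Gamma(\lambda)$. Moreover, the strict-monotonicity argument that pins down uniqueness of the maximizer needs a separate treatment of the degenerate fixed point $(0,0)$, which belongs to $\Gamma(\lambda)$ when $\E U=\E V=0$.
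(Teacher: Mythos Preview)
Your proposal is correct and follows essentially the same route as the paper: relate $\MMSE_n$ to $F_n'$ via the Nishimori identity, deduce convexity of $F_n$ from monotonicity of the MMSE in $\lambda$, pass to the limit using the standard derivative-convergence lemma for convex functions, and identify $\Phi'(\lambda)$ via an envelope theorem.

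The one place where the paper is slicker is exactly the step you flag as delicate. Instead of applying a Danskin-type argument to the full sup--inf, the paper reparametrizes the inner minimization: setting $\gamma_2=\lambda q_2$ turns $\inf_{q_2\ge0}\{\psi_{P_V}(\lambda q_2)-\tfrac{\lambda}{2}q_1q_2\}$ into $\inf_{\gamma_2\ge0}\{\psi_{P_V}(\gamma_2)-\tfrac{1}{2}q_1\gamma_2\}$, which does \emph{not} depend on $\lambda$. Hence $\Phi(\lambda)=\sup_{q_1\ge0}\{\psi_{P_U}(\lambda q_1)+h(q_1)\}$ with $h$ independent of $\lambda$, and the Milgrom--Segal envelope theorem applies directly to the outer supremum (over a compact range of $q_1$, no concavity required): $\Phi$ is differentiable at $\lambda$ iff $\{\tfrac{1}{2}q_1F_{P_U}(\lambda q_1):q_1\text{ maximizer}\}$ is a singleton, in which case $\Phi'(\lambda)=\tfrac{1}{2}q_1^*F_{P_U}(\lambda q_1^*)=\tfrac{1}{2}q_1^*q_2^*$. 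Combined with your strict-monotonicity observation on $q_1\mapsto q_1F_{P_U}(\lambda q_1)$, this gives the equivalence with $\lambda\in D$ cleanly, without invoking any sup--inf exchange and without a separate treatment of the degenerate point $(0,0)$.
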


Again, this was conjectured in~\cite{DBLP:conf/allerton/LesieurKZ15}: the performance of the Bayes-optimal estimator (i.e.\ the MMSE) corresponds to the fixed point of the state-evolution equations~\eqref{eq:def_gamma} which has the greatest Bethe free energy $\mathcal{F}$.
Before proving Proposition~\ref{prop:csq}, let us deduce the information-theoretic threshold for our matrix estimation problem.
Let us define 
\begin{equation} \label{eq:lambda_c}
	\lambda_c(\alpha) = \sup \left\{\lambda \in D_{\alpha} \ \middle| \ q_u^*(\lambda) q_v^*(\lambda) = (\E U)^2 (\E V)^2 \right\}.
\end{equation}
If the set of the left-hand side is empty, one define $\lambda_c(\alpha) = 0$.
Proposition~\ref{prop:csq} gives that $\lambda_c(\alpha)$ is the information-theoretic threshold for the estimation of $\bbf{U} \bbf{V}^{\intercal}$ given $\bbf{Y}$:
\begin{itemize}
	\item[--] If $\lambda < \lambda_c(\alpha)$, then $\MMSE_n(\lambda) \xrightarrow[n \to \infty]{} \DMSE$. It is not possible to reconstruct the signal $\bbf{U} \bbf{V}^{\intercal}$ better than a ``dummy'' estimator.
	\item[--] If $\lambda > \lambda_c(\alpha)$, then $\lim\limits_{n \to \infty} \MMSE_n(\lambda) < \DMSE$. It is possible to reconstruct the signal $\bbf{U} \bbf{V}^{\intercal}$ better than a ``dummy'' estimator.
\end{itemize}

\begin{proof}[of Proposition~\ref{prop:csq}]
	Let $\lambda > 0$.
	Compute, using the Nishimori identity (Proposition~\ref{prop:nishimori}),
	\begin{align*}
		\MMSE_n(\lambda) &= \frac{1}{n m} \E\Big[ \sum_{i,j} (U_i V_j - \langle u_i v_j \rangle_n)^2 \Big]
		= \frac{1}{n m} \E\Big[ \sum_{i,j} (U_i V_j)^2 + \langle u_i v_j \rangle_n^2 - 2 \langle u_i U_i v_j V_j \rangle_n \Big]
		\\
		&= \E[U^2] \E[V^2] - \E \Big\langle (\bbf{u}.\bbf{U})(\bbf{v}.\bbf{V}) \Big\rangle_n
		= \E[U^2] \E[V^2] - \frac{2n}{m} F_n'(\lambda) \,,
	\end{align*}
	where we used Equation~\eqref{eq:f_n_p} in the last equality.
	$\MMSE_n$ is a non-increasing function of the signal-to-noise ratio $\lambda$. Consequently, $F_n'$ is non-decreasing: $\lambda \mapsto F_n(\lambda)$ is convex. 
	Define the function 
	\begin{equation} \label{eq:Phi_ch}
		\Phi_{\alpha}: \lambda \mapsto \lim_{n \to \infty} F_n(\lambda) = \sup_{q_v \geq 0}\left\{ \psi_{P_U}(\lambda \alpha q_v) + \inf_{q_u\geq 0} \Big\{  \alpha \psi_{P_V}(\lambda q_u) - \frac{\lambda \alpha q_u q_v}{2} \Big\} \right\} \,.
	\end{equation}
	The value of the infimum over $q_u$ does not depend on $\lambda$ and $\lambda \mapsto \psi_{P_U}(\lambda \alpha q_v)$ is differentiable with derivative equal to $\frac{\alpha q_v}{2} F_{P_U}(\lambda \alpha q_v)$. 
	The supremum over $q_v$ is achieved over a compact set (by Theorem~\ref{th:rs_formula}, because $\Gamma(\lambda,\alpha)$ is compact), thus an envelope theorem (Corollary~4 from~\cite{milgrom2002envelope}) gives that $\Phi_{\alpha}$ is differentiable at $\lambda>0$
	if and only if 
	$$
	\left\{ \frac{\alpha q_v}{2} F_{P_U}(\lambda \alpha q_v) \ \middle| \ q_v \ \text{maximizes the right-hand side of~\eqref{eq:Phi_ch}} \right\} 
	$$
	is a singleton. By strict monotonicity of $F_{P_U}$ (see Lemma~\ref{lem:general_convex}), one see that $\Phi_{\alpha}$ is differentiable at $\lambda$ if and only if there is only one couple $(q_u,q_v) \in \Gamma(\lambda,\alpha)$ that achieves the extrema in~\eqref{eq:lim_fn}. Therefore, the set of point at which $\Phi_{\alpha}$ is differentiable is exactly $D_{\alpha}$ and for all $\lambda \in D_{\alpha}$:
	\begin{equation} \label{eq:der_Phi}
		\Phi_{\alpha}'(\lambda) = \frac{\alpha q_v^*(\lambda) F_{P_U}(\lambda \alpha q_v^*)}{2} = \frac{ \alpha q_u^*(\lambda) q_v^*(\lambda)}{2} \,.
	\end{equation}
	$\Phi_{\alpha}$ is convex (as a limit of convex functions) and is thus differentiable everywhere except a countable set. This proves the first assertion. By definition, $F_n(\lambda) \to \Phi_{\alpha}(\lambda)$ for all $\lambda >0$.
	By convexity of $F_n$ and $\Phi_{\alpha}$, a standard analysis lemma gives that for all $\lambda \in D_{\alpha}$, $F_n'(\lambda) \xrightarrow[n \to \infty]{} \Phi_{\alpha}'(\lambda)$. The lemma follows.
\end{proof}

\subsection{Numerical experiments}\label{sec:num}

In this section we illustrate our results with numerical experiment: we compute the MMSE for different priors and noise levels. For simplicity, we will considers priors for $U$ and $V$ with zero mean, unit variance and with 2 values in their support:
$$
U \sim p_u \delta\left(\sqrt{\frac{1-p_u}{p_u}}\right)
+ (1-p_u) \delta\left(-\sqrt{\frac{p_u}{1-p_u}}\right)
\quad
\text{and}
\quad
V \sim p_v \delta\left(\sqrt{\frac{1-p_v}{p_v}}\right)
+ (1-p_v) \delta\left(-\sqrt{\frac{p_v}{1-p_v}}\right) \,,
$$
where $0<p_u,p_v<1$ characterize the asymmetry of the priors.
We will first compare the performance of PCA with the MMSE. The results of~\cite{benaych2012singular} mentioned in the introduction give:
$$
\MSE^{\text{PCA}}_n \xrightarrow[n \to \infty]{} 
\begin{cases}
	1 & \text{if} \ \lambda \leq 1 \,,\\
	\frac{1}{\lambda}(1- \frac{1}{\lambda}) & \text{otherwise.}
\end{cases}
$$
\begin{figure}[!h]
	\centering
	\subfloat[Symmetric priors: $p_u=p_v=1/2$]{\includegraphics[width = 0.47\textwidth]{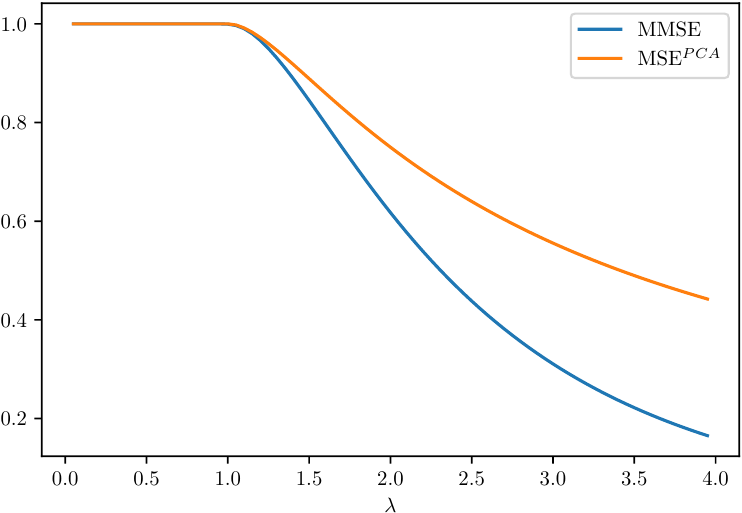}}		
	\quad
	\subfloat[Asymmetric priors: $p_u=p_v=0.1$]{\includegraphics[width = 0.47\textwidth]{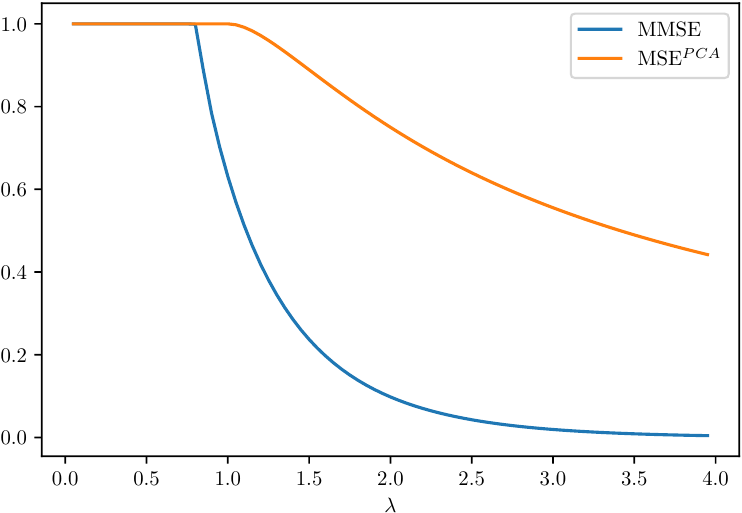}}		
	\caption{Sub-optimality of PCA for symmetric and asymmetric priors.}
	\label{fig:mmse_pca}
\end{figure}
For the symmetric case ($p_u = p_v = 1/2$), we see that PCA achieves a non-trivial performance as soon it is information-theoretically possible to estimate the signal ($\lambda > 1$). It is however sub-optimal. In the asymmetric case, the information-theoretic threshold $\lambda_c$ is strictly below $1$. Thus, for $\lambda_c < \lambda < 1$, it is theoretically possible to achieve a non trivial performance but PCA fails. It is conjectured that any polynomial-time algorithm would fail in this regime (see for instance~\cite{lesieur2017constrained}).




\subsection{Algorithmic interpretation: Approximate Message Passing (AMP)} \label{sec:amp}

Approximate Message Passing (AMP) algorithms, introduced in~\cite{donoho2009message}, have been widely used to study the matrix factorization problem~\eqref{eq:model}. They have been used in ~\cite{rangan2012iterative} for the rank-one case and then in~\cite{matsushita2013low} for finite-rank matrix estimation. 
For detailed review and developments about the study of matrix factorization with message-passing algorithms, see~\cite{lesieur2017constrained}.

We introduce briefly the AMP algorithm and comment its connections with the results of the previous sections. More details about the algorithm and numerical experiments can be found in~\cite{deshpande2014information} and~\cite{lesieur2016uv}.
We would not give any proof about AMP, but the results presented here can be deduced from~\cite{javanmard2013state} and the previously mentioned articles.

The scalar channel presented in Section~\ref{sec:scalar_channel} holds a key role in the AMP algorithm. Suppose that we observed $Y_u$ and $Y_v$ that are noisy observation of respectively $U$ and $V$ through the scalar channel~\eqref{eq:scalar_channel} with signal intensities $\gamma_u$ and $\gamma_v$. The best predictions that we can make (in term of mean squared error) for $U$ and $V$ are respectively 
\begin{align}
	f_u(Y_u,\gamma_u) &=  \E[U | Y_u]  \,, \\
	f_v(Y_v,\gamma_v) &=  \E[V | Y_v] \,.
\end{align}
The performance of these estimators is measured by $F_{P_U}$ and $F_{P_V}$. Indeed $F_{P_U}(\gamma_u) = \E[U f_u(Y_u,\gamma_u)]$ and $F_{P_V}(\gamma_v)=\E[V f_v(Y_v,\gamma_v)]$ measure the correlations between the estimators $f_u$ and $f_v$ and the true values $U$ and $V$. Define $(q_u^0, q_v^0) = (0,0)$ and $(q_u^t,q_v^t)_{t \geq 1}$ through the following recursion, called ``state evolution'':
\begin{equation} \label{eq:state_evolution}
	\begin{cases}
		q_u^{t+1} &= F_{P_U}(\lambda \alpha q_v^t) \,,\\
		q_v^{t+1} &= F_{P_V}(\lambda q_u^t)\,.
	\end{cases}
\end{equation}

The AMP algorithm initializes two estimates of $\bbf{U}$ and $\bbf{V}$ by setting $(\hat{u}^0_i)_{1 \leq i \leq n} \iid P_U$, $(\hat{v}^0_j)_{1 \leq j \leq m} \iid P_V$, and follows the recursion
\begin{align*}
	\left\{
	\begin{array}{lcl}
		\bbf{u}^{t+1} &=& \bbf{Y} \bbf{\hat{v}}^t - \delta_{u}^t \bbf{\hat{u}}^t
		\\
		\bbf{v}^{t+1} &=& \bbf{Y}^{\intercal} \bbf{\hat{u}}^t - \delta_{v}^t \bbf{\hat{v}}^t
	\end{array}
	\right.
	\qquad
	\left\{
	\begin{array}{ccc}
		\bbf{\hat{u}}^{t+1} &=& f_u(\bbf{u}^{t+1},\lambda \alpha q_v^t)
		\\
		\bbf{\hat{v}}^{t+1} &=& f_v(\bbf{v}^{t+1}, \lambda q_u^t)
	\end{array}
	\right.
\end{align*}
where we extend the functions $f_u$ and $f_v$ to vector inputs by applying them coordinate by coordinate. The scalars $\delta_{u}^t$ and $\delta_{v}^t$ are linked to the partial derivatives of the functions $f_u(\cdot, \lambda \alpha q_v^t)$ and $f_v(\cdot,\lambda q_u^t)$:
\begin{align*}
	\delta_{u}^t &= \frac{1}{n} \sum_{i=1}^n \frac{\partial f_u}{\partial u} (\hat{u}_i, \lambda \alpha q_v^t)
	, \qquad 
	\delta_{v}^t = \frac{1}{n} \sum_{j=1}^m \frac{\partial f_v}{\partial v}(\hat{v}_j,\lambda q_u^t) \,.
\end{align*}
After $t$ iterations, the algorithm outputs $\bbf{\hat{u}}^t (\bbf{\hat{v}}^t)^{\intercal}$.
The AMP algorithm is particularly interesting because its evolution can be rigorously tracked (see~\cite{javanmard2013state}). For $t \geq 1$, we have almost-surely
$$
\bbf{U}.\bbf{\hat{u}}^t \xrightarrow[n \to \infty]{} q_u^t
, \quad
\bbf{V}.\bbf{\hat{v}}^t \xrightarrow[n \to \infty]{} q_v^t
\quad \text{and} \quad 
\MSE(\bbf{\hat{u}}^t (\bbf{\hat{v}}^t)^{\intercal}) \xrightarrow[n \to \infty]{} \E[U^2] \E[V^2] - q_u^t q_v^t \,.
$$
The state evolution~\eqref{eq:state_evolution} characterizes therefore the behavior of the AMP algorithm. We see that if $(q_u^t,q_v^t)_{t \geq 0}$ converges to the fixed point $(q_u^*,q_v^*) \in \Gamma(\lambda,\alpha)$ that maximizes $\cF(\lambda,\alpha,\cdot,\cdot)$, then the AMP algorithm is an optimal, polynomial-time algorithm. 
The AMP algorithm is conjectured to be the most efficient polynomial-time algorithm, even in the regime where it does not converges to the optimal fixed point.

\subsection{Extension to rank-$k$ matrix estimation} \label{sec:multidim}

We extend in this section the main results of Section~\ref{sec:rs_formula} multidimensional input distributions.
\\

Let $k \geq 1$. Let $P_U$ and $P_V$ be two probability distributions on $\R^k$ with finite second moment and such that $\Cov_{P_U}(\bbf{U})$ and $\Cov_{P_V}(\bbf{V})$ are inversible.
Consider i.i.d.\ random variables $(\bbf{U}_i)_{1 \leq i \leq n} \iid P_U$ and $(\bbf{V}_j)_{1 \leq j \leq m} \iid P_V$. 
We will study the regime where $n,m \to \infty$ and $m/n \to \alpha >0$.
We observe
\begin{equation} \label{eq:model_multidim}
	Y_{i,j} = \sqrt{\frac{\lambda}{n}} \bbf{U}_i^{\intercal} \bbf{V}_j + Z_{i,j}
	\quad \text{for} \  1 \leq i \leq n , \ 1 \leq j \leq m \,,
\end{equation}
where $Z_{i,j}$ are i.i.d.\ standard normal random variables. 
Similarly to Section~\ref{sec:rs_formula} we define the minimal mean squared error for the estimation of the matrix $\bbf{U} \bbf{V}^{\intercal}$ given the observation of the matrix $\bbf{Y}$:
\begin{align*}
	\MMSE_n(\lambda) 
	&= \min_{\hat{\theta}} \left\{ \frac{1}{n m} \sum_{\substack{1 \leq i \leq n \\1 \leq j \leq m}} \E\left[ \left(\bbf{U}_i^{\intercal} \bbf{V}_j - \hat{\theta}_{i,j}(\bbf{Y}) \right)^2\right] \right\} \label{eq:def_mmse_min_intro}
	=\frac{1}{nm} \sum_{\substack{1 \leq i \leq n \\1 \leq j \leq m}} \E \left[ \left(\bbf{U}_i^{\intercal} \bbf{V}_j - \E\left[\bbf{U}_i^{\intercal} \bbf{V}_j |\bbf{Y} \right]\right)^2\right],
\end{align*}
where the minimum is taken over all estimators $\hat{\theta}$ (i.e.\ measurable functions of the observations $\bbf{Y}$).
Define the Hamiltonian 
\begin{equation} \label{eq:hamiltonian_multidim}
	H_n(\bbf{u},\bbf{v}) = \sum_{\substack{1 \leq i \leq n \\1 \leq j \leq m}} \sqrt{\frac{\lambda}{n}} \bbf{u}_i^{\intercal} \bbf{v}_j Z_{i,j} + \frac{\lambda}{n} \bbf{u}_i^{\intercal} \bbf{v}_j \bbf{U}_i^{\intercal} \bbf{V}_j - \frac{\lambda}{2n} (\bbf{u}_i^{\intercal} \bbf{v}_j)^2
	, \quad \text{for } (\bbf{u},\bbf{v}) \in (\R^k)^n \times (\R^k)^m.
\end{equation}
The free energy is then
$$
F_n 
= \frac{1}{n} \E \log \left( \int dP_U^{\otimes n}(\bbf{u}) dP_V^{\otimes m}(\bbf{v}) e^{H_n(\bbf{u},\bbf{v})} \right) \,.
$$
We can generalize the definition of the functions $F_{P_U}$ and $F_{P_V}$ in Section~\ref{sec:scalar_channel} to the multidimensional case, and define (see Appendix~\ref{sec:scalar_channel_proofs}) for $P_X = P_U, P_V$: 
$$
F_{P_X} : \bbf{q} \in S_k^+
\mapsto
\E \left[
	\frac{\int \bbf{X}\bbf{x}^{\intercal}  \, e^{\bbf{x}^{\intercal} \bbf{q}^{1/2} \bbf{Z} + \bbf{x}^{\intercal} \bbf{q} \bbf{X} - \frac{1}{2} \bbf{x}^{\intercal} \bbf{q} \bbf{x}} d P_X(\bbf{x})}
{\int  e^{\bbf{x}^{\intercal} \bbf{q}^{1/2} \bbf{Z} + \bbf{x}^{\intercal} \bbf{q} \bbf{X} - \frac{1}{2} \bbf{x}^{\intercal} \bbf{q} \bbf{x}} d P_X(\bbf{x})}
	\right] \,,
$$
where the expectation is taken with respect $(\bbf{X},\bbf{Z} )\sim P_X \otimes \cN(\bbf{0},\bbf{I}_k)$ and
$S_k^+$ is the set of $k \times k$ positive semidefinite matrices. We define also
$$
\psi_{P_X} (\bbf{q}) = 
\E \log \int  e^{\bbf{x}^{\intercal} \bbf{q}^{1/2} \bbf{Z} + \bbf{x}^{\intercal} \bbf{q} \bbf{X} - \frac{1}{2} \bbf{x}^{\intercal} \bbf{q} \bbf{x}} d P_X(\bbf{x}) \,.
$$
The main properties of the functions $\psi_{P_X}$ and $F_{P_X}$ are presented in Appendix~\ref{sec:scalar_channel_proofs}.
\begin{definition}\label{def:gammak}
	We define the set $\Gamma(\lambda,\alpha)$ as
	\begin{equation} \label{eq:def_gamma_multidim}
		\Gamma(\lambda,\alpha) = \left\{
			(\bbf{q}_u,\bbf{q}_v) \in (S_k^+)^2 \ \big| \
			\bbf{q}_u = F_{P_U}(\lambda \alpha \bbf{q}_v) \text{ and }
			\bbf{q}_v = F_{P_V}(\lambda \bbf{q}_u)
		\right\} \,.
	\end{equation}
\end{definition}
An application of Brouwer's fixed point Theorem (see Proposition~\ref{prop:gamma_non_empty} in Appendix~\ref{sec:gamma_non_empty}) gives that $\Gamma(\lambda,\alpha) \neq \emptyset$.
Similarly to the unidimensional case we will express the limit of $F_n$ using the functions $\psi_{P_U}$ and $\psi_{P_V}$. Let
$$
\begin{array}{rccl}
	\mathcal{F}: & (\R_+^*)^2 \times (S_k^+)^2 &\to& \R \\
						  &(\lambda,\alpha,\bbf{q}_u,\bbf{q}_v) &\mapsto& \psi_{P_U}(\lambda \alpha \bbf{q}_v) + \alpha \psi_{P_V}(\lambda \bbf{q}_u) - \frac{1}{2}\lambda \alpha \Tr\big[\bbf{q}_u \bbf{q}_v\big] .
\end{array}
$$

\begin{theorem} \label{th:rs_formula_multidim}
	For all $\lambda,\alpha >0$,
	\begin{equation}\label{eq:rs_multi}
		F_n(\lambda) \xrightarrow[n \to \infty]{} 
	\sup_{(\bbf{q}_u,\bbf{q}_v) \in \Gamma(\lambda,\alpha)} \mathcal{F}(\lambda,\alpha,\bbf{q}_u,\bbf{q}_v) 
	=\sup_{\bbf{q}_v \in S_k^+} \inf_{\bbf{q}_u \in S_k^+} \mathcal{F}(\lambda,\alpha,\bbf{q}_u,\bbf{q}_v) \,.
\end{equation}
	Moreover, these extrema are achieved over the same couples $(\bbf{q}_u,\bbf{q}_v) \in \Gamma(\lambda,\alpha)$.
\end{theorem}
Theorem~\ref{th:rs_formula_multidim} will be proved in Section~\ref{sec:proof_rs_multidim}. 

\begin{proposition}[Limit of the $\MMSE$] \label{prop:csq_multidim}
	For all $\alpha>0$ we have for almost all $\lambda >0$ that all the optimal couples $(\bbf{q}_u,\bbf{q}_v)$ of~\eqref{eq:rs_multi} have the same scalar product $\Tr[\bbf{q}_u \bbf{q}_v] = Q(\lambda,\alpha)$ and
	\begin{align}
		\MMSE_n(\lambda) \xrightarrow[n \to \infty]{} \Tr\big[\E_{P_U}[\bbf{U}\bbf{U}^{\intercal}] \E_{P_V}[\bbf{V}\bbf{V}^{\intercal}]\big] - Q(\lambda,\alpha) \label{eq:lim_mmse_multidim} \,.
	\end{align}
\end{proposition}
The proof of Proposition~\ref{prop:csq_multidim} is a simple extension of the proof of Proposition~\ref{prop:csq} and is therefore left to the reader.

\section{Proof technique} \label{sec:proof_techniques}

Our proof technique is closely related to~\cite{lelarge2016fundamental} that deals with symmetric matrices. It adapts two techniques that originated from the study of the SK model:
\begin{itemize}
	\item[--] A lower bound on limit of the free energy follows from an application of Guerra's interpolation technique for the SK model (see~\cite{guerra2003broken} or~\cite{panchenko2013SK}). 
	\item[--] The converse upper bound is proved (as in~\cite{lelarge2016fundamental}) via cavity computations, inspired from the ``Aizenman-Sims-Starr scheme'' (see~\cite{aizenman2003extended} or~\cite{panchenko2013SK}). 
\end{itemize}
The transposition of these arguments to the context of Bayesian inference is made possible by the obvious but fundamental Nishimori identity (Proposition~\ref{prop:nishimori}) which states that the planted configuration $(\bbf{U},\bbf{V})$ behaves like a sample $(\bbf{u},\bbf{v})$ from the posterior distribution $\P\big((\bbf{U},\bbf{V})= \cdot \ | \ \bbf{Y}\big)$.

However, our inference model differs from the SK model in a crucial point. Under a small perturbation of the model~\eqref{eq:model}, the overlap between the planted solution $(\bbf{U},\bbf{V})$ and a sample $(\bbf{u},\bbf{v})$ from the posterior distribution concentrates around its mean (such behavior is called ``Replica-Symmetric'' in statistical physics). This property is verified for a wide class of inference problems and is a major difference with the SK model, where the overlap concentrates only at high temperature.

Our model differs also from the SK model and the low-rank symmetric matrix estimation by some lack of convexity. The Hamiltonian of the SK model is a Gaussian process $(H_n(\sigma))_{\sigma}$ indexed by the configurations $\sigma \in \{-1,1\}^n$ whose covariance structure is given by $\E [H_n(\sigma^{(1)}) H_n(\sigma^{(2)})] = n R_{1,2}^2$, where $R_{1,2} = \frac{1}{n} \sum_{i=1}^n \sigma^{(1)}_i \sigma^{(2)}_i$ is the overlap between the configurations $\sigma^{(1)}$ and $\sigma^{(2)}$. The covariance is thus a convex function of the overlap $R_{1,2}$. This property is fundamental and allows to use the powerful Guerra's interpolation technique~\cite{guerra2003broken} to derive bounds on the free energy. The low-rank symmetric matrix estimation setting ($\bbf{Y} = \sqrt{\lambda/n} \bbf{X} \bbf{X}^{\intercal} + \bbf{Z}$) enjoys analogous convexity properties and Guerra's interpolation scheme allows to obtain tight bounds on the free energy as proved in~\cite{krzakala2016mutual} and~\cite{korada2009exact}. However, these convexity properties does not holds in our case of non-symmetric matrix estimation and Guerra's interpolation strategy can not be directly applied.

For this reason one has to investigate further the overlap distribution to by-pass this lack of convexity. We mentioned above that the overlaps concentrates around their means. We will show in Section~\ref{sec:tap} that these mean values satisfies asymptotically fixed point equations. These equations are related to the TAP equations for the SK model (see~\cite{thouless1977solution},~\cite{talagrand2010meanfield1}) and are called ``state evolution equations'' in the study of Approximate Message Passing (AMP) algorithms (see Section~\ref{sec:amp} and~\cite{javanmard2013state}). Combining these state evolution equations to the classical Guerra's interpolation scheme allows to derive a tight lower bound.

\section{A decorrelation principle} \label{sec:overlap_concentration}

We present here a general concentration result for the overlap between two replicas (i.e.\ a sample from a posterior distribution), for a large class of inference problems. 
This result will hold under some small perturbation of the inference model, which will correspond to some (small) side-information given to the statistician.
This is the analog of the Ghirlanda-Guerra identities (see~\cite{ghirlanda1998general}) for the SK model: the proof will thus be closely related to the derivation of the Ghirlanda-Guerra identities from~\cite{panchenko2013SK}. In the context of Bayesian inference, a similar result was proved in~\cite{korada2010tight} for the case of CDMA systems with binary inputs.
\\

Let $P$ be a probability distribution on $\R^n$ with bounded support $S \subset [-K,K]^n$, for some $K > 0$.
Let $\bbf{X} = (X_i)_{1 \leq i \leq n} \sim P$. Let $\bbf{Y}$ be a random vector (that could correspond to some noisy observations of $\bbf{X}$) in $\R^N$.

Suppose that the distribution of $\bbf{X}$ given $\bbf{Y}$ takes the following form
$$
\P(\bbf{X} \in A \ | \ \bbf{Y}) = \frac{1}{\mathcal{Z}_n(\bbf{Y})} \int_{\bbf{x} \in A} dP(\bbf{x}) e^{H_n(\bbf{x},\bbf{Y})}, \quad \text{for all Borel set } A \subset \R^n,
$$
where $H_n$ is a measurable function on $\R^{n} \times \R^N$ that can be equal to $-\infty$ (in which case, we use the convention $\exp(- \infty) = 0$) and such that the normalizing constant $\mathcal{Z}_n(\bbf{Y}) = \int_{\bbf{x} \in S} dP(\bbf{x}) e^{H_n(\bbf{x},\bbf{Y})}$ verifies $\E |\log (\mathcal{Z}_n(\bbf{Y}))| < \infty$. We can thus define the free energy
$$
F_n = \frac{1}{n} \E \log \mathcal{Z}_n(\bbf{Y}) = \frac{1}{n} \E \log \left( \int_{\bbf{x} \in S} dP(\bbf{x}) e^{H_n(\bbf{x},\bbf{Y})} \right).
$$
From now we will simply write $H_n(\bbf{x})$ instead of $H_n(\bbf{x},\bbf{Y})$.
Let us consider a small ``perturbation'' of our model: suppose that we have some extra side-information on $\bbf{X}$ that takes the form:
\begin{equation} \label{eq:pert_scalar}
	Y'_i = a \sqrt{s_n} X_i + Z_i, \quad \text{for } 1 \leq i \leq n,
\end{equation}
where $Z_i \iid \cN(0,1)$, $(s_n) \in (0,1)^{\N}$ and $a>0$.
The posterior distribution of $\bbf{X}$ given $\bbf{Y},\bbf{Y}'$ is now $\P(\bbf{x} \, | \, \bbf{Y},\bbf{Y}') = \frac{1}{\cZ_{n,a}^{\text{(pert)}}}P(\bbf{x}) \exp\big(H_{n,a}^{\text{(pert)}}(\bbf{x})\big)$, where
$
H_{n,a}^{\text{(pert)}}(\bbf{x}) = H_{n,a}(\bbf{x}) + h_{n,a}(\bbf{x})
$
and
$$
h_{n,a}(\bbf{x}) = \sum_{i=1}^n a \sqrt{s_n} Z_i x_i + a^2 s_n x_i X_i -\frac{1}{2} a^2 s_n x_i^2 \,.
$$
$\cZ_{n,a}^{\text{(pert)}}$ is the appropriate normalization. We will denote by $\langle \cdot \rangle_{n,a}$ the expectation with respect to the posterior distribution of $\bbf{X}$ given $\bbf{Y},\bbf{Y}'$. We will write:
$$
\big\langle f(\bbf{x}^{(1)},\dots,\bbf{x}^{(k)}) \big\rangle_{n,a}
=
\frac{1}{(\cZ_{n,a}^{\text{(pert)}})^k} \int dP(\bbf{x}^{(1)}) \dots dP(\bbf{x}^{(k)})
f(\bbf{x}^{(1)},\dots,\bbf{x}^{(k)})
\exp\Big(\sum_{l=1}^k H_{n,a}^{\text{(pert)}}(\bbf{x}^{(l)})\Big) \,,
$$
for all $k \geq 1$ and all function $f$ for which this integral is well defined.
The perturbed free energy is
$$
F_{n,a}^{\text{(pert)}} = \frac{1}{n} \E \log \cZ_{n,a}^{\text{(pert)}} =  \frac{1}{n} \E \log \left( \int_{\bbf{x} \in S} dP(\bbf{x}) e^{H_{n,a}^{\text{(pert)}}(\bbf{x})} \right) \,.
$$

The next Lemma tells us that if $s_n \to 0$, then the perturbation does not affect the limit of the free-energy:
\begin{lemma} \label{lem:free_energy_pert}
	We have for all $a \geq 0$, $\big| F_n - F_{n,a}^{\text{(pert)}} \big| \leq 2 K^2 a^2 s_n$.
\end{lemma}
\begin{proof}
	Let $\langle \cdot \rangle_n$ denote the (random) measure on $\R^n$ defined as
	$
	\big\langle f(\bbf{x}) \big\rangle_n = \frac{\int_{\bbf{x} \in S} dP(\bbf{x}) f(\bbf{x}) \exp(H_n(\bbf{x}))}{\int_{\bbf{x} \in S} dP(\bbf{x}) \exp(H_n(\bbf{x}))}
	$
	for every continuous bounded function $f$. We have $F_{n,a}^{\text{(pert)}} - F_n = \frac{1}{n} \E \log \big\langle e^{h_{n,a}(\bbf{x})} \big\rangle_n$.
	Thus, using Jensen's inequality twice
	$$
	\frac{1}{n} \E \left\langle \E_{Z} h_{n,a}(\bbf{x}) \right\rangle_n  = \frac{1}{n} \E \left\langle h_{n,a}(\bbf{x}) \right\rangle_n \leq F_{n,a}^{\text{(pert)}} - F_n \leq \frac{1}{n} \E \log \E_{Z} \left\langle e^{h_{n,a}(\bbf{x})} \right\rangle_n = \frac{1}{n} \E \log \left\langle \E_{Z} e^{h_{n,a}(\bbf{x})} \right\rangle_n \,,
	$$
	where $\E_{Z}$ denotes the expectation with respect to the variables $(Z_i)_{1 \leq i \leq n}$ only. We have, for all $\bbf{x} \in S$, $|\E_{Z} h_{n,a}(\bbf{x})| \leq 2n K^2 a^2 s_n$ and $|\E_{Z} e^{h_{n,a}(\bbf{x})}| \leq e^{n K^2 a^2 s_n}$. We conclude: 
	$$
	- 2K^2 a^2 s_n \leq F_{n,a}^{\text{(pert)}} - F_n \leq K^2 a^2 s_n \,.
	$$
\end{proof}

Let us define
$$
\phi: a \mapsto \frac{1}{n s_n} \log \left( \int_{\bbf{x} \in S} dP(\bbf{x}) e^{H^{\text{(pert)}}_{n,a}(\bbf{x})} \right).
$$
Define also $v_n(s_n) = \sup_{1/2 \leq a \leq 3} \E | \phi(a) - \E \phi(a) |$. The following result shows that, in the perturbed system (under some conditions on $v_n$ and $s_n$) the overlap between two replicas concentrates asymptotically around its expected value.

\begin{theorem}[Overlap concentration] \label{th:overlap_concentration}
	Suppose that
	$$
	\begin{cases}
		v_n(s_n) &\xrightarrow[n \to \infty]{} 0\,, \\
		n s_n &\xrightarrow[n \to \infty]{} + \infty \,.
	\end{cases}
	$$
	Then we have
	$$
	\int_1^2 \E \Big\langle \big(\bbf{x}^{(1)}.\bbf{x}^{(2)} - \E\langle \bbf{x}^{(1)}.\bbf{x}^{(2)} \rangle_{n,a}\big)^2 \Big\rangle_{\!n,a} da \xrightarrow[n \to \infty]{} 0 \,.
	$$
\end{theorem}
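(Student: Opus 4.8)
The plan is to reduce the statement to a fluctuation bound for the overlap $\bbf{x}.\bbf{X}$ of a single replica with the planted vector, to decompose that fluctuation into a ``thermal'' part and a ``disorder'' part, and to control each part with one of the two hypotheses. First, since $\langle\cdot\rangle$ is the posterior of $\bbf{X}$ given $\bbf{Y}$ together with the side observations \eqref{eq:pert_scalar}, the Nishimori identity (Proposition~\ref{prop:nishimori}) holds under $\langle\cdot\rangle$. Applying it to replace the replica $\bbf{x}^{(2)}$ by $\bbf{X}$ in $\E\langle\bbf{x}^{(1)}.\bbf{x}^{(2)}\rangle$ and in $\E\langle(\bbf{x}^{(1)}.\bbf{x}^{(2)})^2\rangle$, and expanding the square, one obtains
\[
\E\big\langle\big(\bbf{x}^{(1)}.\bbf{x}^{(2)}-\E\langle\bbf{x}^{(1)}.\bbf{x}^{(2)}\rangle\big)^2\big\rangle = T(a)+D(a),
\]
with $T(a)=\E\langle(\bbf{x}.\bbf{X}-\langle\bbf{x}.\bbf{X}\rangle)^2\rangle$ (thermal fluctuation of $\bbf{x}.\bbf{X}$ over the replica) and $D(a)=\E[(\langle\bbf{x}.\bbf{X}\rangle-\E\langle\bbf{x}.\bbf{X}\rangle)^2]$ (fluctuation of its Gibbs average over the disorder). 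It then suffices to show $\int_1^2 T(a)\,da\to0$ and $\int_1^2 D(a)\,da\to0$.

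For the thermal part I use the hypothesis $n s_n\to\infty$. Regard the perturbation as a scalar Gaussian side channel of signal-to-noise ratio $\epsilon=a^2 s_n$ and set $f_n(\epsilon)=\tfrac1n\E\log\int dP_1(\bbf{x})e^{H_n^{\text{(pert)}}(\bbf{x})}$. The same Gaussian integration by parts and Nishimori manipulation as in the computation of $F_n'(\lambda)$ gives $f_n'(\epsilon)=\tfrac12\E\langle\bbf{x}.\bbf{X}\rangle=\tfrac1{2n}\sum_i\E\langle x_i\rangle^2$, which is non-decreasing in $\epsilon$ and bounded by $\tfrac12 K_1^2$; hence $f_n$ is convex with uniformly bounded derivative. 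A second computation of the same type (the noise contributions cancelling by Nishimori) shows $f_n''(\epsilon)\ge c\,n\,T(a)$ for a universal $c>0$. Therefore $c\,n\int_{s_n}^{4s_n}\!T\,d\epsilon\le f_n'(4s_n)-f_n'(s_n)\le K_1^2/2$, and changing variables back to $a$ this reads $\int_1^2 T(a)\,da\le C(K_1)/(n s_n)\to0$.

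For the disorder part I use the hypothesis $v_n(s_n)\to0$ and a convexity argument. Since $\partial_a^2 h_{n,a}=\sum_i(2s_n x_i X_i-s_n x_i^2)$ is bounded by $3n s_n K_1^2$ while $\Var_{\langle\cdot\rangle}(\partial_a h_{n,a})\ge0$, the random map $a\mapsto\phi(a)+\tfrac32 K_1^2 a^2$ is convex; moreover $(\E\phi)'(a)=a\,\E\langle\bbf{x}.\bbf{X}\rangle$ is uniformly bounded. Combining $v_n(s_n)=\sup_{a\in[1/2,3]}\E|\phi(a)-\E\phi(a)|\to0$ with the classical convexity (Griffiths-type) lemma — which bounds $|g'(a)-\E g'(a)|$ by the increments of $g-\E g$ over a window of width $\delta$ plus the variation of $(\E g)'$ over that window, cf.\ \cite{panchenko2013SK} — and optimising over $\delta\to0$, one gets $\int_1^2\E|\phi'(a)-\E\phi'(a)|\,da\to0$. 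Now $\phi'(a)=2a\,\langle\bbf{x}.\bbf{X}\rangle+\tfrac{1}{n\sqrt{s_n}}\sum_i z_i\langle x_i\rangle-a\,\langle\|\bbf{x}\|^2\rangle$; the last two terms concentrate around their means (the $z_i$-linear one by Gaussian concentration together with $n s_n\to\infty$, the self-overlap one by a Nishimori-line concentration argument), so $\int_1^2\E|\langle\bbf{x}.\bbf{X}\rangle-\E\langle\bbf{x}.\bbf{X}\rangle|\,da\to0$; since $|\langle\bbf{x}.\bbf{X}\rangle-\E\langle\bbf{x}.\bbf{X}\rangle|\le2K_1^2$, this yields $\int_1^2 D(a)\,da\to0$, and combining with the thermal bound finishes the proof.

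The main obstacle is the last step of the disorder part: extracting the concentration of the overlap $\langle\bbf{x}.\bbf{X}\rangle$ alone from that of $\phi'(a)$, i.e.\ disposing of the spurious noise-dependent terms produced by differentiating in $a$. This is where the two hypotheses must be used jointly, and it calls for the same careful second-moment bookkeeping (Gaussian integration by parts, Cauchy--Schwarz, and the bound $|x_i|,|X_i|\le K_1$) that underlies the thermal estimate.
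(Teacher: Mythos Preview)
Your high-level plan (thermal plus disorder, each controlled by one hypothesis, convexity lemma from \cite{panchenko2013SK} for the latter) matches the paper's. But two steps do not go through as written.

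First, the thermal inequality $f_n''(\epsilon)\ge cn\,T(a)$ is false. A clean way to see this: with $C_{ij}=\langle x_ix_j\rangle-\langle x_i\rangle\langle x_j\rangle$ and $m_i=\langle x_i\rangle$, the vector I--MMSE identity gives $2n f_n''(\epsilon)=\sum_{i,j}\E C_{ij}^2$, while Nishimori gives $n^2T(a)=\E[\bbf{X}^{\!\top}C\bbf{X}]$; subtracting, $n^2T(a)-2nf_n''(\epsilon)=\E[m^{\!\top}Cm]\ge0$, so in fact $f_n''(\epsilon)\le\tfrac{n}{2}T(a)$. What Cauchy--Schwarz does give is $f_n''(\epsilon)\ge c\,n\,T(a)^2$, which after integration still yields $\int_1^2 T(a)\,da=O((ns_n)^{-1/2})\to0$; so this part is salvageable, but not with the inequality you state.

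Second, and this is the real gap you yourself flag, the disorder argument is incomplete: from $\int_1^2\E|\phi'(a)-\E\phi'(a)|\,da\to0$ you cannot extract concentration of $\langle\bbf{x}.\bbf{X}\rangle$ without showing that the two remaining pieces of $\phi'(a)$, namely $\tfrac{1}{n\sqrt{s_n}}\sum_i z_i\langle x_i\rangle$ and $a\langle\|\bbf{x}\|^2\rangle$, concentrate on their own. Neither follows from the hypotheses: the first is a nonlinear function of the $z_i$'s with no small Lipschitz constant, and the self-overlap has no reason to concentrate for a general bounded prior $P_1$. The paper avoids this entirely by never trying to isolate $\langle\bbf{x}.\bbf{X}\rangle$. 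It proves concentration of the full quantity $L(\bbf{x})=\tfrac{1}{ns_n}\partial_a h_{n,a}$ (thermal via $\phi''$, disorder via the convexity lemma, exactly as you outline), and then uses the Ghirlanda--Guerra-type identity
\[
\E\big\langle L(\bbf{x}^{(1)})\,(\bbf{x}^{(1)}.\bbf{x}^{(2)})\big\rangle=2a\,\E\big\langle(\bbf{x}^{(1)}.\bbf{x}^{(2)})^2\big\rangle,
\qquad
\E\langle L\rangle\cdot\E\langle\bbf{x}^{(1)}.\bbf{x}^{(2)}\rangle=2a\big(\E\langle\bbf{x}^{(1)}.\bbf{x}^{(2)}\rangle\big)^2,
\]
obtained by Gaussian integration by parts and Nishimori. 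The point is that once you multiply $L$ by the overlap and take expectations, the $z_i$-linear term and the self-overlap term combine with the integration-by-parts corrections to produce exactly the second moment of the overlap; nothing has to be shown to concentrate individually. This is the missing idea in your last paragraph.
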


Theorem~\ref{th:overlap_concentration} is the analog of Theorem~3.2 (the Ghirlanda-Guerra identities, see~\cite{ghirlanda1998general}) from~\cite{panchenko2013SK} 
and is proved analogously in Appendix~\ref{sec:proof_overlap_concentration}.

\section{Proof of Theorem~\ref{th:rs_formula}} \label{sec:proof_rs}

The proof of Theorem~\ref{th:rs_formula} is divided in four steps. 
In Section~\ref{sec:overlap_concentration_1} we apply the Theorem~\ref{th:overlap_concentration} above to our matrix estimation problem to show that the overlaps concentrates around their expectations.
In Section~\ref{sec:tap}, we show that the overlaps satisfy asymptotically some fixed point equations. In Section~\ref{sec:interpolation}, we prove a lower bound for the limit of $F_n$. In Section~\ref{sec:aizenman}, we use similar arguments as in~\cite{lelarge2016fundamental} to obtain an upper bound on the limit, which will be revealed to be tight in Section~\ref{sec:final_part}.
\\

We will only prove the first equality in Theorem~\ref{th:rs_formula}, since the second follows from the ``sup-inf'' formula Proposition~\ref{prop:min_max} in Appendix~\ref{sec:min_max}.
In order to simplify the proof we are going to prove Theorem~\ref{th:rs_formula} in the case where $P_U$ and $P_V$ have finite (and thus bounded) support $S \subset [-K,K]$. The general case can be deduced from this case by approximating $P_U$ and $P_V$ by mixtures of Diracs as in~\cite{lelarge2016fundamental}, Section~6.2.2.
Since the dependency in $\lambda$ can be incorporated in the vector $\bbf{U}$ (and therefore in the prior $P_U$), we can restrict ourselves to the case $\lambda = 1$.
For simplicity, we are going to consider the case where $n=m$, i.e.\ $\alpha = 1$. The proof for general $\alpha$ can be directly deduced from the proof for $\alpha = 1$. Indeed, assume that $\alpha \in (0,1)$ (the case $\alpha >1$ follows simply by symmetry). Let $B_1, \dots, B_n \iid \Ber(\alpha)$, independently of everything else. Since $\frac{1}{n} \sum B_i$ concentrates tightly around $\alpha$, is is easy to show that the free energy $F_n$ is equal (up to a vanishing term) to the free energy of the observation channel
$$
Y_{i,j} = B_i \Big( \frac{1}{\sqrt{n}} U_i V_j + Z_{i,j} \Big) \quad \text{for} \ 1 \leq i,j \leq n \,,
$$
which is
$$
\widetilde{F}_n = \frac{1}{n} \E \log \int dP_{U}^{\otimes n}(\bbf{u}) dP^{\otimes m}_V(\bbf{v})
\exp \Big(\sum_{i,j=1}^n B_i \big( n^{-1/2} Z_{i,j} u_i v_j +\frac{1}{n} u_i v_j U_i V_j - \frac{1}{2n}u_i^2 v_j^2 \big)\Big) \,.
$$
Therefore, the case $\alpha < 1$ will only add some Bernoulli random variables $B_i$ in the proof for $\alpha = 1$ without changing the arguments. 
\\

For reasons mentioned above, we will suppose in this section to be in the case $\alpha = 1$ and $\lambda =1$, and remove all dependencies in this variables: we will simply write $\Gamma$ instead of $\Gamma(\lambda,\alpha)$ and $\cF(q_u,q_v)$ instead of $\cF(\lambda,\alpha,q_u,q_v)$.
We will use the notation $P_0 = P_U \otimes P_V$.

\subsection{Overlap concentration}\label{sec:overlap_concentration_1}

In this section we apply the results of the previous section to our model~\eqref{eq:model}.
We will need to consider an inference model that is slightly more general than~\eqref{eq:model}. Let $(U_i,V_i)_{1 \leq i \leq n} \iid P_0$ and suppose that we observe
\begin{align}
	Y_{i,j} &= \sqrt{\frac{t}{n}} U_i V_j + Z_{i,j}, &\text{for} \  1 \leq i,j \leq n\,, \label{eq:channel_m}\\
	Y^{(u)}_{i} &= \sqrt{q_u} U_i + Z^{(u)}_i, \qquad 
	Y^{(v)}_{i} = \sqrt{q_v} V_i + Z^{(v)}_i, &\text{for} \  1 \leq i \leq n\,, \label{eq:channel_s}
	\\
	Y^{(u)\prime}_{i} &= a_u\sqrt{s_n} U_i + z^{(u)}_i, \qquad 
	Y^{(v)\prime}_{i} = a_v \sqrt{s_n} V_i + z^{(v)}_i, &\text{for} \  1 \leq i \leq n\,, \label{eq:perturbation}
\end{align}
where $Z^{(u)}_i, Z^{(v)}_i,z^{(u)}_i,z^{(v)}_i \iid \mathcal{N}(0,1)$ are independent of everything else, $t \in [0,1]$, $q_u,q_v \geq 0$, $a_u,a_v \in [0,1]$ and $(s_n) \in (0,1)^{\N}$.
The observations~\eqref{eq:channel_m} corresponds to the original matrix estimation problem.
One can associate to these observations the Hamiltonian $H_n$:
\begin{equation} \label{eq:def_h_n} \\
	H_n(\bbf{u},\bbf{v}) = \sum_{1 \leq i,j \leq n} \sqrt{\frac{t}{n}} u_i v_j Z_{i,j} + \frac{t}{n} u_i U_i v_j V_j - \frac{t}{2n} u_i^2 v_j^2, \quad \text{for } \bbf{u},\bbf{v} \in S^n \,.
\end{equation}
Similarly, one can associate to the observations~\eqref{eq:channel_s} the Hamiltonian:
\begin{equation} \label{eq:def_h_s}
	H_n^{(s)}(\bbf{u},\bbf{v}) = \sum_{i=1}^n
	\sqrt{q_u} u_i Z^{(u)}_{i} +  q_u u_i U_i - \frac{q_u}{2} u_i^2
	+ \sqrt{q_v} v_i Z^{(v)}_{i} + q_v v_i V_i - \frac{q_v}{2} v_i^2 \,.
\end{equation}
The observations~\eqref{eq:perturbation} correspond to a small amount of side-information that will allow us to prove some concentration result for the overlaps as in Section~\ref{sec:overlap_concentration}. The corresponding Hamiltonians read
\begin{align*}
	H_{n,u}^{\text{(pert)}}(\bbf{u}) &= \sum_{i=1}^n \sqrt{s_n}a_u z^{(u)}_i u_i + s_n a_u^2 u_i U_i - \frac{s_n a_u^2}{2} u_i^2 , \quad \text{for } \bbf{u} \in S^n \,,\\
	H_{n,v}^{\text{(pert)}}(\bbf{v}) &= \sum_{i=1}^n \sqrt{s_n}a_v z^{(v)}_i v_i + s_n a_v^2 v_i V_i - \frac{s_n a_v^2}{2} v_i^2 , \quad \text{for } \bbf{v} \in S^n.
\end{align*}
We write, for $\bbf{u},\bbf{v} \in S^n$, $H_n^{\text{(pert)}}(\bbf{u},\bbf{v}) = H_{n,u}^{\text{(pert)}}(\bbf{u}) + H_{n,v}^{\text{(pert)}}(\bbf{v})$ and define the ``total'' Hamiltonian as $H_n^{\text{(tot)}} = H_n + H_n^{(s)} + H_n^{\text{(pert)}}$.
The posterior distribution of $(\bbf{U},\bbf{V})$ given $\mathsf{Y}=(\bbf{Y},\bbf{Y}^{(u)},\bbf{Y}^{(v)},\bbf{Y}^{(u)\prime},\bbf{Y}^{(v)\prime})$ reads
\begin{equation}
	P\big((\bbf{u},\bbf{v}) \, \big| \, \mathsf{Y}\big) = \frac{1}{\cZ_n^{\text{(tot)}}} P_0^{\otimes n}(\bbf{u},\bbf{v}) e^{H_n^{\text{(tot)}}(\bbf{u},\bbf{v})}\,,
\end{equation}
where $\cZ_n^{\text{(tot)}}$ is the appropriate normalization.
Let $\langle \cdot \rangle_{n,a}$ be the associated Gibbs measure on $(S^n)^2$:
\begin{equation}\label{eq:def_gibbs_proof}
\big\langle f(\bbf{u},\bbf{v}) \big\rangle_{n,a} = \frac{ \sum_{\bbf{u},\bbf{v} \in S^n} P^{\otimes n}_0(\bbf{u},\bbf{v}) f(\bbf{u},\bbf{v}) e^{H_n^{\text{(tot)}}(\bbf{u},\bbf{v})}}{ \sum_{\bbf{u},\bbf{v} \in S^n} P_0^{\otimes}(\bbf{u},\bbf{v}) e^{H_n^{\text{(tot)}}(\bbf{u},\bbf{v})}}
, \quad \text{for any function } f \text{ on } (S^n)^2.
\end{equation}

An application of the decorrelation principle of Section~\ref{sec:overlap_concentration} (see Appendix~\ref{sec:proof_overlap_uv} for a proof) gives
\begin{proposition} \label{prop:overlap_uv}
	Define $s_n = n^{-1/4}$, then
	\begin{align}
		&\int_1^2 \int_1^2 \E \Big\langle \big(\bbf{u}^{(1)}.\bbf{u}^{(2)} - \E\langle \bbf{u}^{(1)}.\bbf{u}^{(2)} \rangle_{n,a}\big)^2 \Big\rangle_{n,a} da_u da_v \xrightarrow[n \to \infty]{} 0\,, \label{eq:overlap_u} \\
		&\int_1^2 \int_1^2 \E \Big\langle \big(\bbf{v}^{(1)}.\bbf{v}^{(2)} - \E\langle \bbf{v}^{(1)}.\bbf{v}^{(2)} \rangle_{n,a}\big)^2 \Big\rangle_{n,a} da_u da_v \xrightarrow[n \to \infty]{} 0\,. \label{eq:overlap_v}
	\end{align}
\end{proposition}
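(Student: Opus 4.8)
The plan is to deduce Proposition~\ref{prop:overlap_uv} from the general overlap concentration result, Theorem~\ref{th:overlap_concentration}, applied twice: once to the marginal law of $\bbf{u}$ and once to the marginal law of $\bbf{v}$ under $\langle \cdot \rangle_n$. I treat \eqref{eq:overlap_u} and freeze $a_v \in [1/2,3]$. The point is that the marginal of $\bbf{u}$ under $\langle \cdot \rangle_n$ is precisely an instance of the framework of Section~\ref{sec:general_overlap}, with signal vector $\bbf{X} = \bbf{U}$, prior $P_1 = P_U^{\otimes n}$ (bounded support, so the integrability hypotheses on the partition function are trivially satisfied), base Hamiltonian obtained by summing $H_n + H_n^{(s)} + H_{n,v}^{\text{(pert)}}$ and summing out $\bbf{v}$,
$$
\widetilde{H}_n(\bbf{u}) = \log \sum_{\bbf{v} \in S^n} P_V^{\otimes n}(\bbf{v}) \exp\Big( H_n(\bbf{u},\bbf{v}) + H_n^{(s)}(\bbf{u},\bbf{v}) + H_{n,v}^{\text{(pert)}}(\bbf{v}) \Big),
$$
which is a measurable function of $\bbf{u}$ and the Gaussian observations, and remaining perturbation $H_{n,u}^{\text{(pert)}}(\bbf{u})$, which is exactly the perturbation Hamiltonian $h_{n,a_u}(\bbf{u})$ of Section~\ref{sec:general_overlap}. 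The replicas $\bbf{u}^{(1)},\bbf{u}^{(2)}$ are then two independent samples from this marginal perturbed Gibbs measure, and the overlap $\bbf{u}^{(1)}.\bbf{u}^{(2)} = \frac1n\sum_i u_i^{(1)} u_i^{(2)}$ coincides with the overlap in Theorem~\ref{th:overlap_concentration}.

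Crucially, summing out $\bbf{v}$ leaves the partition function unchanged, so the free energy $\phi^{(u)}(a_u)$ of this marginal problem equals $\phi(a_u,a_v)$ as defined above the preceding lemma; hence its associated quantity $v_n^{(u)}(s_n)$ is bounded by $\sup_{a_u,a_v} \E|\phi(a_u,a_v) - \E\phi(a_u,a_v)| = O(n^{-1/2} s_n^{-1})$. With $s_n = n^{-1/4}$ this gives $v_n(s_n) = O(n^{-1/4}) \to 0$ and $n s_n = n^{3/4} \to +\infty$, so the hypotheses of Theorem~\ref{th:overlap_concentration} hold and
$$
\int_1^2 \E \Big\langle \big( \bbf{u}^{(1)}.\bbf{u}^{(2)} - \E\langle \bbf{u}^{(1)}.\bbf{u}^{(2)} \rangle \big)^2 \Big\rangle_n\, da_u \xrightarrow[n \to \infty]{} 0 .
$$
Since every quantitative estimate in the proof of Theorem~\ref{th:overlap_concentration} depends on $a_v$ only through $v_n(s_n)$ and $n s_n$, both controlled uniformly in $a_v$, this convergence is uniform in $a_v \in [1/2,3]$. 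The inner integral is bounded by $K_0^4$, so integrating over $a_v \in [1,2]$ and using the uniform convergence yields \eqref{eq:overlap_u}. Statement \eqref{eq:overlap_v} follows by the identical argument with the roles of $(\bbf{u},a_u,U)$ and $(\bbf{v},a_v,V)$ exchanged.

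The main obstacle is not any single computation but the bookkeeping of the reduction from the two-parameter bipartite perturbation to the single-parameter setting of Theorem~\ref{th:overlap_concentration}: one must verify that marginalizing over $\bbf{v}$ genuinely produces an admissible instance of the general framework (measurability and integrability of $\widetilde{H}_n$, and the elementary fact that marginals of i.i.d. replica pairs are i.i.d.), and — most importantly — that the concentration rate is uniform in the frozen parameter $a_v$, so that the final integration over $a_v$ is legitimate. This uniformity is exactly what the supremum over both $a_u$ and $a_v$ in the definition of $v_n(s_n)$ in the preceding lemma provides.
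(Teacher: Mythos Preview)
Your proof is correct and follows essentially the same strategy as the paper: fix one perturbation parameter, apply the general concentration Theorem~\ref{th:overlap_concentration} with respect to the other, then integrate over the frozen parameter (using boundedness of the integrand). The only difference is in how you reduce to the single-parameter setting of Section~\ref{sec:general_overlap}. The paper keeps $\bbf{X}=(\bbf{U},\bbf{V})\in S^{2n}$ and remarks that the proof of Theorem~\ref{th:overlap_concentration} goes through verbatim when the perturbation and the overlap involve only half of the coordinates. You instead marginalize out $\bbf{v}$, obtaining an effective Hamiltonian $\widetilde H_n(\bbf{u})$ on $S^n$ with prior $P_U^{\otimes n}$, so that Theorem~\ref{th:overlap_concentration} applies as a black box with no modification. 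Your observation that the marginal partition function equals the full one, hence $\phi^{(u)}(a_u)=\phi(a_u,a_v)$ and $v_n^{(u)}(s_n)\le v_n(s_n)$, is exactly what makes this reduction work. Either route is fine; yours avoids revisiting the proof of the theorem at the (negligible) cost of writing down the marginalized Hamiltonian. Your uniformity argument in $a_v$ is also correct, though note that bounded convergence (the integrand is at most $4K_0^4$) would suffice in place of uniformity for the final integration.
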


In the following, $s_n$ will be equal to $n^{-1/4}$. It will also be convenient to consider $a_u$ and $a_v$ as random variables.
Suppose that $(a_u,a_v) \sim \mathcal{U}([1,2]^2)$ and denote $\E_{a}$ the expectation with respect to $(a_u,a_v)$. We can then rewrite the result of Proposition~\ref{prop:overlap_uv} as
\begin{align}
	&\E_a \E \Big\langle (\bbf{u}^{(1)}.\bbf{u}^{(2)} - \E\langle \bbf{u}^{(1)}.\bbf{u}^{(2)} \rangle_{n,a})^2 \Big\rangle_{n,a} \xrightarrow[n \to \infty]{} 0 \,,\label{eq:overlap_u2} \\
	&\E_a \E \Big\langle \big(\bbf{v}^{(1)}.\bbf{v}^{(2)} - \E\langle \bbf{v}^{(1)}.\bbf{v}^{(2)} \rangle_{n,a}\big)^2 \Big\rangle_{n,a} \xrightarrow[n \to \infty]{} 0 \,.\label{eq:overlap_v2}
\end{align}

\subsection{Fixed point equations} \label{sec:tap}

We have seen (in Proposition~\ref{prop:overlap_uv}) that the overlaps $\bbf{u}^{(1)}.\bbf{u}^{(2)}$ and $\bbf{v}^{(1)}.\bbf{v}^{(2)}$ concentrates asymptotically around their expectations. In this section, we show that these expected values satisfy fixed point equations, in the $n \to \infty$ limit. The analysis is an adaptation of the derivation of the TAP equations for the SK model, see~\cite{talagrand2010meanfield1}.

To obtain these fixed point equations, we are going to do what physicists call ``cavity computations'': we compare the system with $2n$ variables to the system with $2n+2$ variables to study the influence of the ``first'' $2n$ variables on the $2$ ``last'' variables we add.
\\

Let $(\bbf{\tilde{u}},\bbf{\tilde{v}}) \in S^{n+1} \times S^{n+1}$ and decompose $\bbf{\tilde{u}} = (\bbf{u},u')$, $\bbf{\tilde{v}}=(\bbf{v},v')$ where $\bbf{u},\bbf{v} \in S^{n}$ and $u',v' \in S$. We will use the short notations $U' = U_{n+1}$ and $V' = V_{n+1}$. We decompose the Hamiltonian
\begin{align*}
	H_{n+1}(\bbf{\tilde{u}},\bbf{\tilde{v}}) &= H_n'(\bbf{u},\bbf{v}) + h_u(\bbf{v},u') + h_v(\bbf{u},v') + \delta(u',v') \,,
\end{align*}
where
\begin{align*}
	H_n'(\bbf{u},\bbf{v}) &= \sum_{1 \leq i,j \leq n} \frac{\sqrt{t}}{\sqrt{n+1}} u_i v_j Z_{i,j} + \frac{t}{n+1} u_i U_i v_j V_j - \frac{t u_i^2 v_j^2}{2(n+1)} \,,  \\
	h_u(\bbf{v},u') &=\sum_{j=1}^n  \frac{\sqrt{t}}{\sqrt{n+1}}u' v_j Z_{n+1,j} + u' U' \frac{t}{n+1} v_j V_j - \frac{t u'^2 v_j^2 }{2(n+1)} \,, \\
	h_v(\bbf{u},v') &=\sum_{i=1}^n  \frac{\sqrt{t}}{\sqrt{n+1}}v' u_i Z_{i,n+1} + v' V' \frac{t}{n+1} u_i U_i - \frac{t v'^2 u_i^2 }{2(n+1)} \,,\\
	\delta(u',v') &=  \frac{\sqrt{t} u' v'}{\sqrt{n+1}} Z_{n+1,n+1} + \frac{t}{n+1} u' U' v' V' - t \frac{u'^2 v'^2}{2(n+1)} \,.
\end{align*}
Similarly, one can decompose the Hamiltonians $H_n^{(s)}$ and $H_{n+1}^{\text{(pert)}}$
\begin{align*}
	H_{n+1}^{(s)}(\bbf{\tilde{u}},\bbf{\tilde{v}}) &= H_n^{(s)}(\bbf{u},\bbf{v}) + g_u(u') + g_v(v') \,, \\
	H_{n+1}^{\text{(pert)}}(\bbf{\tilde{u}},\bbf{\tilde{v}}) &= H_n^{\text{(pert)} \prime}(\bbf{u},\bbf{v}) + \sqrt{s_{n+1}}a_u z^{(u)}_{n+1} u' + s_{n+1} a_u^2 u' U' - \frac{s_{n+1} a_u^2}{2} u^{\prime 2}  \\
																																												  & \qquad \qquad \qquad \ \ +  \sqrt{s_{n+1}}a_v z^{(v)}_{n+1} v' + s_{n+1} a_v^2 v' V' - \frac{s_{n+1} a_v^2}{2} v^{\prime 2} \,,
\end{align*}
where 
\begin{align*}
	H_n^{\text{(pert)} \prime}(\bbf{u},\bbf{v}) &= \sum_{i=1}^n \sqrt{s_{n+1}}a_u z^{(u)}_i u_i + s_{n+1} a_u^2 u_i U_i - \frac{s_{n+1} a_u^2}{2} u_i^2 
	+ \sum_{i=1}^n \sqrt{s_{n+1}}a_v z^{(v)}_i v_i + s_{n+1} a_v^2 v_i V_i - \frac{s_{n+1} a_v^2}{2} v_i^2 \,, \\
	g_u(u') &= \sqrt{ q_u} u' Z^{(u)}_{n+1} +   q_u u' U' - \frac{ q_u}{2} u'^2 \,, \\
	g_v(v') &= \sqrt{ q_v} v' Z^{(v)}_{n+1} +   q_v v' V' - \frac{ q_v}{2} v'^2  \,.
\end{align*}
Let us now define $H_n^{\text{(tot)}\prime} =H_n' +H_n^{(s)}+ H_n^{\text{(pert)} \prime}$ and $\langle \cdot \rangle'_{n,a}$ the Gibbs measure on $(S^n)^2$ corresponding to the Hamiltonian $H_n^{\text{(tot)} \prime}$. 
An easy adaptation of Proposition~\ref{prop:overlap_uv} gives that the overlaps under the Gibbs measure $\langle \cdot \rangle'_{n,a}$ concentrate around their expectations:
\begin{equation}\label{eq:concentration_prime}
	\E_a \E \Big\langle \big(\bbf{u}^{(1)}.\bbf{u}^{(2)} - \E\langle \bbf{u}^{(1)}.\bbf{u}^{(2)} \rangle'_{n,a}\big)^2 \Big\rangle'_{n,a} 
	+ \ \E_a \E \Big\langle \big(\bbf{v}^{(1)}.\bbf{v}^{(2)} - \E\langle \bbf{v}^{(1)}.\bbf{v}^{(2)} \rangle'_{n,a}\big)^2 \Big\rangle'_{n,a} \xrightarrow[n \to \infty]{} 0 \,.
\end{equation}
Define
\begin{align}
	y(u',v',\bbf{u},\bbf{v}) &= H_{n+1}^{\text{(tot)}}\big((\bbf{u},u'),(\bbf{v},v')\big) - H_n^{\text{(tot)}\prime}(\bbf{u},\bbf{v}) \nonumber
	\\
	&= h_u(\bbf{v},u') + h_v(\bbf{u},v') + \delta(u',v') +g_u(u') + g_v(v')\label{eq:def_y}
	\\
	&+ \sqrt{s_{n+1}}a_u z^{(u)}_{n+1} u' + s_{n+1} a_u^2 u' U' - \frac{s_{n+1} a_u^2}{2} u^{\prime 2} 
	+ \sqrt{s_{n+1}}a_v z^{(v)}_{n+1} v' + s_{n+1} a_v^2 v' V' - \frac{s_{n+1} a_v^2}{2} v^{\prime 2} \,. \nonumber
\end{align}
Define the random variables
\begin{align}
	Q_u' &= \Big\langle \frac{1}{n} \sum_{i=1}^n u^{(1)}_i u^{(2)}_i \Big\rangle'_{n,a}  = \frac{1}{n} \sum_{i=1}^n \langle u_i \rangle_{n,a}^{\prime \, 2} \,, \label{eq:def_q_u} \\
	Q_v' &= \Big\langle \frac{1}{n} \sum_{i=1}^n v^{(1)}_i v^{(2)}_i \Big\rangle'_{n,a} = \frac{1}{n} \sum_{i=1}^n \langle v_i \rangle_{n,a}^{\prime \, 2} \,, \label{eq:def_q_v} 
\end{align}
where $(\bbf{u}^{(1)},\bbf{v}^{(1)})$ and $(\bbf{u}^{(2)},\bbf{v}^{(2)})$ are two independent replicas sampled from $\langle \cdot \rangle'_{n,a}$.
Let $\phi: S^4 \to \R$ and define
\begin{align*}
	A &= \Big\langle \sum_{u',v' \in S} P_0(u',v') \phi(u',v',U',V') e^{y(u',v',\bbf{u},\bbf{v})} \Big\rangle'_{n,a}, \text{ where } (\bbf{u},\bbf{v}) \text{ is a replica from } \langle \cdot \rangle'_{n,a}, \\
	B &= \sum_{u',v' \in S} P_0(u',v') \phi(u',v',U',V') \exp(h_0(u',v')) \,,
\end{align*}
(recall the short notation $U'=U_{n+1}$ and $V'=V_{n+1}$) where 
\begin{align*}
	h_0(u',v') &= u' \Big(\sum_{j=1}^n \frac{\sqrt{t}Z_{n+1,j}}{\sqrt{n+1}} \langle v_j \rangle'_{n,a}\Big) + u' U' \frac{t n}{n+1} Q_v' - u'^2 \frac{tn}{2(n+1)} Q_v' \\
			   &+ v' \Big( \sum_{i=1}^n \frac{\sqrt{t}Z_{i,n+1} }{\sqrt{n+1}} \langle u_i \rangle'_{n,a} \Big) + v' V' \frac{t n }{n+1} Q_u' - v'^2 \frac{t n }{2(n+1)} Q_u' 
	 +g_u(u') + g_v(v') \,.
\end{align*}
Recall that $\E_a$ denotes the expectation with respect to the perturbation $a_u, a_v \iid \mathcal{U}([0,1])$.
\begin{lemma} \label{lem:l2}
	\begin{align*}
		\E_a \E ( A - B )^2 \xrightarrow[n \to \infty]{} 0 \,.
	\end{align*}
\end{lemma}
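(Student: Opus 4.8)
The strategy is a standard ``cavity'' comparison à la Talagrand, powered by the overlap concentration of Section~\ref{sec:overlap_concentration} and the Nishimori identity. Since $S$ is finite and $\phi$ is bounded, $A-B = \sum_{u',v'\in S} P_0(u',v')\phi(u',v',U',V')(A_{u',v'}-B_{u',v'})$ with $A_{u',v'} = \big\langle e^{y(u',v',\bbf{u},\bbf{v})}\big\rangle'$ and $B_{u',v'} = e^{h_0(u',v')}$, so by Cauchy--Schwarz it suffices to prove $\E_a\E(A_{u',v'}-B_{u',v'})^2 \to 0$ for each fixed $(u',v')\in S^2$. All the Gaussians entering $y$ that carry the index $n+1$ are independent of the measure $\langle\cdot\rangle'$; conditionally on $\langle\cdot\rangle'$ and on a replica, $y$ is a Gaussian plus a term bounded by a constant (because $S\subset[-K_0,K_0]$), so every $L^p$-norm below is finite and bounded uniformly in $a_u,a_v\in[1,2]$. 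In $y$ the diagonal term $\delta(u',v')$ is of order $n^{-1/2}$ and the extra site-$(n+1)$ perturbation terms of order $\sqrt{s_{n+1}} = (n+1)^{-1/8}$, both $\to 0$ in $L^2$, while $g_u(u')+g_v(v')$ already appears inside $h_0$; hence, writing $h_u := h_u(\bbf{v},u')$, $h_v := h_v(\bbf{u},v')$ and denoting by $\bar h_u(u'),\bar h_v(v')$ the ``effective fields'' for which $h_0 = g_u + g_v + \bar h_u + \bar h_v$, one reduces (using that $\delta$, the perturbation terms, $g_u$, $g_v$ are $L^p$-controlled) to showing
$$
\E_a\E\big(\langle e^{h_u + h_v}\rangle' - e^{\bar h_u + \bar h_v}\big)^2 \xrightarrow[n\to\infty]{} 0 .
$$

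The core is a termwise cavity computation. With $c := \sqrt{\lambda/(n+1)}$, $a_j := v_j - \langle v_j\rangle'$, $b_i := u_i - \langle u_i\rangle'$, a direct algebraic rearrangement gives
$$
h_u + h_v - \bar h_u - \bar h_v = G_u + G_v + \mathrm{err},
$$
where $G_u = u'c\sum_{j=1}^n Z_{n+1,j}a_j - \tfrac{u^{\prime 2}c^2}{2}\sum_{j=1}^n a_j^2$, $G_v$ is its analogue in $v',b_i,Z_{i,n+1}$, and $\mathrm{err}$ is a finite sum of terms, each a quantity bounded by a constant times one of $\bbf{v}.\bbf{V}-Q_v'$, $\bbf{u}.\bbf{U}-Q_u'$, $\tfrac1n\sum_j v_j\langle v_j\rangle' - Q_v'$, $\tfrac1n\sum_i u_i\langle u_i\rangle' - Q_u'$. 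By the overlap concentration for $\langle\cdot\rangle'$ (the adaptation of Proposition~\ref{prop:overlap_uv} noted after its proof) and the Nishimori identity (Proposition~\ref{prop:nishimori}, which lets one add $\bbf{U},\bbf{V}$ to the list of replicas, so that $\bbf{v}.\bbf{V}$, $\bbf{u}.\bbf{U}$ and the two magnetization overlaps concentrate on $\E Q_v',\E Q_u'$, the values around which $Q_v',Q_u'$ themselves concentrate), one obtains $\E_a\E\langle\mathrm{err}^2\rangle' \to 0$.

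The key remaining estimate is $\E_a\E(\langle e^{G_u+G_v}\rangle'-1)^2\to 0$. Integrating out the fresh Gaussians $(Z_{n+1,j})_j$ and $(Z_{i,n+1})_i$ — mutually independent and independent of $\langle\cdot\rangle'$ — yields the exact identity $\E_Z e^{G_u+G_v} = 1$; this is precisely the role of the Onsager terms $-\tfrac{\lambda}{2n}u_i^2v_j^2$. Hence $\E\langle e^{G_u+G_v}\rangle' = 1$, and with two replicas $\E_Z e^{G_u^{(1)}+G_v^{(1)}+G_u^{(2)}+G_v^{(2)}} = \exp\big(u^{\prime 2}c^2n\,O^v_c + v^{\prime 2}c^2n\,O^u_c\big)$ with $O^v_c = \tfrac1n\sum_j(v^{(1)}_j-\langle v_j\rangle')(v^{(2)}_j-\langle v_j\rangle')$ and $O^u_c$ its analogue; thus $\E_a\E(\langle e^{G_u+G_v}\rangle'-1)^2 = \E_a\E\big(\langle e^{u^{\prime 2}c^2n\,O^v_c + v^{\prime 2}c^2n\,O^u_c}\rangle' - 1\big)$. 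Since $c^2n = \tfrac{n\lambda}{n+1} = O(1)$, since the centered overlaps are bounded (by $4K_0^2$) with $\langle O^v_c\rangle' = \langle O^u_c\rangle' = 0$, and since the algebraic identity
$$
\E\langle(\bbf{v}^{(1)}.\bbf{v}^{(2)}-Q_v')^2\rangle' = \E\langle(O^v_c)^2\rangle' + 2\,\E\,\Var_{\langle\cdot\rangle'}\!\big(\tfrac1n\textstyle\sum_j v_j\langle v_j\rangle'\big)
$$
(expand both sides using replica independence; both right-hand summands are $\ge 0$) combined with the overlap concentration gives $\E_a\E\langle(O^v_c)^2\rangle'\to 0$ and likewise for $O^u_c$, a Taylor expansion of the exponential yields the claim.

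Finally one assembles the pieces: with $|e^\alpha - e^\beta| \le (e^\alpha+e^\beta)|\alpha-\beta|$, the boundedness of $\mathrm{err}$ and the uniform-in-$n$ exponential moment bounds for $G_u,G_v$ (the quadratic Onsager pieces carry the favourable sign and $c^2\sum_j a_j^2 \le 4\lambda K_0^2$), Jensen's inequality $(\langle f\rangle')^2 \le \langle f^2\rangle'$ and Cauchy--Schwarz, the two bounds $\E_a\E\langle\mathrm{err}^2\rangle'\to0$ and $\E_a\E(\langle e^{G_u+G_v}\rangle'-1)^2\to0$ combine into $\E_a\E\big(\langle e^{h_u+h_v}\rangle'\,e^{-\bar h_u-\bar h_v}-1\big)^2\to0$; multiplying by the $L^2$-bounded factor $e^{\bar h_u+\bar h_v}$ gives $\E_a\E(\langle e^{h_u+h_v}\rangle'-e^{\bar h_u+\bar h_v})^2\to0$, as required. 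The main obstacle is the core step: recognizing that the Onsager terms cancel \emph{exactly} the variance of the cavity noise field, so that after integrating the fresh Gaussians the only residual is an exponential of the \emph{centered} overlap — which is exactly what the overlap-concentration theorem controls; the remainder is routine, the one delicate point being to keep every moment bound uniform in $a_u,a_v\in[1,2]$ so that the average $\E_a$ can legitimately be taken at the end.
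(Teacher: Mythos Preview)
Your argument is correct; all the moment bounds, the Onsager cancellation $\E_Z e^{G_u+G_v}=1$, the control of the centered overlap via the identity $\E\langle(\bbf{v}^{(1)}.\bbf{v}^{(2)}-Q_v')^2\rangle' = \E\langle(O^v_c)^2\rangle' + 2\,\E\,\Var_{\langle\cdot\rangle'}(\tfrac1n\sum_j v_j\langle v_j\rangle')$, and the final assembly go through. The only place where you are a little quick is the last step, ``multiplying by the $L^2$-bounded factor $e^{\bar h_u+\bar h_v}$'': since $\bar h_u,\bar h_v$ share the fresh Gaussians $Z_{n+1,j},Z_{i,n+1}$ with $G_u,G_v$, you cannot factor the expectation; one needs H\"older together with the uniform $L^p$ bounds you already invoke and an interpolation (e.g.\ $\|X_n\|_4\to 0$ from $\|X_n\|_2\to 0$ and $\sup_n\|X_n\|_8<\infty$). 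This is routine but worth stating.

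Your route differs genuinely from the paper's. The paper expands $\E(A-B)^2 = (\E A^2-\E B^2)-2(\E AB-\E B^2)$ and, integrating out all fresh Gaussians at once via $\E'$, shows that $\E'A^2$, $\E'AB$ and $\E'B^2$ are all the \emph{same} smooth bounded function $F$ evaluated at different overlap arguments (replica overlaps versus $Q_u',Q_v'$); Lipschitzness of $F$ on a compact set plus overlap concentration then finishes in one stroke. This avoids the explicit centering $G_u,G_v$, the termwise treatment of $\mathrm{err}$, and the final moment-interpolation gymnastics. Your approach, by contrast, makes the Onsager mechanism completely explicit --- the quadratic self-energy term is designed exactly so that $\E_Z e^{G_u+G_v}=1$ --- and is closer in spirit to Talagrand's SK cavity. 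What you gain is transparency about \emph{why} the comparison works; what the paper's approach gains is brevity and the elimination of the delicate product-of-$L^2$ issue at the end.
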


\begin{proof}
	It suffices to prove $\E_a (\E A^2 - \E B^2) \xrightarrow[n \to \infty]{} 0$ and $\E_a (\E AB - \E B^2) \xrightarrow[n \to \infty]{} 0$.
	The proof follows exactly the same steps than Lemma~28 from~\cite{lelarge2016fundamental}, so we omit it for the sake of brevity.
\end{proof}
\\

Let $\langle \cdot \rangle_{n+1,a}$ be the Gibbs measure on $(S^{n+1})^2$ associated with the Hamiltonian $H_{n+1}^{\text{(tot)}} = H_{n+1} + H_{n+1}^{(s)} + H_{n+1}^{\text{(pert)}}$ as defined by~\eqref{eq:def_gibbs_proof}.
\begin{lemma} \label{lem:tap}
	\begin{align*}
		\E_a \E \left| \big\langle \phi(u_{n+1},v_{n+1},U_{n+1},V_{n+1}) \big\rangle_{n+1,a} - \frac{\sum_{u',v' \in S} P_0(u',v') \phi(u',v',U',V') \exp(h_0(u',v'))}{\sum_{u',v' \in S} P_0(u',v') \exp(h_0(u',v'))} \right| \xrightarrow[n \to \infty]{} 0\,.
	\end{align*}
\end{lemma}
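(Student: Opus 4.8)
The plan is to reduce the statement to two facts already in hand: the $L^{2}$-estimate of Lemma~\ref{lem:l2}, and a bound on the negative moments of the ``cavity partition function'', uniform in $n$ and in the perturbation parameters, so that dividing by it causes no harm. First I would record the cavity decomposition of the $(n+1)$-spin Gibbs average: splitting $\bbf{\tilde{u}}=(\bbf{u},u')$, $\bbf{\tilde{v}}=(\bbf{v},v')$ in $\sum_{\bbf{\tilde{u}},\bbf{\tilde{v}}}P_0 e^{H_{n+1}^{\text{(tot)}}}$, using $H_{n+1}^{\text{(tot)}}\big((\bbf{u},u'),(\bbf{v},v')\big)=H_n^{\text{(tot)}\prime}(\bbf{u},\bbf{v})+y(u',v',\bbf{u},\bbf{v})$, and dividing numerator and denominator by the partition function $Z_n^{\text{(tot)}\prime}$ of $\langle\cdot\rangle'$, one gets
\[
\big\langle \phi(u_{n+1},v_{n+1},U_{n+1},V_{n+1})\big\rangle_{n+1}=\frac{A}{A_1},
\]
where $A$ is the quantity of Lemma~\ref{lem:l2} and $A_1=\big\langle \sum_{u',v'\in S}P_0(u',v')e^{y(u',v',\bbf{u},\bbf{v})}\big\rangle'$ is its value for $\phi\equiv 1$. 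Write $B_1$ for the analogous $\phi\equiv 1$ version of $B$ and set $M=\max_{S^4}|\phi|$. Since $A/A_1=\langle\phi\rangle_{n+1}$ is a Gibbs average and $B/B_1$ is a weighted average of the values of $\phi$ on $S^4$, both are bounded by $M$ in absolute value; also $A_1,B_1>0$. From $\frac{A}{A_1}-\frac{B}{B_1}=\frac{A-B}{A_1}+\frac{B}{B_1}\cdot\frac{B_1-A_1}{A_1}$ I then get the pointwise bound
\[
\Big|\,\langle\phi\rangle_{n+1}-B/B_1\,\Big|\ \le\ \frac{|A-B|+M\,|A_1-B_1|}{A_1}\ =:\ \frac{X}{A_1}.
\]

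The core step is to bound $\E[A_1^{-2}]$ uniformly in $n$ and in $(a_u,a_v)\in[1,2]^2$. Fix $(\bar u,\bar v)\in S^2$ with $P_0(\bar u,\bar v)>0$; keeping only this term in the sum and applying Jensen's inequality to the probability measure $\langle\cdot\rangle'$ gives $A_1\ge P_0(\bar u,\bar v)\,e^{\langle y(\bar u,\bar v,\bbf{u},\bbf{v})\rangle'}$. Next, split $\langle y(\bar u,\bar v,\bbf{u},\bbf{v})\rangle'=W+\rho$, where $W$ collects the terms linear in the ``fresh'' Gaussians of index $n+1$ (namely $Z_{n+1,j},Z_{i,n+1},Z_{n+1,n+1},Z^{(u)}_{n+1},Z^{(v)}_{n+1},z^{(u)}_{n+1},z^{(v)}_{n+1}$) and $\rho$ is the remainder. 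Conditionally on all randomness of index $\le n$ together with $(U_{n+1},V_{n+1})$ — with respect to which $\langle\cdot\rangle'$, hence every coefficient in $W$ and in $\rho$, is measurable, whereas those fresh Gaussians are i.i.d.\ $\mathcal{N}(0,1)$ — the variable $W$ is centered Gaussian with variance
\[
\bar u^2\frac{\lambda}{n+1}\sum_{j\le n}\langle v_j\rangle'^2+\bar v^2\frac{\lambda}{n+1}\sum_{i\le n}\langle u_i\rangle'^2+\frac{\lambda\bar u^2\bar v^2}{n+1}+\lambda q_u\bar u^2+\lambda q_v\bar v^2+s_{n+1}\big(a_u^2\bar u^2+a_v^2\bar v^2\big)\ \le\ C,
\]
because $|\langle u_i\rangle'|,|\langle v_j\rangle'|,|\bar u|,|\bar v|\le K_0$, $s_{n+1}\le 1$ and $a_u,a_v\le 3$, so $C$ depends only on $\lambda,q_u,q_v,K_0$; the same bounds give $|\rho|\le C'$ almost surely. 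Hence $A_1^{-2}\le P_0(\bar u,\bar v)^{-2}e^{2C'}e^{-2W}$, and taking the conditional expectation (a Gaussian moment) and then the full expectation, $\E\big[e^{-2W}\big]\le e^{2C}$, whence $\E[A_1^{-2}]\le P_0(\bar u,\bar v)^{-2}e^{2C+2C'}$, uniformly as claimed.

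Finally I would combine the two steps. For fixed $(a_u,a_v)$, Cauchy--Schwarz gives $\E\big|\langle\phi\rangle_{n+1}-B/B_1\big|\le\big(\E[X^2]\big)^{1/2}\big(\E[A_1^{-2}]\big)^{1/2}$; since $X^2\le 2|A-B|^2+2M^2|A_1-B_1|^2$, a further Cauchy--Schwarz with respect to $(a_u,a_v)\sim\mathcal{U}([1,2]^2)$ yields
\[
\E_a\E\big|\langle\phi\rangle_{n+1}-B/B_1\big|\ \le\ \big(\E_a\E[X^2]\big)^{1/2}\big(\E_a\E[A_1^{-2}]\big)^{1/2}.
\]
By Lemma~\ref{lem:l2} applied both to $\phi$ and to the constant function $1$, $\E_a\E[X^2]\le 2\,\E_a\E(A-B)^2+2M^2\,\E_a\E(A_1-B_1)^2\to 0$, while $\E_a\E[A_1^{-2}]$ is bounded by the previous step; the lemma follows. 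I expect the main obstacle to be precisely this uniform bound on $\E[A_1^{-2}]$: it is where the $1/\sqrt{n+1}$ scaling of the matrix channel is used, since that scaling is exactly what makes the cavity field $\langle y(\bar u,\bar v,\bbf{u},\bbf{v})\rangle'$ a Gaussian of $O(1)$ variance plus an $O(1)$ remainder, uniformly in $n$, and hence keeps $\E[e^{-2W}]$ finite; the rest is cavity bookkeeping and the already-established Lemma~\ref{lem:l2}.
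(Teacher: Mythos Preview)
Your proof is correct and follows essentially the same route as the paper's: cavity decomposition of $\langle\phi\rangle_{n+1}$ as a ratio, an algebraic bound on the difference of ratios, a uniform bound on the inverse second moment of a denominator via Jensen and a Gaussian-moment computation, then Cauchy--Schwarz together with Lemma~\ref{lem:l2} applied to $\phi$ and to $1$. The only cosmetic difference is that the paper controls $\E[B_1^{-2}]$ (in your notation) rather than $\E[A_1^{-2}]$, which makes the Jensen step one line shorter since $B_1$ involves no $\langle\cdot\rangle'$-average; your extra Jensen on $\langle\cdot\rangle'$ is fine and the two arguments are interchangeable.
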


\begin{proof}
	By the definition of $y$ (see Equation~\eqref{eq:def_y}) we have
	\begin{equation} \label{eq:cavity_p}
		\Big\langle \phi(u_{n+1},v_{n+1},U_{n+1},V_{n+1}) \Big\rangle_{n+1,a} = \frac{\left\langle \sum_{u',v' \in S}P_0(u',v')  \phi(u',v',U',V') \exp(y(u',v',\bbf{u},\bbf{v})) \right\rangle'_{n,a}}{\left\langle \sum_{u',v' \in S} P_0(u',v') \exp(y(u',v',\bbf{u},\bbf{v})) \right\rangle'_{n,a}} \,.
	\end{equation}
	Define
	\begin{align*}
		R &=\sum_{u',v' \in S} P_0(u',v') \phi(u',v',U',V') \exp(h_0(u',v')) \ , \
		S =\sum_{u',v' \in S} P_0(u',v') \exp(h_0(u',v')) \,, \\
		R'&=\Big\langle \sum_{u',v' \in S} P_0(u',v') \phi(u',v',U',V') \exp(y(u',v',\bbf{u},\bbf{v})) \Big\rangle'_{n,a} \ , \
		S'=\Big\langle \sum_{u',v' \in S} P_0(u',v') \exp(y(u',v',\bbf{u},\bbf{v})) \Big\rangle'_{n,a} \,.
	\end{align*}
	We have to prove that $\E_a \E \big| \frac{R'}{S'} - \frac{R}{S} \big| \xrightarrow[n \to \infty]{} 0$.
	By equation~\eqref{eq:cavity_p} $|\frac{R'}{S'}| \leq \|\phi\|_{\infty}$, therefore
	\begin{align*}
		\Big| \frac{R'}{S'} - \frac{R}{S} \Big| = \Big| \frac{R'}{S'} \frac{S-S'}{S} + \frac{R'-R}{S} \Big| \leq \frac{1}{|S|} (\|\phi\|_{\infty} +1 ) ( |R-R'| + |S-S'| ) \,.
	\end{align*}
	Using the Cauchy-Schwarz inequality,
	\begin{align*}
		\E \Big| \frac{R'}{S'} - \frac{R}{S} \Big| \leq (\|\phi\|_{\infty} +1) \sqrt{\E S^{-2}} \sqrt{\E (R-R')^2 + \E (S-S')^2 }.
	\end{align*}
	By Jensen's inequality, one have
	\begin{align*}
		\E S^{-2} \leq \E \left[ \sum_{u',v' \in S} P_0(u',v') \exp(-2 h_0(u',v')) \right]
		= \sum_{u',v' \in S} P_0(u',v') \E \left[ \exp(-2 h_0(u',v')) \right] \leq e^{10  K^4 + 3  (q_u +q_v) K^2} \,.
	\end{align*}
	We apply Lemma~\ref{lem:l2} twice (with $\phi = 1$ and ``$\phi=\phi$'') to obtain
	$
	\E_a \sqrt{\E (R-R')^2 + \E (S-S')^2 } \xrightarrow[n \to \infty]{} 0
	$ which concludes the proof.
\end{proof}

\begin{lemma} \label{lem:gibbs_cavity}
	$$
	\E_a \E | \langle u_1 \rangle_{n+1,a}  - \langle u_1 \rangle'_{n,a} | \xrightarrow[n \to \infty]{} 0 \,.
	$$
\end{lemma}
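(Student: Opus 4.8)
The plan is to rerun the cavity computation of Lemmas~\ref{lem:l2} and~\ref{lem:tap}, this time carrying the bulk spin $u_1$ along as an inert ``spectator''. Set $W(\bbf{u},\bbf{v})=\sum_{u',v'\in S}P_0(u',v')e^{y(u',v',\bbf{u},\bbf{v})}$ with $y$ as in~\eqref{eq:def_y}. Splitting $u_{n+1},v_{n+1}$ off from $H_{n+1}^{\text{(tot)}}$ and using $P_0\big((\bbf{u},u'),(\bbf{v},v')\big)=P_0(\bbf{u},\bbf{v})P_0(u',v')$ gives the exact identity $\langle u_1\rangle_{n+1}=\langle u_1 W\rangle'/\langle W\rangle'$, so that
$$
\langle u_1\rangle_{n+1}-\langle u_1\rangle'=\frac{\big\langle u_1\,(W-\langle W\rangle')\big\rangle'}{\langle W\rangle'} .
$$
It therefore suffices to establish (i) $\E_a\E\big(\langle u_1 W\rangle'-\langle u_1\rangle'\langle W\rangle'\big)^2\xrightarrow[n\to\infty]{}0$ and (ii) $\sup_{a_u,a_v\in[1,2]}\E\,\langle W\rangle'^{-2}<\infty$; combining these with the Cauchy--Schwarz inequality in $\E$ and Jensen's inequality to move $\E_a$ inside the square root then yields the lemma.

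For (ii) I would fix any $(u_0',v_0')\in S^2$ and apply Jensen's inequality twice to get $\langle W\rangle'^{-2}\le P_0(u_0',v_0')^{-2}\big\langle e^{-2y(u_0',v_0',\bbf{u},\bbf{v})}\big\rangle'$; since for fixed $\bbf{u},\bbf{v}\in S^n$ the random variable $y(u_0',v_0',\bbf{u},\bbf{v})$ is, in the $(n{+}1)$-th row and column noise (independent of $\langle\cdot\rangle'$), Gaussian with variance bounded uniformly in terms of $\lambda,q_u,q_v,K_0$, plus a bounded deterministic part, one gets $\E\,\langle W\rangle'^{-2}\le C$ uniformly in $a_u,a_v$ — exactly the bound used on $\E\,S^{-2}$ in the proof of Lemma~\ref{lem:tap}. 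For (i), the crucial point is again that the $(n{+}1)$-th noise variables are independent of $\langle\cdot\rangle'$, so one integrates them out first: denoting that partial expectation by $\E'$ and taking two independent replicas from $\langle\cdot\rangle'$, the cancellation of the self-interaction term against the noise variance already exploited in~\eqref{eq:esp_a_1}--\eqref{eq:esp_a_3} shows that $\E'\big[W(\bbf{u}^{(1)},\bbf{v}^{(1)})W(\bbf{u}^{(2)},\bbf{v}^{(2)})\big]$ equals the bounded, $L_0$-Lipschitz function $F$ defined in the proof of Lemma~\ref{lem:l2} (with $\phi\equiv 1$), evaluated at $(\bbf{v}^{(1)}.\bbf{v}^{(2)},\bbf{v}^{(1)}.\bbf{V},\bbf{v}^{(2)}.\bbf{V},\bbf{u}^{(1)}.\bbf{u}^{(2)},\bbf{u}^{(1)}.\bbf{U},\bbf{u}^{(2)}.\bbf{U},o_n(1))$ — and, crucially, this expression does not involve $u_1^{(1)},u_1^{(2)}$ separately. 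Writing $\langle u_1 W\rangle'=\langle u_1^{(1)}W(\bbf{u}^{(1)},\bbf{v}^{(1)})\rangle'$ and $\langle W\rangle'=\langle W(\bbf{u}^{(2)},\bbf{v}^{(2)})\rangle'$, expanding the square and applying $\E'$ gives $\big\langle (u_1^{(1)}-\langle u_1\rangle')(u_1^{(2)}-\langle u_1\rangle')\,F(\text{overlaps},o_n(1))\big\rangle'$. One then replaces each overlap by its limit ($Q_v'$ for the $v$-overlaps, $Q_u'$ for the $u$-overlaps), the error being controlled by the Lipschitz bound on $F$ (the prefactor being at most $4K_0^2$), by the concentration of the overlaps under $\langle\cdot\rangle'$ (the adaptation of Proposition~\ref{prop:overlap_uv}), and by the Nishimori identity (Proposition~\ref{prop:nishimori}) for the overlaps with $\bbf{U},\bbf{V}$. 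Since $F$ evaluated at these limiting overlaps is measurable with respect to the randomness carried by $\langle\cdot\rangle'$ (and by $U',V'$), it factors out of $\langle\cdot\rangle'$, and $\langle(u_1^{(1)}-\langle u_1\rangle')(u_1^{(2)}-\langle u_1\rangle')\rangle'=0$ because the two replicas are independent with common $\langle\cdot\rangle'$-mean $\langle u_1\rangle'$; hence $\E_a\E\big(\langle u_1 W\rangle'-\langle u_1\rangle'\langle W\rangle'\big)^2\to 0$.

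I expect step (i) to be the only genuinely new work: one must verify that the factors $u_1^{(1)},u_1^{(2)}$ pass through the $\E'$-integration unchanged, and that the resulting Lipschitz function of the overlaps can be linearised at the concentration point and pulled out of $\langle\cdot\rangle'$. All the required ingredients — the Gaussian-integration cancellation, the overlap concentration under $\langle\cdot\rangle'$, the Nishimori identity, and the moment bound on $\langle W\rangle'^{-2}$ — are already available from the proofs of Lemmas~\ref{lem:l2} and~\ref{lem:tap}, so this lemma amounts to the same computation with one extra passive variable.
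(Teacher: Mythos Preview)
Your argument is correct, but it takes a somewhat different route from the paper's. Both start from the identity
$\langle u_1\rangle_{n+1}-\langle u_1\rangle'=\big\langle (u_1-\langle u_1\rangle')W\big\rangle'/\langle W\rangle'$
and both control $\E\langle W\rangle'^{-2}$ by Jensen's inequality exactly as you indicate. The divergence is in how the numerator is handled. You integrate out the $(n{+}1)$-th noise first to obtain the Lipschitz function $F$ of the overlaps (the $\phi\equiv 1$ case of Lemma~\ref{lem:l2}), then use overlap concentration to replace $F(\text{overlaps})$ by $F(Q_v',\dots,Q_u',0)$; this factor is constant under $\langle\cdot\rangle'$, so it pulls out and the remaining replica covariance $\langle(u_1^{(1)}-\langle u_1\rangle')(u_1^{(2)}-\langle u_1\rangle')\rangle'$ vanishes identically. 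The paper instead exploits exchangeability of the indices $1,\dots,n$ to replace the single-site covariance $(u_1^{(1)}-\langle u_1\rangle')(u_1^{(2)}-\langle u_1\rangle')$ by the average $\frac{1}{n}\sum_i(u_i^{(1)}-\langle u_i\rangle')(u_i^{(2)}-\langle u_i\rangle')$, and then applies Cauchy--Schwarz in $\E\langle\cdot\rangle'$ to split the expression into this averaged fluctuation (which is an overlap fluctuation and tends to $0$ by Proposition~\ref{prop:overlap_uv}) times a crude moment bound $\E\langle\sum P_0P_0\,e^{2y^{(1)}+2y^{(2)}}\rangle'\le C_2$.

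Both arguments rest on the same engine (overlap concentration under $\langle\cdot\rangle'$), but the paper's is slightly more economical: it never needs the explicit form of $\E'[W^{(1)}W^{(2)}]$ or its Lipschitz constant, only a second-moment bound. Your route is perfectly valid and has the virtue of reusing the computation of Lemma~\ref{lem:l2} wholesale. One small remark: your parenthetical ``crucially, this expression does not involve $u_1^{(1)},u_1^{(2)}$ separately'' is not quite accurate (the overlaps are $\frac{1}{n}$-sums that do contain these coordinates) and is not actually needed --- what matters is only that $F$ evaluated at $Q_u',Q_v'$ is measurable with respect to the disorder and hence factors out of $\langle\cdot\rangle'$, which you state correctly.
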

\begin{proof}
	By the definition of $y$ (see equation~\eqref{eq:def_y}) we have
	\begin{equation} \label{eq:u_p}
		\frac{\big\langle (u_1 - \langle u_1 \rangle'_{n,a}) \sum_{u',v' \in S} P_0(u',v') e^{y(u',v',\bbf{u},\bbf{v})} \big\rangle'_{n,a}}{\big\langle \sum_{u',v' \in S} P_0(u',v') e^{y(u',v',\bbf{u},\bbf{v})} \big\rangle'_{n,a}} = \langle u_1 \rangle_{n+1,a} - \langle u_1 \rangle'_{n,a}\,.
	\end{equation}
	We denote by $\E'$ the expectation with respect to the variables $(Z_{i,n+1})_{1 \leq i \leq n+1}$, $(Z_{n+1,j})_{1 \leq j \leq n+1}$, $Z_{n+1}^{(u)}$, $Z_{n+1}^{(v)}$, $z_{n+1}^{(u)}$ and $z_{n+1}^{(v)}$.
	We first notice that, using Jensen's inequality,
	\begin{align*}
		\E \left[ \left(\left\langle \sum_{u',v' \in S} P_0(u',v') e^{y(u',v',\bbf{u},\bbf{v})} \right\rangle'_{n,a}\right)^{-2} \right]&\leq 
		\E\left\langle \sum_{u',v' \in S} P_0(u',v') e^{-2 y(u',v',\bbf{u},\bbf{v})} \right\rangle'_{n,a}
		\\
		&\leq 
		\E\left\langle \sum_{u',v' \in S} P_0(u',v') \E' \left[e^{-2 y(u',v',\bbf{u},\bbf{v})}\right] \right\rangle'_{n,a} \,.
	\end{align*}
	The bounded support assumption on $P_0$ gives then that, for all $(u',v')\in S^2$ and $\bbf{u},\bbf{v} \in S^n$, $\E' \left[e^{-2 y(u',v',\bbf{u},\bbf{v})}\right] \leq C_1$ for some constant $C_1$. Therefore $\E \left[ \Big(\big\langle \sum_{u',v' \in S} P_0(u',v') e^{y(u',v',\bbf{u},\bbf{v})} \big\rangle'_{n,a}\Big)^{-2} \right] \leq C_1$. The Cauchy-Schwarz inequality applied to the left hand side of equation~\eqref{eq:u_p} shows that it suffices to prove
	$$
	\E_a \E \left[ \left(\left\langle (u_1 - \langle u_1 \rangle'_{n,a}) \sum_{u',v' \in S} P_0(u',v') e^{y(u',v',\bbf{u},\bbf{v})} \right\rangle'_{\!\!n,a}\right)^2 \right] \xrightarrow[n \to \infty]{} 0 
	$$
	to obtain the lemma. Compute
	{\small
		\begin{align*}
			&\E \Big(\Big\langle (u_1 - \langle u_1 \rangle'_{n,a}) \sum_{u',v' \in S} P_0(u',v') e^{y(u',v',\bbf{u},\bbf{v})} \Big\rangle'_{\!\!n,a}\Big)^2 
			\\
			&=\E \left\langle \!\!(u_1^{(1)} - \langle u_1^{(1)} \rangle'_{n,a})(u_1^{(2)} - \langle u_1^{(2)} \rangle'_{n,a}) \!\!\!\! \sum_{u_1',v_1',u_2',v_2'\in S} \!\!\!\!  P_0(u_1',v_1') P_0(u_2',v_2') \exp(y(u_1',v_1',\bbf{u}^{(1)},\bbf{v}^{(1)})+ y(u_2',v_2',\bbf{u}^{(2)},\bbf{v}^{(2)})) \right\rangle'_{\!\!n,a}\\
			&= \E \left\langle \!\frac{1}{n} \sum_{i=1}^n (u^{(1)}_i - \langle u^{(1)}_i \rangle'_{n,a}) (u^{(2)}_i - \langle u^{(2)}_i \rangle'_{n,a}) \!\!\!\! \sum_{u_1',v_1',u_2',v_2'}  \!\!\!\! P_0(u_1',v_1') P_0(u_2',v_2') \exp(y(u_1',v_1',\bbf{u}^{(1)},\bbf{v}^{(1)})+ y(u_2',v_2',\bbf{u}^{(2)},\bbf{v}^{(2)})) \!\right\rangle'_{\!\!n,a}\\
			&\leq 
			\left( \E \left\langle \left( \frac{1}{n} \sum_{i=1}^n (u^{(1)}_i - \langle u^{(1)}_i \rangle'_{n,a}) (u^{(2)}_i - \langle u^{(2)}_i \rangle'_{n,a}) \right)^2 \right\rangle'_{\!\!n,a} \right)^{1/2}
			\\
			&\left( \E \left\langle \sum_{u_1',v_1',u_2',v_2' \in S}  P_0(u_1',v_1') P_0(u_2',v_2') \exp(2y(u_1',v_1',\bbf{u}^{(1)},\bbf{v}^{(1)})+ 2 y(u_2',v_2',\bbf{u}^{(2)},\bbf{v}^{(2)})) \right\rangle'_{\!\!n,a}  \right)^{1/2}
		\end{align*}
	}
	by Cauchy-Schwarz's inequality. The bounded support assumption on $P_0$ implies that, there exists a constant $C_2$ such that, for all $u_1',v_1',u_2',v_2' \in S$ and $\bbf{u}^{(1)},\bbf{v}^{(1)},\bbf{u}^{(2)},\bbf{v}^{(2)} \in S^n$ we have
	\begin{align*}
		\E' \left[\exp\big( 2y(u_1',u_2',\bbf{u}^{(1)},\bbf{v}^{(1)}) +  2y(u_2',v_2',\bbf{u}^{(2)},\bbf{v}^{(2)})\big)\right] \leq C_2 \,.
	\end{align*}
	Thus
	{\small
		$$
		\E_a \E \Big(\Big\langle (u_1 - \langle u_1 \rangle'_{n,a}) \!\!\! \sum_{u',v' \in S} P_0(u',v') e^{y(u',v',\bbf{u},\bbf{v})} \Big\rangle'_{\!\!n,a}\Big)^2 
		\leq C_2^{1/2} 
		\left( \E_a \E \left\langle \left( \frac{1}{n} \sum_{i=1}^n (u^{(1)}_i - \langle u^{(1)}_i \rangle'_{n,a}) (u^{(2)}_i - \langle u^{(2)}_i \rangle'_{n,a}) \right)^2 \right\rangle'_{\!\!n,a} \right)^{1/2}.
		$$
	}
	And the right hand side goes to $0$ as $n \to \infty$ by~\eqref{eq:concentration_prime}.
	This concludes the proof.
\end{proof}

\begin{corollary} \label{cor:overlap_p}
	\begin{align}
		&\E_a \Big| \E \Big\langle \frac{1}{n+1} \sum_{i=1}^{n+1} u^{(1)}_i u^{(2)}_i \Big\rangle_{n+1,a} - \E[Q_u'] \Big| \xrightarrow[n \to \infty]{} 0 \,, \label{eq:overlap_u_p}
		\\
		&\E_a \Big| \E \Big\langle \frac{1}{n+1} \sum_{i=1}^{n+1} v^{(1)}_i v^{(2)}_i \Big\rangle_{n+1,a} - \E[Q_v']  \Big| \xrightarrow[n \to \infty]{} 0 \,. \label{eq:overlap_v_p}
	\end{align}
\end{corollary}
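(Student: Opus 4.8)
\emph{Strategy.} The plan is to collapse both averaged overlaps appearing in \eqref{eq:overlap_u_p}--\eqref{eq:overlap_v_p} to a single-site quantity, and then invoke Lemma~\ref{lem:gibbs_cavity}, which already contains all the substantive work. Two elementary facts carry out the reduction. First, the replicas $(\bbf{u}^{(1)},\bbf{v}^{(1)})$ and $(\bbf{u}^{(2)},\bbf{v}^{(2)})$ are i.i.d.\ conditionally on the observations, so $\langle u_i^{(1)} u_i^{(2)}\rangle_{n+1} = \langle u_i\rangle_{n+1}^2$, and likewise $Q_u' = \frac1n\sum_{i=1}^n\langle u_i\rangle'^2$, which is precisely the content of \eqref{eq:def_q_u}. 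Second, permuting the $n+1$ site indices simultaneously in $(U_i,V_i)$ and in all the noise variables leaves the prior $P_0$, the laws of the noises, and each of $H_{n+1}$, $H_{n+1}^{(s)}$, $H_{n+1}^{\text{(pert)}}$ invariant; hence $(\langle u_i\rangle_{n+1})_{1\le i\le n+1}$ is exchangeable, and similarly $(\langle u_i\rangle')_{1\le i\le n}$ for the $n$-variable system. Therefore $\E[\langle u_i\rangle_{n+1}^2]$ does not depend on $i$, and the same holds for $\E[\langle u_i\rangle'^2]$, so that for every fixed $(a_u,a_v)$
\[
\E\Big\langle \frac{1}{n+1}\sum_{i=1}^{n+1} u_i^{(1)} u_i^{(2)}\Big\rangle_{n+1} = \E\big[\langle u_1\rangle_{n+1}^2\big], \qquad \E[Q_u'] = \E\big[\langle u_1\rangle'^2\big].
\]

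\emph{Conclusion.} It then suffices to compare these two single-site magnetizations. Since $S\subset[-K_0,K_0]$, both $\langle u_1\rangle_{n+1}$ and $\langle u_1\rangle'$ lie in $[-K_0,K_0]$, so $|\langle u_1\rangle_{n+1}^2 - \langle u_1\rangle'^2| \le 2K_0\,|\langle u_1\rangle_{n+1} - \langle u_1\rangle'|$. I would then take the expectation $\E$, then the perturbation average $\E_a$, and use $|\E[X]|\le\E|X|$ to obtain
\[
\E_a\Big|\,\E\Big\langle \frac{1}{n+1}\sum_{i=1}^{n+1} u_i^{(1)} u_i^{(2)}\Big\rangle_{n+1} - \E[Q_u']\,\Big| \;\le\; 2K_0\, \E_a\E\big|\langle u_1\rangle_{n+1} - \langle u_1\rangle'\big| \;\xrightarrow[n \to \infty]{}\; 0
\]
by Lemma~\ref{lem:gibbs_cavity}. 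This proves \eqref{eq:overlap_u_p}, and \eqref{eq:overlap_v_p} follows by the symmetry between $u$ and $v$ in the whole construction.

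\emph{Main obstacle.} I do not expect a genuine difficulty here: the real estimate---that adding one extra site perturbs a given single-site marginal negligibly in $L^1$, which ultimately rests on the overlap concentration of Proposition~\ref{prop:overlap_uv}---is already packaged as Lemma~\ref{lem:gibbs_cavity}. The only points demanding care are (i) the site-exchangeability that turns the averaged overlap into $\E[\langle u_1\rangle_{n+1}^2]$ and $\E[\langle u_1\rangle'^2]$, and (ii) keeping $(a_u,a_v)$ inside the expectation throughout so that Lemma~\ref{lem:gibbs_cavity} applies verbatim. Should one prefer to avoid exchangeability in the first display, an equivalent route is to apply the Nishimori identity (Proposition~\ref{prop:nishimori}) to $\langle\cdot\rangle_{n+1}$ and to $\langle\cdot\rangle'$ so as to rewrite each overlap as the ``planted'' overlap $\E\langle\frac1n\sum_{i\le n} u_i U_i\rangle$, then discard the $(n+1)$-th coordinate at cost $O(1/n)$ (all spins are bounded by $K_0$), and finally bound $\frac1n\sum_{i\le n}\E_a\E|\langle u_i\rangle_{n+1}-\langle u_i\rangle'|$ by Lemma~\ref{lem:gibbs_cavity}, each summand being equal to the $i=1$ term by exchangeability.
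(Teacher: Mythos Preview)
Your proposal is correct and follows essentially the same approach as the paper: reduce the averaged overlaps to the single-site quantities $\E[\langle u_1\rangle_{n+1}^2]$ and $\E[\langle u_1\rangle'^2]$ via exchangeability, then control their difference through the Lipschitz bound $|a^2-b^2|\le 2K_0|a-b|$ together with Lemma~\ref{lem:gibbs_cavity}. The paper's write-up orders the steps slightly differently (it averages over $i\le n$ first and corrects the $\frac{1}{n+1}$ vs.\ $\frac{1}{n}$ discrepancy by an $O(1/n)$ term), but the substance is identical.
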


\begin{proof}
	We only need to prove~\eqref{eq:overlap_u_p},~\eqref{eq:overlap_v_p} is then obtained by symmetry.
	By the preceding lemma $\E_a \E | \langle u_1 \rangle_{n+1,a}^2 - \langle u_1 \rangle_{n,a}^{\prime \, 2} | \to 0$. Thus
	$$
	\E_a \Big| \E \left[ \frac{1}{n} \sum_{i=1}^{n} \langle u_i \rangle_{n+1,a}^2 \right] - \E \left[  \frac{1}{n} \sum_{i=1}^{n} \langle u_i \rangle_{n,a}^{\prime \, 2} \right]  \Big| \xrightarrow[n \to \infty]{} 0.
	$$
	The variables $u_i$ are bounded, so $\big|\frac{1}{n+1} \sum_{i=1}^{n+1} \langle u_i \rangle_{n+1,a}^2 - \frac{1}{n} \sum_{i=1}^{n} \langle u_i \rangle_{n+1,a}^2\big| = O(n^{-1})$, hence the result.
\end{proof}
\\

Let $Z_1,Z_2 \iid \cN(0,1)$ and define for $\gamma_1, \gamma_2 \geq 0$
\begin{equation*}
	F_{\phi}(\gamma_1, \gamma_2) \! = 
	\!\E \!\!\left[ \frac{\sum\limits_{u',v' \in S} P_0(u',v') \phi(u',v',U',V') \exp(\sqrt{\gamma_1} Z_1 u' + \gamma_1 u'U' - \frac{1}{2}\gamma_1 u'^2 + \sqrt{\gamma_2} Z_2 v' + \gamma_2 v'V' - \frac{1}{2} \gamma_2 v'^2)}
	{\sum\limits_{u',v' \in S}P_0(u',v') \exp(\sqrt{\gamma_1} Z_1 u' + \gamma_1 u'U' - \frac{1}{2}\gamma_1 u'^2 + \sqrt{\gamma_2} Z_2 v' + \gamma_2 v'V' - \frac{1}{2} \gamma_2 v'^2)} \right]
\end{equation*}

\begin{proposition} \label{prop:tap_F}
	$$
	\E_a \left| \E \big\langle \phi(u_1,v_1,U_1,V_1) \big\rangle_{n+1,a} - F_{\phi}\left(
		 t \E \big\langle \bbf{v}^{(1)}. \bbf{v}^{(2)} \big\rangle_{n+1,a}  +  q_u
		,
		 t \E \big\langle \bbf{u}^{(1)}. \bbf{u}^{(2)} \big\rangle_{n+1,a} +  q_v
	\right) \right| \xrightarrow[n \to \infty]{} 0 \,.
	$$
\end{proposition}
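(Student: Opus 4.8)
The plan is to feed the cavity estimate of Lemma~\ref{lem:tap} into the observation that the effective field $h_0$ is, conditionally on the ``$n$-site'' Gibbs measure $\langle\cdot\rangle'$, exactly the log-likelihood of the product of the two scalar channels of Section~\ref{sec:scalar_channel}, and then to replace the random overlaps appearing there by their expectations using Proposition~\ref{prop:overlap_uv} and Corollary~\ref{cor:overlap_p}. Since the $n+1$ sites are exchangeable under $\langle\cdot\rangle_{n+1}$, we have $\E\langle\phi(u_1,v_1,U_1,V_1)\rangle_{n+1} = \E\langle\phi(u_{n+1},v_{n+1},U_{n+1},V_{n+1})\rangle_{n+1}$, so by Lemma~\ref{lem:tap} it suffices to estimate, in $\E_a$-mean absolute value, the difference between $\E\big[\frac{\sum_{u',v'\in S}P_0(u',v')\phi(u',v',U',V')e^{h_0(u',v')}}{\sum_{u',v'\in S}P_0(u',v')e^{h_0(u',v')}}\big]$ and the claimed value.

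Freeze $\langle\cdot\rangle'$ (i.e.\ condition on all the data of the first $n$ sites). Inspecting the definition of $h_0$, the variable $u'$ enters only through $W_u\,u' + \gamma_1^{(n)}u'U' - \tfrac12\gamma_1^{(n)}u'^2$, where $W_u = \sum_{j=1}^n\sqrt{\tfrac{\lambda}{n+1}}Z_{n+1,j}\langle v_j\rangle' + \sqrt{\lambda q_u}\,Z^{(u)}_{n+1}$ and $\gamma_1^{(n)} := \tfrac{n\lambda}{n+1}Q_v' + \lambda q_u$ (the latter combining the cavity contribution with the term $g_u$); symmetrically $v'$ enters through $W_v\,v' + \gamma_2^{(n)}v'V' - \tfrac12\gamma_2^{(n)}v'^2$ with $\gamma_2^{(n)} := \tfrac{n\lambda}{n+1}Q_u' + \lambda q_v$. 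The definitions \eqref{eq:def_q_u}--\eqref{eq:def_q_v} yield precisely $\Var(W_u\mid\langle\cdot\rangle') = \tfrac{\lambda}{n+1}\sum_j\langle v_j\rangle'^2 + \lambda q_u = \gamma_1^{(n)}$, and likewise $\Var(W_v\mid\langle\cdot\rangle') = \gamma_2^{(n)}$; moreover $W_u$ and $W_v$ are built from disjoint families of Gaussians, and both are independent of $(U_{n+1},V_{n+1})$. Hence, conditionally on $\langle\cdot\rangle'$, we may write $W_u = \sqrt{\gamma_1^{(n)}}\,Z_1$, $W_v = \sqrt{\gamma_2^{(n)}}\,Z_2$ with $Z_1,Z_2\iid\mathcal N(0,1)$ independent of $(U',V')\sim P_0$, so $h_0$ becomes exactly the exponent in the definition of $F_\phi$ at $(\gamma_1^{(n)},\gamma_2^{(n)})$. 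Taking the conditional expectation given $\langle\cdot\rangle'$ therefore gives $\E\big[\frac{\sum_{u',v'}P_0(u',v')\phi\,e^{h_0}}{\sum_{u',v'}P_0(u',v')e^{h_0}}\big] = \E\big[F_\phi(\gamma_1^{(n)},\gamma_2^{(n)})\big]$.

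It remains to move the expectation inside $F_\phi$ and identify the limiting point. Since $Q_u',Q_v'\in[0,K_0^2]$, the arguments $\gamma_1^{(n)},\gamma_2^{(n)}$ lie in a fixed compact subset of $\R_+^2$, on which $F_\phi$ is Lipschitz: this is checked by differentiating under the integral sign and using Gaussian integration by parts to cancel the $\gamma^{-1/2}$ factors, exactly as for the functions $\psi_{P_X}$ and $F_{P_X}$ in Section~\ref{sec:scalar_channel_proofs}. Next, $Q_v' = \langle\bbf v^{(1)}.\bbf v^{(2)}\rangle'$ concentrates: the Gibbs average of a square dominates the square of the Gibbs average, so $\Var(Q_v') \le \E\big\langle(\bbf v^{(1)}.\bbf v^{(2)} - \E\langle\bbf v^{(1)}.\bbf v^{(2)}\rangle')^2\big\rangle'$, whose $\E_a$-average tends to $0$ by the analogue of Proposition~\ref{prop:overlap_uv} for $\langle\cdot\rangle'$; by Jensen, $\E_a\E|Q_v' - \E Q_v'|\to 0$, and likewise for $Q_u'$. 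Combined with the Lipschitz bound this yields $\E_a\big|\E F_\phi(\gamma_1^{(n)},\gamma_2^{(n)}) - F_\phi(\E\gamma_1^{(n)},\E\gamma_2^{(n)})\big|\to 0$. Finally $\E\gamma_1^{(n)} = \tfrac{n\lambda}{n+1}\E Q_v' + \lambda q_u$; since $\E Q_v'\le K_0^2$ we have $\tfrac{n}{n+1}\E Q_v' = \E Q_v' + O(1/n)$, and Corollary~\ref{cor:overlap_p} gives $\E_a|\E Q_v' - \E\langle\bbf v^{(1)}.\bbf v^{(2)}\rangle_{n+1}|\to 0$, hence $\E_a|\E\gamma_1^{(n)} - (\lambda\E\langle\bbf v^{(1)}.\bbf v^{(2)}\rangle_{n+1} + \lambda q_u)|\to 0$, and symmetrically for $\gamma_2^{(n)}$; one last use of the Lipschitz property of $F_\phi$ and the triangle inequality conclude.

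The main obstacle is the bookkeeping in the second paragraph: one must check that, after adding the contribution of $g_u$, the conditional variance of the cavity Gaussian $W_u$ coincides \emph{exactly} with the coefficient of $u'U'$ in $h_0$, so that $h_0$ is literally the scalar-channel exponent and the identity $\E[\,\cdot\mid\langle\cdot\rangle'] = F_\phi(\gamma_1^{(n)},\gamma_2^{(n)})$ holds with no error term; the mismatch between $\tfrac{n\lambda}{n+1}$ and $\lambda$ is then absorbed harmlessly through the Lipschitz continuity of $F_\phi$. Establishing that Lipschitz continuity near $\gamma=0$ is the only other point needing real (but routine) work, and it is the same computation already used for the scalar-channel functions.
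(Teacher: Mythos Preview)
Your proof is correct and follows essentially the same route as the paper's: identify the law of $(h_0(u',v'))_{u',v'}$ conditionally on the cavity data as that of the scalar-channel exponent at signal-to-noise ratios $\big(\tfrac{n\lambda}{n+1}Q_v'+\lambda q_u,\ \tfrac{n\lambda}{n+1}Q_u'+\lambda q_v\big)$, deduce $\E[\text{ratio}]=\E\,F_\phi(\gamma_1^{(n)},\gamma_2^{(n)})$, then use the Lipschitz continuity of $F_\phi$ on a compact set together with the overlap concentration and Corollary~\ref{cor:overlap_p} to replace the random arguments by the claimed deterministic ones. The paper phrases the first step as an equality in law of Gaussian processes (introducing an auxiliary $h_1$), whereas you phrase it as a conditional-variance computation; these are the same observation.
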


\begin{proof}
	Define for $u',v' \in S$
	\begin{align*}
		h_1(u',v') &= \sqrt{ (\frac{n}{n+1} tQ_v' + q_u)} Z_1 u' +  (\frac{n}{n+1} tQ_v' + q_u) u'U' - \frac{1}{2} (\frac{n}{n+1} tQ_v' + q_u) u'^2 \\
							   &+ \sqrt{ (\frac{n}{n+1} tQ_u' + q_v)} Z_2 v' +  (\frac{n}{n+1} tQ_u' + q_v) v'V' - \frac{1}{2} (\frac{n}{n+1} tQ_u' + q_v) v'^2 \,.
	\end{align*}
	Notice that $(U',V',(h_0(u',v'))_{u',v' \in S}) = (U',V',(h_1(u',v'))_{u',v' \in S})$ in law. Indeed, conditionally to $(U',V')$ and $(\langle u_i \rangle'_{n,a}, \langle v_i \rangle'_{n,a})_{1 \leq i \leq n}$, $(h_0(u',v'))_{u',v' \in S}$ and $(h_1(u',v'))_{u',v' \in S}$ are two Gaussian processes with the same covariance structure.
	Consequently, 
	\begin{align*}
		\E \left[ \frac{\sum_{u',v' \in S} P_0(u',v') \phi(u',v',U',V') \exp(h_0(u',v'))}{\sum_{u',v' \in S} P_0(u',v') \exp(h_0(u',v'))} \right]
		&=
		\E \left[ \frac{\sum_{u',v' \in S} P_0(u',v') \phi(u',v',U',V') \exp(h_1(u',v'))}{\sum_{u',v' \in S} P_0(u',v') \exp(h_1(u',v'))} \right]
		\\
		&=
		\E  \left[ F_{\phi}\left( \frac{n}{n+1} tQ_v' +  q_u,  \frac{n}{n+1} tQ_u' +  q_v \right) \right] \,.
	\end{align*}
	Thus, using Lemma~\ref{lem:tap} we obtain
	$$
	\E_a \left| 
	\E \big\langle \phi(u_1,v_1,U_1,V_1) \big\rangle_{n+1,a} -
	\E \left[ F_{\phi} \left(  \frac{n}{n+1} tQ_v' +  q_u,  \frac{n}{n+1} tQ_u' +  q_v \right) \right]
	\right| \xrightarrow[n \to \infty]{} 0.
	$$
	The function $F_{\phi}$ is $\mathcal{C}^1$ and therefore Lipschitz on the compact set $C = [ q_u - K^2 ,  q_u + K^2 ] \times [ q_v - K^2 ,  q_v + K^2 ]$. We note $L_0$ its Lipschitz constant. $( \frac{n}{n+1} tQ_v' +  q_u ,  \frac{n}{n+1} tQ_u' +  q_v)$ belongs to $C$ with probability $1$, therefore
	\begin{align*}
		&\left| \E \left[ F_{\phi} \left( \frac{n}{n+1} tQ_v' +  q_u,  \frac{n}{n+1} tQ_u' +  q_v \right)\right]
		- F_{\phi} \left( t \E \big\langle \bbf{v}^{(1)}.\bbf{v}^{(2)} \big\rangle_{n+1,a} +  q_u,  t \E \big\langle \bbf{u}^{(1)}.\bbf{u}^{(2)} \big\rangle_{n+1,a} +  q_v \right) \right|
		\\
		& \qquad \qquad \leq L_0  \E \left(
		\left(\frac{n}{n+1} Q_u' - \E \big\langle \bbf{u}^{(1)}. \bbf{u}^{(2)} \big\rangle_{n+1,a}\right)^2
		+\left(\frac{n}{n+1} Q_v' - \E \big\langle \bbf{v}^{(1)}. \bbf{v}^{(2)} \big\rangle_{n+1,a}\right)^2
	\right)^{1/2}.
\end{align*}
The expectation of the right hand side with respect to $a_u$ and $a_v$ goes to zero as $n \to \infty$ because the overlaps under $\langle \cdot \rangle'_{n,a}$ concentrate around their expectations (see Equation~\ref{eq:concentration_prime}), and because of Corollary~\ref{cor:overlap_p}.
This concludes the proof.
\end{proof}
\\

We remark that the function $F_{P_U}$ defined as in~\eqref{eq:def_fx} corresponds to $F_{\phi}$ obtained for the choice $\phi(u_1,v_1,u_2,v_2) = u_1 u_2$.
Similarly, $F_{P_V}$ (defined as in~\eqref{eq:def_fx}) is the function $F_{\phi}$ obtained for $\phi(u_1,v_1,u_2,v_2) = v_1 v_2$. Proposition~\ref{prop:tap_F} implies then that the overlaps satisfy asymptotically two fixed point equations.

\begin{corollary} \label{cor:fixed_point}
	\begin{align*}
		&\E_a \Big| \E \langle \bbf{u}^{(1)}.\bbf{u}^{(2)} \rangle_{n,a} - F_{P_U}(  t \E \langle \bbf{v}^{(1)}.\bbf{v}^{(2)} \rangle_{n,a} +  q_u) \Big| \xrightarrow[n \to \infty]{} 0 \,,\\
		&\E_a \Big| \E \langle \bbf{v}^{(1)}.\bbf{v}^{(2)} \rangle_{n,a} - F_{P_V}(  t \E \langle \bbf{u}^{(1)}.\bbf{u}^{(2)} \rangle_{n,a} +  q_v) \Big| \xrightarrow[n \to \infty]{} 0 \,.
	\end{align*}
\end{corollary}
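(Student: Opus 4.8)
The plan is to read the corollary off Proposition~\ref{prop:tap_F} by plugging in the two bilinear test functions $\phi_u(u_1,v_1,u_2,v_2)=u_1u_2$ and $\phi_v(u_1,v_1,u_2,v_2)=v_1v_2$ and then rewriting both sides of the resulting estimates. First I would record, as already remarked just before the corollary, that $F_{\phi_u}$ coincides with $F_{P_U}$: in the definition of $F_\phi$ the exponential weight factorizes across the $u'$ and $v'$ coordinates, and for $\phi=\phi_u$ the numerator carries the extra factor $u'U'$ which involves $u'$ only, so the $v'$-sums cancel between numerator and denominator; hence $F_{\phi_u}(\gamma_1,\gamma_2)$ depends on $\gamma_1$ alone and equals $F_{P_U}(\gamma_1)$ of \eqref{eq:def_fx}. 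Symmetrically, $\phi_v(u',v',U',V')=v'V'$ gives $F_{\phi_v}(\gamma_1,\gamma_2)=F_{P_V}(\gamma_2)$, which is exactly the argument appearing in the second equation of the corollary.

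Next I would rewrite the left-hand side of Proposition~\ref{prop:tap_F} for $\phi=\phi_u$. Here $\phi_u(u_1,v_1,U_1,V_1)=u_1U_1$, and since $U_1$ is frozen under the Gibbs average, $\E\langle u_1U_1\rangle_{n+1}=\E[U_1\langle u_1\rangle_{n+1}]$; the Nishimori identity (Proposition~\ref{prop:nishimori}) turns this into $\E\langle u_1^{(1)}u_1^{(2)}\rangle_{n+1}=\E[\langle u_1\rangle_{n+1}^2]$. Because $P_0$ is i.i.d.\ and the total Hamiltonian $H_{n+1}+H_{n+1}^{(s)}+H_{n+1}^{\text{(pert)}}$ is invariant in law under simultaneously permuting the coordinates of $\bbf{u},\bbf{v},\bbf{U},\bbf{V}$ together with the corresponding i.i.d.\ Gaussian variables, the quantity $\E[\langle u_i\rangle_{n+1}^2]$ is the same for every $i$, hence equals $\frac{1}{n+1}\sum_{i=1}^{n+1}\E[\langle u_i\rangle_{n+1}^2]=\E\langle\bbf{u}^{(1)}.\bbf{u}^{(2)}\rangle_{n+1}$. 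Substituting this identification and $F_{\phi_u}=F_{P_U}$ into Proposition~\ref{prop:tap_F} gives
\begin{equation*}
	\E_a\Big|\,\E\langle\bbf{u}^{(1)}.\bbf{u}^{(2)}\rangle_{n+1}-F_{P_U}\big(\lambda\,\E\langle\bbf{v}^{(1)}.\bbf{v}^{(2)}\rangle_{n+1}+\lambda q_u\big)\,\Big|\xrightarrow[n\to\infty]{}0 .
\end{equation*}

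Finally I would note that Proposition~\ref{prop:tap_F} holds for every $n\geq 1$, so relabelling $n+1\mapsto n$ (the constants $q_u,q_v$ being fixed) converts the display into the first claimed convergence; the second one follows in the same way with $\phi=\phi_v$, or by the $u\leftrightarrow v$ symmetry of the model, which swaps $q_u\leftrightarrow q_v$, $F_{P_U}\leftrightarrow F_{P_V}$ and the two overlaps. I do not expect a genuine obstacle here: the only step that needs a little care is the identity $\E\langle\phi_u(u_1,v_1,U_1,V_1)\rangle_{n+1}=\E\langle\bbf{u}^{(1)}.\bbf{u}^{(2)}\rangle_{n+1}$, which combines the Nishimori identity with the exchangeability of the $n+1$ coordinates; everything else is a direct substitution into Proposition~\ref{prop:tap_F}.
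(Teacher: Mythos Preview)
Your proposal is correct and is exactly the approach the paper takes: the paper simply remarks, in the paragraph preceding the corollary, that $F_{P_U}$ and $F_{P_V}$ are the functions $F_\phi$ obtained for $\phi(u_1,v_1,u_2,v_2)=u_1u_2$ and $\phi(u_1,v_1,u_2,v_2)=v_1v_2$ respectively, and that the corollary then follows from Proposition~\ref{prop:tap_F}. Your write-up just makes explicit the two routine ingredients (the factorization giving $F_{\phi_u}=F_{P_U}$, and the Nishimori/exchangeability identity $\E\langle u_1U_1\rangle_{n+1}=\E\langle\bbf{u}^{(1)}.\bbf{u}^{(2)}\rangle_{n+1}$) that the paper leaves implicit.
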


\subsection{The lower bound: interpolation method} \label{sec:interpolation}
The lower bound is proved using Guerra's interpolation technique~\cite{guerra2003broken}, originally developed for the SK model. In the context of bipartite spin glasses, this interpolation scheme has been used in~\cite{barra2010replica} under a ``replica symmetric'' assumption.

\begin{proposition} \label{prop:interpolation}
	$$
	\liminf_{n \to \infty} F_n \geq \sup_{(q_1,q_2) \in \Gamma}\mathcal{F}(q_1,q_2) \,.
	$$
\end{proposition}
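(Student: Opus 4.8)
The plan is to run Guerra's interpolation between the matrix channel \eqref{eq:model} (at $t=1$) and a pair of decoupled scalar channels \eqref{eq:scalar_channel} (at $t=0$), and then to control the interpolation remainder — which in the non-symmetric case is not a square but a product of two a priori sign-indefinite fluctuations — by combining overlap concentration with the state-evolution identities of Corollary~\ref{cor:fixed_point} and the monotonicity of $F_{P_U}$.

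Fix $(q_1,q_2)\in\Gamma(\lambda)$. For $t\in[0,1]$ I would introduce the interpolating Hamiltonian obtained by putting the matrix observation channel at signal strength $t\lambda$ (a factor $\sqrt{t\lambda/n}$, with the matching correction $-\tfrac{t\lambda}{2n}u_i^2v_j^2$), a scalar channel \eqref{eq:channel_s_1}--\eqref{eq:channel_s_2} for $\bbf{u}$ at strength $(1-t)\lambda q_1$ and for $\bbf{v}$ at strength $(1-t)\lambda q_2$, and the small perturbation of Section~\ref{sec:appli}; write $\varphi_n(t)$ for the associated free energy and $\langle\cdot\rangle_t$ for the associated Gibbs measure. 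This is a genuine Bayesian posterior, so the Nishimori identity (Proposition~\ref{prop:nishimori}) holds under $\langle\cdot\rangle_t$. At the endpoints $\varphi_n(1)=F_n^{\text{(pert)}}$, and since the model fully decouples at $t=0$, $\varphi_n(0)=\psi_{P_U}(\lambda q_1)+\psi_{P_V}(\lambda q_2)+o(1)$. Differentiating in $t$ exactly as in the computation of $F_n'(\lambda)$ in Section~\ref{sec:rs_formula} (Gaussian integration by parts, then Nishimori) gives $\varphi_n'(t)=\tfrac{\lambda}{2}\E\langle(\bbf{u}.\bbf{U})(\bbf{v}.\bbf{V})\rangle_t-\tfrac{\lambda q_1}{2}\E\langle\bbf{u}.\bbf{U}\rangle_t-\tfrac{\lambda q_2}{2}\E\langle\bbf{v}.\bbf{V}\rangle_t=\tfrac{\lambda}{2}\E\langle(\bbf{u}.\bbf{U}-q_2)(\bbf{v}.\bbf{V}-q_1)\rangle_t-\tfrac{\lambda}{2}q_1q_2$. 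Integrating over $t$, averaging over the perturbation parameters, and invoking Lemma~\ref{lem:free_energy_pert} (and its two-parameter analogue) to pass from $\int F_n^{\text{(pert)}}$ back to $F_n$, I obtain the identity $F_n=\mathcal{F}(\lambda,q_1,q_2)+\tfrac{\lambda}{2}\E_a\int_0^1\E\langle(\bbf{u}.\bbf{U}-q_2)(\bbf{v}.\bbf{V}-q_1)\rangle_t\,dt+o(1)$.

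It then remains to show $\liminf_n\E_a\int_0^1\E\langle(\bbf{u}.\bbf{U}-q_2)(\bbf{v}.\bbf{V}-q_1)\rangle_t\,dt\geq0$. For each fixed $t\in(0,1]$ the interpolated model is of the form treated in Section~\ref{sec:appli}, so Theorem~\ref{th:overlap_concentration}/Proposition~\ref{prop:overlap_uv} gives overlap concentration; writing $\alpha_n(t)=\E\langle\bbf{u}.\bbf{U}\rangle_t$ and $\beta_n(t)=\E\langle\bbf{v}.\bbf{V}\rangle_t$, Cauchy--Schwarz together with Nishimori lets one replace $\bbf{u}.\bbf{U}$ by $\alpha_n(t)$ and $\bbf{v}.\bbf{V}$ by $\beta_n(t)$ at the price of a vanishing error, so the integrand equals $(\alpha_n(t)-q_2)(\beta_n(t)-q_1)+o(1)$. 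Moreover, Corollary~\ref{cor:fixed_point} applied to the interpolated model gives $\alpha_n(t)=F_{P_U}\big(t\lambda\beta_n(t)+(1-t)\lambda q_1\big)+o(1)$; since $(q_1,q_2)\in\Gamma(\lambda)$ forces $q_2=F_{P_U}(\lambda q_1)=F_{P_U}\big(t\lambda q_1+(1-t)\lambda q_1\big)$ and $F_{P_U}$ is non-decreasing, $\alpha_n(t)-q_2$ has, up to $o(1)$, the same sign as $t\lambda(\beta_n(t)-q_1)$, hence the same sign as $\beta_n(t)-q_1$. Therefore $(\alpha_n(t)-q_2)(\beta_n(t)-q_1)\geq-o(1)$ (the overlaps and $q_1,q_2$ being bounded by $K_0^2$), so the remainder is $\geq-o(1)$, giving $\liminf_n F_n\geq\mathcal{F}(\lambda,q_1,q_2)$; taking the supremum over $(q_1,q_2)\in\Gamma(\lambda)$ concludes.

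The main obstacle is this last step. In the symmetric/convex setting the interpolation remainder is automatically a non-negative square and Guerra's scheme closes at once, whereas here it is the integral of the product $(\bbf{u}.\bbf{U}-q_2)(\bbf{v}.\bbf{V}-q_1)$ of two different fluctuations with no manifest sign; what makes it non-negative is exactly the combination of the external-field (state-evolution) identities of Corollary~\ref{cor:fixed_point} with the monotonicity of $F_{P_U}$ and $F_{P_V}$, which forces the two fluctuations to be positively correlated. The technical care consists in checking that overlap concentration and Corollary~\ref{cor:fixed_point} hold for the interpolated model uniformly enough in $t$ to be integrated: the effective scalar strengths behave like $(1-t)\lambda q_1$, which degenerates relative to the matrix strength $t\lambda$ as $t\to0$, but since the integrand is uniformly bounded the contribution of a neighbourhood of $0$ is $O(\varepsilon)$, so one argues on $[\varepsilon,1]$ and lets $\varepsilon\to0$.
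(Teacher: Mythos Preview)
Your proposal is correct and follows essentially the same route as the paper: Guerra interpolation between the matrix channel and decoupled scalar channels, the derivative computed via Gaussian integration by parts and Nishimori to produce the product term $\tfrac{\lambda}{2}\E\langle(\bbf{u}.\bbf{U}-q_2)(\bbf{v}.\bbf{V}-q_1)\rangle_t-\tfrac{\lambda}{2}q_1q_2$, then overlap concentration to decouple the product, and finally the state-evolution identity of Corollary~\ref{cor:fixed_point} combined with the monotonicity of $F_{P_U}$ to force the remainder nonnegative. The only cosmetic differences are that the paper works with replica overlaps $\bbf{u}^{(1)}.\bbf{u}^{(2)}$ rather than planted overlaps $\bbf{u}.\bbf{U}$ (equivalent by Nishimori) and handles the $t$-integration by pointwise Fatou rather than your $[\varepsilon,1]$ truncation.
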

\begin{proof}
	Let $(q_1,q_2) \in \Gamma$.
	Define, for $t \in [0,1]$, the Hamiltonians
	\begin{align*}
		H_{n,t}(\bbf{u},\bbf{v}) &= \sum_{1 \leq i,j \leq n} \sqrt{\frac{ t}{n}} u_i v_j Z_{i,j} + \frac{ t}{n} u_i U_i v_j V_j - \frac{ t}{2n} u_i^2 v_j^2 \,, \\
		H_{n,t}^{(s)}(\bbf{u},\bbf{v}) &= \sum_{i=1}^n \sqrt{ (1-t) q_2} Z^{(u)}_i u_i +  (1-t) q_2 u_i U_i - \frac{ (1-t) q_2}{2} u_i^2 \\
																				 &+ \sum_{i=1}^n \sqrt{ (1-t)q_1} Z^{(v)}_i v_i +  (1-t) q_1 v_i V_i - \frac{ (1-t) q_1}{2} v_i^2 \,,
	\end{align*}
	for $\bbf{u},\bbf{v} \in S^n$, and $H_{n,t}^{\text{(tot)}} = H_{n,t} + H_{n,t}^{(s)} + H_n^{\text{(pert)}}$, where $H_n^{\text{(pert)}}$ is defined  in Section~\ref{sec:overlap_concentration_1}. Let $\langle \cdot \rangle_t$ denotes the Gibbs measure corresponding to the Hamiltonian $H_{n,t}^{\text{(tot)}}$. Define
	$$
	\phi: t \in [0,1] \mapsto \frac{1}{n} \E_a \E \left[ \log \left( \sum_{\bbf{u},\bbf{v} \in S^n} P^{\otimes n}_0(\bbf{u},\bbf{v}) \exp(H_{n,t}^{\text{(tot)}}(\bbf{u},\bbf{v})) \right) \right] .
	$$
	Let $t \in (0,1)$ be fixed. Using Gaussian integration by parts and the Nishimori identity as we did to prove~\eqref{eq:f_n_p} we compute
	\begin{align}
		\phi'(t) &= 
		\E_a 
		\left[
			\frac{1}{2} \E \Big\langle 
				(\bbf{u}^{(1)}.\bbf{u}^{(2)})(\bbf{v}^{(1)}.\bbf{v}^{(2)}) - q_2 (\bbf{u}^{(1)}.\bbf{u}^{(2)}) - q_1 (\bbf{v}^{(1)}.\bbf{v}^{(2)})
			\Big\rangle_t
		\right] \nonumber
		\\
		&=
		\E_a 
		\left[
			\frac{1}{2} \E \Big\langle 
				(\bbf{u}^{(1)}.\bbf{u}^{(2)} - q_1)(\bbf{v}^{(1)}.\bbf{v}^{(2)} - q_2)
		\Big\rangle_t \right] \nonumber
		- \frac{1}{2} q_2 q_1 \\
		&=
		\E_a \left[ \frac{1}{2} 
		(\E \langle \bbf{u}^{(1)}.\bbf{u}^{(2)} \rangle_t - q_1)(\E \langle \bbf{v}^{(1)}.\bbf{v}^{(2)} \rangle_t - q_2) \right]
		- \frac{1}{2} q_2 q_1 + o_{n}(1) \,, \label{eq:phi_p}
	\end{align}
	where $o_n(1)$ denotes a quantity that goes to $0$ as $n \to \infty$, because of the concentration of the overlaps (Proposition~\ref{prop:overlap_uv}). We will show that the first term of the right-hand side of~\eqref{eq:phi_p} is asymptotically non-negative. This will follow from the fact that the overlaps $\E \langle \bbf{u}^{(1)}.\bbf{u}^{(2)} \rangle_t$ and $\E \langle \bbf{v}^{(1)}.\bbf{v}^{(2)} \rangle_t$ verify the fixed points equations of Corollary~\ref{cor:fixed_point}. 
	Since $(q_1,q_2) \in \Gamma$, we have $q_1 = F_{P_U}( q_2)$. By Corollary~\ref{cor:fixed_point}
	\begin{align*}
		&\E_a \Big| 
		\E \langle \bbf{u}^{(1)}.\bbf{u}^{(2)} \rangle_t 
		- F_{P_U}\left(t  \E \langle \bbf{v}^{(1)} \bbf{v}^{(2)}\rangle_t + (1-t)  q_2\right) 
			\Big| 
			\xrightarrow[n \to \infty]{} 0 \,.
		\end{align*}
	Thus
	\begin{align*}
		&\E_a \left[ (\E \langle \bbf{u}^{(1)}.\bbf{u}^{(2)} \rangle_t - q_1)(\E \langle \bbf{v}^{(1)}.\bbf{v}^{(2)} \rangle_t - q_2) \right] \\
		&\qquad\qquad = \E_a \left[
		\left(F_{P_U}\left(  t \E \langle \bbf{v}^{(1)}.\bbf{v}^{(2)} \rangle_t + (1-t)  q_2 \right) - F_{P_U}( q_2) \right)
		\left(\E \langle \bbf{v}^{(1)}.\bbf{v}^{(2)} \rangle_t- q_2\right)
	\right]
	+ o_{n}(1)
	\\
	&\qquad\qquad \geq o_n(1) \,,
\end{align*}
because $F_{P_U}$ is non-decreasing (Lemma~\ref{lem:general_convex}). Consequently, by Equation~\eqref{eq:phi_p}, $\liminf_{n \to \infty} \phi'(t) \geq - \frac{1}{2} q_2 q_1$.
Using Fatou's lemma
\begin{equation} \label{eq:fatou1}
	\liminf_{n \to \infty} \big( \phi(1) - \phi(0) \big) = \liminf_{n \to \infty} \int_0^1 \phi'(t) dt \geq \int_0^1 \liminf_{n \to \infty} \phi'(t) dt \geq - \frac{1}{2} q_2 q_1\,.
\end{equation}
We have $\phi(1) = \frac{1}{n} \E_a \E \log \left( \sum_{\bbf{u},\bbf{v} \in S^n} P_0^{\otimes n}(\bbf{u},\bbf{v}) e^{H_n^{\text{(tot)}}(\bbf{u},\bbf{v})} \right)$. Lemma~\ref{lem:free_energy_pert} gives us then that $|\phi(1) - F_n | \xrightarrow[n \to \infty]{} 0$, thus
$\liminf\limits_{n \to \infty} \phi(1) = \liminf\limits_{n \to \infty} F_n$. For the same reasons than in the proof of Lemma~\ref{lem:free_energy_pert}, the perturbation term inside $\phi(0)$ will be, in the limit, negligible:
\begin{align*}
	&\limsup_{n \to \infty} \phi(0) 
	\\
	&= \limsup_{n \to \infty} \frac{1}{n} \E \log \left( \sum_{\bbf{u},\bbf{v} \in S^n} P_0^{\otimes n}(\bbf{u},\bbf{v}) \exp(\sum_{i=1}^n \sqrt{  q_2} Z^{(u)}_i u_i +   q_2 u_i U_i - \frac{  q_2}{2} u_i^2 
	+ \sqrt{ q_1} Z^{(v)}_i v_i +   q_1 v_i V_i - \frac{  q_1}{2} v_i^2 )\right)
	\\
	&= \E \log \left( \sum_{u,v \in S} P_0(u,v) \exp(\sqrt{  q_2} Z_1 u +   q_2 u U - \frac{  q_2}{2} u^2 
	+ \sqrt{ q_1} Z_2 v +   q_1 v V - \frac{  q_1}{2} v^2 )\right)
	= \mathcal{F}(q_1,q_2) + \frac{1}{2} q_1 q_2 ,
\end{align*}
and we conclude using equation~\eqref{eq:fatou1}.
\end{proof}

\subsection{Aizenman - Sims - Starr scheme} \label{sec:aizenman}

We prove in this section an upper bound on the limit of the free energy.
We consider the observation system (\ref{eq:channel_m}-\ref{eq:channel_s}-\ref{eq:perturbation}) in the special case $q_u=q_v=0$ (so $H_n^{(s)} =0$) and $t=1$.

\begin{proposition} \label{prop:aizenman}
	\begin{equation} \label{eq:upper_bound}
		\limsup_{n \to \infty} F_n \leq \limsup_{n \to \infty} \E_a \left[ \mathcal{F}\left(\E \langle \bbf{u}^{(1)}.\bbf{u}^{(2)} \rangle_{n,a}, \E \langle \bbf{v}^{(1)}.\bbf{v}^{(2)} \rangle_{n,a} \right) \right].
	\end{equation}
\end{proposition}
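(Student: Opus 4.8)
The plan is to run the Aizenman--Sims--Starr cavity scheme. By Lemma~\ref{lem:free_energy_pert} (averaged over $a = (a_u,a_v)$) I may replace $F_n$ by $\tfrac1n b_n$ where $b_n := \E_a \E \log \mathcal{Z}_n$ and $\mathcal{Z}_n = \sum_{\bbf{u},\bbf{v} \in S^n} P_0(\bbf{u},\bbf{v}) e^{H_n^{\text{(tot)}}(\bbf{u},\bbf{v})}$ is the partition function of $\log P_0 + H_n^{\text{(tot)}}$ (with $q_u = q_v = 0$). Since $b_n/n$ is the Cesàro average of the increments $b_{n+1} - b_n$, one has $\limsup_n F_n = \limsup_n b_n/n \le \limsup_n (b_{n+1} - b_n)$. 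Inserting the intermediate partition function $\mathcal{Z}_n'$ of $\log P_0 + H_n^{\text{(tot)}\prime}$ (the same $n$-spin model but with the $\lambda/(n+1)$ normalization and perturbation level $s_{n+1}$, as in Section~\ref{sec:tap}), I split $b_{n+1} - b_n = A_n^{(1)} + A_n^{(2)}$ with $A_n^{(1)} = \E_a \E \log(\mathcal{Z}_{n+1}/\mathcal{Z}_n')$ the ``added spin pair'' term and $A_n^{(2)} = \E_a \E \log(\mathcal{Z}_n'/\mathcal{Z}_n)$ the ``rescaling'' term.

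For $A_n^{(2)}$ I interpolate the matrix coupling, replacing $\lambda/n$ by $\lambda s$ for $s \in [\tfrac{1}{n+1},\tfrac{1}{n}]$ (changing the perturbation level from $s_{n+1}$ to $s_n$ costs $o(1)$ since $n|s_n - s_{n+1}| \to 0$). Gaussian integration by parts followed by the Nishimori identity gives, exactly as in the computation leading to \eqref{eq:f_n_p}, $\tfrac{d}{ds} \E \log Z_n(s) = \tfrac{\lambda n^2}{2} \E \big\langle (\bbf{u}.\bbf{U})(\bbf{v}.\bbf{V}) \big\rangle_s$. Integrating over the interval of length $\tfrac{1}{n(n+1)}$, using overlap concentration (Proposition~\ref{prop:overlap_uv}, valid uniformly over this bounded range of couplings) together with the Nishimori identity to factorize $\E \langle (\bbf{u}.\bbf{U})(\bbf{v}.\bbf{V}) \rangle \approx \E \langle \bbf{u}^{(1)}.\bbf{u}^{(2)} \rangle \, \E \langle \bbf{v}^{(1)}.\bbf{v}^{(2)} \rangle$, and noting that the coupling change is $O(1/n)$, yields $A_n^{(2)} = -\tfrac{\lambda}{2} \E_a \big[ \E \langle \bbf{u}^{(1)}.\bbf{u}^{(2)} \rangle_n \, \E \langle \bbf{v}^{(1)}.\bbf{v}^{(2)} \rangle_n \big] + o(1)$.

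The term $A_n^{(1)}$ is the main work. With the cavity notation of Section~\ref{sec:tap} and $q_u = q_v = 0$ (so $g_u = g_v = 0$), $A_n^{(1)} = \E_a \E \log \big\langle \sum_{u',v' \in S} P_0(u',v') e^{y(u',v',\bbf{u},\bbf{v})} \big\rangle'$. Arguing as in Lemmas~\ref{lem:l2} and \ref{lem:tap} (with $\phi \equiv 1$), overlap concentration lets me replace $\big\langle \sum_{u',v'} P_0 e^{y} \big\rangle'$ by $\sum_{u',v'} P_0(u',v') e^{h_0(u',v')}$ up to $o(1)$ inside the logarithm; the extra point compared to Lemma~\ref{lem:tap} is that $\log$ is not Lipschitz near $0$, which is handled by the $L^2$-bounds on inverse partition functions already used there. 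Since $\big( U',V',(h_0(u',v'))_{u',v'} \big)$ has the same law as the Gaussian family $\big( U',V',(h_1(u',v'))_{u',v'} \big)$ (the covariance argument from the proof of Proposition~\ref{prop:tap_F}) and $P_0 = P_U \otimes P_V$, the sum factorizes and $\E \log \big( \sum_{u',v'} P_0 e^{h_0} \big) = \E \big[ \psi_{P_U}(\tfrac{\lambda n}{n+1} Q_v') \big] + \E \big[ \psi_{P_V}(\tfrac{\lambda n}{n+1} Q_u') \big]$. Overlap concentration (so $Q_u'$, $Q_v'$ concentrate around $\E \langle \bbf{u}^{(1)}.\bbf{u}^{(2)} \rangle'$, $\E \langle \bbf{v}^{(1)}.\bbf{v}^{(2)} \rangle'$) and the Lipschitz continuity of $\psi_{P_U}$, $\psi_{P_V}$ (their derivatives $\tfrac12 F_{P_U}$, $\tfrac12 F_{P_V}$ being bounded, Lemma~\ref{lem:general_convex}) then replace the arguments by $\lambda \E \langle \bbf{v}^{(1)}.\bbf{v}^{(2)} \rangle_n$, $\lambda \E \langle \bbf{u}^{(1)}.\bbf{u}^{(2)} \rangle_n$, giving $A_n^{(1)} = \E_a \big[ \psi_{P_U}(\lambda \E \langle \bbf{v}^{(1)}.\bbf{v}^{(2)} \rangle_n) + \psi_{P_V}(\lambda \E \langle \bbf{u}^{(1)}.\bbf{u}^{(2)} \rangle_n) \big] + o(1)$.

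Adding the two contributions, $b_{n+1} - b_n = \E_a \, \mathcal{F}\big( \lambda, \E \langle \bbf{v}^{(1)}.\bbf{v}^{(2)} \rangle_n, \E \langle \bbf{u}^{(1)}.\bbf{u}^{(2)} \rangle_n \big) + o(1)$, and taking $\limsup$ in $n$ proves \eqref{eq:upper_bound}. I expect the main obstacle to be the $A_n^{(1)}$ step: controlling $\log$ of the cavity partition function, which can be arbitrarily close to $0$, and the bookkeeping needed to check that all the slightly different Gibbs measures appearing along the way ($\langle \cdot \rangle_n$, $\langle \cdot \rangle'$, and the interpolating $\langle \cdot \rangle_s$) have asymptotically equal overlap expectations --- in particular that passing from the $\lambda/n$ to the $\lambda/(n+1)$ normalization costs only $o(1)$ in the relevant overlaps, which follows from a uniform-in-$n$ Lipschitz estimate in the matrix signal-to-noise ratio obtained by Gaussian integration by parts.
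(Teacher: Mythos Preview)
Your proof is correct and follows essentially the same Aizenman--Sims--Starr scheme as the paper: Ces\`aro on the perturbed free energy, then a two-term splitting via the intermediate $n$-spin system $H_n^{\text{(tot)}\prime}$, with the added-spin term $A_n^{(1)}$ handled exactly as in the paper (Lemma~\ref{lem:l2} with $\phi\equiv 1$, the $L^2$ control of inverse partition functions, equality in law of $(h_0)$ and $(h_1)$, and Lipschitz continuity of $\psi_{P_U},\psi_{P_V}$).

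The one place you differ is the rescaling term $A_n^{(2)}$. You interpolate the matrix coupling $\lambda s$ over $s\in[\tfrac{1}{n+1},\tfrac{1}{n}]$ and integrate the derivative $\tfrac{\lambda n^2}{2}\E\langle(\bbf u.\bbf U)(\bbf v.\bbf V)\rangle_s$, which is fine but forces you to check overlap concentration uniformly over a one-parameter family of Gibbs measures $\langle\cdot\rangle_s$. The paper instead introduces an \emph{independent} Gaussian field $\tilde Z$ and writes $H_n^{\text{(tot)}}=H_n^{\text{(tot)}\prime}+w+w^{\text{(pert)}}$ in law, so that the rescaling term becomes $\E\log\langle e^{w}\rangle'$; since $w$ depends only on $\tilde Z$, one can integrate out $\tilde Z$ first and obtain an explicit function of the overlaps under the single measure $\langle\cdot\rangle'$. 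This avoids any uniform-in-$s$ argument and puts both terms on exactly the same footing (both are $\E\log$ of something whose $\E'$-moments are computable). Your route works, but the independent-noise trick is cleaner bookkeeping and is worth knowing.
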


By definition of $y$ (Equation~\ref{eq:def_y}), we have $H_{n+1}^{\text{(tot)}} = y + H_n^{\text{(tot)} \prime}$. One can also express $H_n^{\text{(tot)}}$ in term of $H_n^{\text{(tot)}\prime}$.
Let $(\tilde{Z}_{i,j})_{i,j} \iid \mathcal{N}(0,1)$ and $(\tilde{z}^{(u)}_i)_{1 \leq i \leq n}, (\tilde{z}^{(v)}_i)_{1 \leq i \leq n} \iid \mathcal{N}(0,1)$ independent standard Gaussian random variables, independent of everything else. Define
$$
w(\bbf{u},\bbf{v}) = \sum_{1 \leq i,j \leq n} \frac{1}{\sqrt{n(n+1)}} \tilde{Z}_{i,j} u_i v_j + \frac{1}{n(n+1)} u_i U_i v_j V_j - \frac{1}{2 n(n+1)} u_i^2 v_j^2
$$
and
\begin{align*}
	w^{\text{(pert)}}(\bbf{u},\bbf{v}) &= \sum_{i=1}^n \sqrt{s_n - s_{n+1}} a_u \tilde{z}^{(u)}_i u_i + (s_n - s_{n+1}) a_u^2 u_i U_i - \frac{(s_n-s_{n+1}) a_u^2}{2} u_i^2 \\
																						  &+ \sum_{i=1}^n \sqrt{s_n - s_{n+1}} a_v \tilde{z}^{(v)}_i v_i + (s_n - s_{n+1}) a_v^2 v_i V_i - \frac{(s_n-s_{n+1}) a_v^2}{2} v_i^2 \,.
\end{align*}
Then $H_n^{\text{(tot)}}(\bbf{u},\bbf{v}) = w(\bbf{u},\bbf{v}) + w^{\text{(pert)}}(\bbf{u},\bbf{v}) + H_n^{\text{(tot)}\prime}(\bbf{u},\bbf{v})$ in law. Define the perturbed free energy
$$
F_n^{\text{(pert)}} = \frac{1}{n} \E \left[
	\log \left(
		\sum_{\bbf{u},\bbf{v} \in S^n} P_0^{\otimes n}(\bbf{u},\bbf{v}) \exp(H_n^{\text{(tot)}}(\bbf{u},\bbf{v}))	
	\right)
\right]
$$
and $A_n^{\text{(pert)}} = (n+1) F_{n+1}^{\text{(pert)}} - n F_n^{\text{(pert)}}$ so that, with the convention $F_0^{\text{(pert)}} = 0$,
$
F_n^{\text{(pert)}} = \frac{1}{n} \sum_{k=0}^{n-1} A_k^{\text{(pert)}}.
$
Lemma~\ref{lem:free_energy_pert} guarantees that $\E_a | F_n - F_n^{\text{(pert)}} | \xrightarrow[n \to \infty]{} 0$, because $s_n = n^{-1/4} \to 0$.
We thus obtain
$$
\limsup_{n \to \infty} F_n = \limsup_{n \to \infty} \E_a F_n^{\text{(pert)}} \leq \limsup_{n \to \infty} \E_a A_n^{\text{(pert)}}.
$$
It remains therefore to compute the limit of $A_n^{\text{(pert)}}$.
\begin{align*}
	A_n^{\text{(pert)}} &= (n+1) F_{n+1}^{\text{(pert)}} - n F_n^{\text{(pert)}}
	\\
	&=
	\E \left[
		\log \left(
			\sum_{\bbf{u},\bbf{v} \in S^n} P_0^{\otimes n}(\bbf{u},\bbf{v}) \exp(H_{n+1}^{\text{(tot)}}(\bbf{u},\bbf{v}))	
		\right)
	\right]
	-
	\E \left[
		\log \left(
			\sum_{\bbf{u},\bbf{v} \in S^n} P_0^{\otimes n}(\bbf{u},\bbf{v}) \exp(H_n^{\text{(tot)}}(\bbf{u},\bbf{v}))	
		\right)
	\right]
	\\
	&=
	\E 
	\log \left(
		\sum_{\bbf{u},\bbf{v} \in S^n} P_0^{\otimes n}(\bbf{u},\bbf{v}) \left( \sum_{u',v'\in S} P_0(u',v') e^{y(u',v',\bbf{u},\bbf{v})} \right) \exp(H_{n}^{\text{(tot)}\prime}(\bbf{u},\bbf{v}))	
	\right)
	\\
	& \qquad -
	\E 
	\log \left(
		\sum_{\bbf{u},\bbf{v} \in S^n} P_0^{\otimes n}(\bbf{u},\bbf{v}) \exp\big(w(\bbf{u},\bbf{v}) + w^{\text{(pert)}}(\bbf{u},\bbf{v})\big) \exp(H_n^{\text{(tot)}\prime}(\bbf{u},\bbf{v}))	
	\right)
	\\
	&= \E \log \left\langle \sum_{u',v' \in S} P_0(u',v') \exp(y(u',v',\bbf{u},\bbf{v})) \right\rangle'_{\!\!n,a}
	- \E \log \left\langle \exp(w(\bbf{u},\bbf{v}) + w^{\text{(pert)}}(\bbf{u},\bbf{v})) \right\rangle'_{n,a}
	\\
	&= \E \log \left\langle \sum_{u',v' \in S} P_0(u',v') \exp(y(u',v',\bbf{u},\bbf{v})) \right\rangle'_{\!\!n,a}
	- \E \log \left\langle \exp(w(\bbf{u},\bbf{v})) \right\rangle'_{n,a} + o(1) \,,
\end{align*}
because the contribution of $w^{\text{(pert)}}$ is negligible, for the same reasons than in the proof of Lemma~\ref{lem:free_energy_pert}. 
Indeed, since $s_n = n^{-1/4}$, $s_{n+1} - s_n = o(n^{-1})$.
Proposition~\ref{prop:aizenman} follows then from the following lemma.

\begin{lemma}
	$$
	\E_a \left| A_n^{\text{(pert)}} - \mathcal{F}\left(\E \langle \bbf{u}^{(1)}.\bbf{u}^{(2)} \rangle_{n,a} , \E \langle \bbf{v}^{(1)}.\bbf{v}^{(2)} \rangle_{n,a} \right) \right|
	\xrightarrow[n \to \infty]{} 0 \,.
	$$
\end{lemma}

\begin{proof}
	We define
	\begin{align*}
		B_1 &=  \left\langle \sum_{u',v' \in S} P_0(u',v') \exp(y(u',v',\bbf{u},\bbf{v})) \right\rangle'_{n,a} 
		\quad \text{and} \quad
	B_2 = \sum_{u',v' \in S} P_0(u',v') \exp(h_0(u',v')) \,.
	\end{align*}
	Applying Lemma~\ref{lem:l2} with $\phi = 1$, we have $\E_a \E (B_1 - B_2 )^2 \xrightarrow[n \to \infty]{} 0$.
	\begin{lemma}
		$$
		\E_a | \E \log(B_1) - \E \log(B_2) | \xrightarrow[n \to \infty]{} 0 \,.
		$$
	\end{lemma}
	\begin{proof}
		One have $| \log B_1 - \log B_2 | \leq \max(B_1^{-1},B_2^{-1}) | B_1 - B_2|$. So that
		$$
		\E | \log B_1 - \log B_2 | \leq \left( \E [B_1^{-2}+B_2^{-2}]  \E \left[ (B_1 - B_2)^2 \right] \right)^{1/2} \,.
		$$
		Because of the bounded support assumption on $P_0$, $\E [B_1^{-2}+B_2^{-2}]$ is bounded by a constant $C_1$. Thus
		$$
		\E_a | \E \log(B_1) - \E \log(B_2) | 
		\leq \left( C_1 \E_a \E \left[ (B_1 - B_2)^2 \right] \right)^{1/2}
		\xrightarrow[n \to \infty]{} 0 \,.
		$$
	\end{proof}
	\\

	Let $Z_1,Z_2$ be independent standard Gaussian random variables, independent of everything else. 
	Then the processes $(h_0(u',v'))_{u',v' \in S}$ and
	$$
	\left( u'  \sqrt{\frac{ n Q_v'}{n+1}} Z_1 + u' U' \frac{n }{n+1} Q_v' - u'^2 \frac{n }{2(n+1)} Q_v'
		+ v'  \sqrt{\frac{ n Q_u'}{n+1}} Z_2 + v' V' \frac{n }{n+1} Q_u' - v'^2 \frac{n }{2(n+1)} Q_u'
	\right)_{u',v'\in S}
	$$
	have the same law. Indeed, conditionally on $(U',V')$ and $(\langle u_i \rangle'_{n,a}, \langle v_i \rangle'_{n,a})_{1 \leq i \leq n}$ both are Gaussian processes with the same covariance structure. Consequently
	$$
\E \log(B_2) = \E \left[ \psi_{P_U} \left(\frac{ n Q_v'}{n+1}\right) + \psi_{P_V}\left( \frac{ n Q_u'}{n+1} \right) \right].
	$$
	$\psi_{P_U}$ and $\psi_{P_V}$ respectively $\frac{1}{2}\E_{P_U}[U^2]$ and $\frac{1}{2} \E_{P_V}[V^2]$-Lipschitz (see Lemma~\ref{lem:general_convex} in Appendix~\ref{sec:scalar_channel_proofs}), so using~\eqref{eq:concentration_prime} and Corollary~\ref{cor:overlap_p} we get
	$$
	\E_a \left| \E \log(B_2) -  \left( \psi_{P_U} \left( \E \langle \bbf{v}^{(1)}.\bbf{v}^{(2)} \rangle_{n,a} \right) + \psi_{P_V}\left(  \E \langle \bbf{u}^{(1)}.\bbf{u}^{(2)} \rangle_{n,a} \right) \right) \right|
	\xrightarrow[n \to \infty]{} 0
	$$
	and therefore $\displaystyle \E_a \left| \E \log(B_1) -  \left( \psi_{P_U} \left( \E \langle \bbf{v}^{(1)}.\bbf{v}^{(2)} \rangle_{n,a} \right) + \psi_{P_V}\left(  \E \langle \bbf{u}^{(1)}.\bbf{u}^{(2)} \rangle_{n,a} \right) \right) \right| \xrightarrow[n \to \infty]{} 0 \,.$
	Using the same kind of arguments, one show that
	$$
	\E_a \left| \E \log\big\langle \exp(w(\bbf{u},\bbf{v})) \big\rangle'_{n,a} - \frac{1}{2} \E \langle \bbf{v}^{(1)}.\bbf{v}^{(2)} \rangle_{n,a} \E \langle \bbf{u}^{(1)}.\bbf{u}^{(2)} \rangle_{n,a} \right|
	\xrightarrow[n \to \infty]{} 0 \,.
	$$
	We conclude: $\E_a \left| A_n^{\text{(pert)}} - \mathcal{F}\left(\E \langle \bbf{u}^{(1)}.\bbf{u}^{(2)} \rangle_{n,a} , \E \langle \bbf{v}^{(1)}.\bbf{v}^{(2)} \rangle_{n,a} \right) \right| \xrightarrow[n \to \infty]{} 0 \,.$
\end{proof}

\subsection{The final part} \label{sec:final_part}

We conclude the proof of Theorem~\ref{th:rs_formula} in this section, using the results of the previous sections.
We still consider the observation system (\ref{eq:channel_m}-\ref{eq:channel_s}-\ref{eq:perturbation}) with $q_u=q_v=0$ (so $H_n^{(s)} =0$) and $t=1$.

Let $(n_k)_{k \in \N} \in \N^{\N}$ be an extraction along which the superior limit of $(F_n)_n$ is achieved.
$\E \langle \bbf{u}^{(1)}.\bbf{u}^{(2)} \rangle_{n,a}$ and $\E \langle \bbf{v}^{(1)}.\bbf{v}^{(2)} \rangle_{n,a}$ are $(a_u,a_v)$-measurable bounded random variables.
Without loss of generalities we can assume that $(\E \langle \bbf{u}^{(1)}.\bbf{u}^{(2)} \rangle_{n_k,a})_{k \in \N}$ and $(\E \langle \bbf{v}^{(1)}.\bbf{v}^{(2)} \rangle_{n_k,a})_{k \in \N}$ are converging in law along this subsequence (if not, Prokhorov's theorem allows us to find another extraction of $(n_k)$ along with these quantities converges).
Denote by $Q_u^{\infty}$ and $Q_v^{\infty}$ their respective limits. The functions $\mathcal{F}$, $F_{P_U}$, $F_{P_V}$ are continuous and $\E \langle \bbf{u}^{(1)}.\bbf{u}^{(2)} \rangle_{n,a}$ and $\E \langle \bbf{v}^{(1)}.\bbf{v}^{(2)} \rangle_{n,a}$ are bounded, thus by weak convergence, Proposition~\ref{prop:aizenman} and Corollary~\ref{cor:fixed_point} (applied with $q_u = q_v =0$ and $t=1$) give
\begin{align}
	&\E_a \left| Q_u^{\infty} - F_{P_U}(  Q_v^{\infty}) \right| =0 \,, \label{eq:final_lim1} \\
	&\E_a \left| Q_v^{\infty}- F_{P_V}( Q_u^{\infty}) \right| = 0\,, \label{eq:final_lim2} \\
	&\limsup_{n \to \infty} F_n \leq \E_a \mathcal{F}\left(Q_u^{\infty},Q_v^{\infty} \right)\,. \label{eq:final_lim3} 
\end{align}
Equations~\eqref{eq:final_lim1} and~\eqref{eq:final_lim2} give that $(Q_u^{\infty},Q_v^{\infty}) \in \Gamma$ with probability $1$. Therefore, we have
$$
\mathcal{F}\left(Q_u^{\infty},Q_v^{\infty} \right) \leq \sup_{(q_1,q_2) \in \Gamma} \mathcal{F}(q_1,q_2) \,,
$$
almost surely. We conclude, using equation~\eqref{eq:final_lim3} that $\limsup F_n \leq \sup_{\Gamma} \mathcal{F}$, which proves (combined with Proposition~\ref{prop:interpolation}) the first expression for the limit of $F_n$. Theorem~\ref{th:rs_formula} follows then from Proposition~\ref{prop:min_max} in Appendix~\ref{sec:min_max}.

\section{Proof of Theorem~\ref{th:rs_formula_multidim}} \label{sec:proof_rs_multidim}

This section is dedicated to the proof of Theorem~\ref{th:rs_formula_multidim}. It extends the arguments presented in Section~\ref{sec:proof_rs} to the multidimensional case. The ingredients of the proof are the same: we will therefore often refer to the unidimensional proof. As mentioned at the beginning of Section~\ref{sec:proof_rs}, we can restrict ourselves to the case where $P_U$ and $P_V$ have a finite support $S \subset \R^k$, and where $\lambda = 1$, $n=m$ ($\alpha = 1$). We will therefore remove the dependencies in $\lambda,\alpha$. We will write as before $P_0 = P_U \otimes P_V$.

In the multidimensional case the overlaps becomes $k \times k$ matrices. For $\bbf{x}^{(1)},\bbf{x}^{(2)} \in (\R^k)^N$ we write
$$
\bbf{x}^{(1)}.\bbf{x}^{(2)} = \frac{1}{N} \sum_{i=1}^N \bbf{x}_i^{(1)} (\bbf{x}_i^{(2)})^{\intercal}
\in M_{k,k}(\R) \,.
$$
$\| \cdot \|$ will now denote the norm over $M_{k,k}(\R)$ defined as $\| \bbf{A} \| = \sqrt{\Tr(\bbf{A}^{\intercal}\bbf{A})}$.
\subsection{Adding a small perturbation} \label{sec:small_perturbation}
The major difference with the proof presented in Section~\ref{sec:proof_rs} is the kind of perturbation we will add to our observation system, in order to obtain concentration results for the overlaps. Instead of adding low-signal Gaussian scalar channels (see~\eqref{eq:pert_scalar}), we will rather reveal each variable $(\bbf{U}_i, \bbf{V}_i)$ with small probability. Lemma 3.1 from~\cite{andrea2008estimating} shows that this kind of perturbation forces the correlations to decay. This approach has already been used in~\cite{lelarge2016fundamental} and~\cite{coja2016information} to obtain overlaps concentration.
\\

Let $\epsilon\in [0,1]$, and suppose we have access to the additional information, for $1 \leq i \leq n$
\begin{equation} \label{eq:extra_info}
	\bbf{Y}'_i =
	\begin{cases}
		(\bbf{U}_i,\bbf{V}_i) &\text{if } L_i = 1 \,, \\
		(*,*) &\text{if } L_i = 0 \,,
	\end{cases}
\end{equation}
where $L_i \iid \Ber(\epsilon)$ and $*$ is a value that does not belong to $S$. 
The posterior distribution of $(\bbf{U},\bbf{V})$ given $(\bbf{Y},\bbf{Y'})$ is now
\begin{equation} \label{eq:posterior_extra}
	\P((\bbf{U},\bbf{V})=(\bbf{u},\bbf{v})| \bbf{Y},\bbf{Y}') = \frac{1}{\mathcal{Z}_{n,\epsilon}} \left(\prod_{i | \bbf{Y}'_i \neq (*,*)}\bbf{1}((\bbf{u}_i,\bbf{v}_i)=\bbf{Y}'_i) \right) \left( \prod_{i | \bbf{Y}_i'=(*,*)} P_0(\bbf{u}_i,\bbf{v}_i) \right) e^{H_n(\bbf{u},\bbf{v})} \,,
\end{equation}
where $\mathcal{Z}_{n,\epsilon}$ is the appropriate normalization constant.
For $(\bbf{u}, \bbf{v}) \in S^n \times S^n$ we will use the following notations
\begin{align} 
	\bbf{\bar{u}} &= (\bar{u}_1, \dots, \bar{u}_n) = (L_1 \bbf{U}_1 + (1-L_1) \bbf{u}_1, \dots, L_n \bbf{U}_n + (1-L_n)\bbf{u}_n) \,,\label{eq:bar_1}\\
	\bbf{\bar{v}} &= (\bar{v}_1, \dots, \bar{v}_n) = (L_1 \bbf{V}_1 + (1-L_1) \bbf{v}_1, \dots, L_n \bbf{V}_n + (1-L_n)\bbf{v}_n) \,.\label{eq:bar_2}
\end{align}
$\bbf{\bar{u}}$ and $\bbf{\bar{v}}$ are thus obtained by replacing the coordinates of $\bbf{u}$ and $\bbf{v}$ that are revealed by $\bbf{Y}'$ by their revealed values. The notations $\bbf{\bar{u}}$ and $\bbf{\bar{v}}$ will allow us to obtain a very convenient expression for the free energy of the perturbed model which is defined as
$$
F_{n,\epsilon} = \frac{1}{n} \E \log \mathcal{Z}_{n,\epsilon} = \frac{1}{n} \E \Big[ \log \sum_{\bbf{u}, \bbf{v} \in S^n} P_0^{\otimes n}(\bbf{u},\bbf{v}) \exp(H_{n}(\bbf{\bar{u}},\bbf{\bar{v}}))\Big] .
$$
The following Proposition comes from~\cite{lelarge2016fundamental} (Proposition~22):
\begin{proposition} \label{prop:approximation_f_n_epsilon}
	For all $n \geq 1$ and all $\epsilon \in [0,1]$, we have
	$$
	|F_{n} - F_{n,\epsilon} | \leq H(P_U \otimes P_V) \epsilon.
	$$
\end{proposition}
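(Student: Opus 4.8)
The plan is to show that $\epsilon\mapsto F_{n,\epsilon}$ is Lipschitz on $[0,1]$ with constant $k^2\lambda K_0^4$, by bounding its derivative. Write $Z(\bbf L)=\sum_{\bbf u,\bbf v\in S^n}P_0(\bbf u,\bbf v)\,e^{H_n(\bbf{\bar u},\bbf{\bar v})}$, with $\bbf{\bar u},\bbf{\bar v}$ the partially-revealed configurations \eqref{eq:bar_1}--\eqref{eq:bar_2}, so that $F_{n,\epsilon}=\frac1n\E\log Z(\bbf L)$, the expectation bearing on $(\bbf U,\bbf V,\bbf Z)$ and on $\bbf L=(L_i)_i\iid\Ber(\epsilon)$. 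Since $S$ is finite, $\log Z(\bbf L)$ is sandwiched between $\log P_0(\bbf U,\bbf V)+H_n(\bbf U,\bbf V)$ and $\max_{\bbf u,\bbf v}H_n(\bbf u,\bbf v)$, both integrable, so the manipulations below are legitimate; after integrating out $\bbf L$, $F_{n,\epsilon}$ is a polynomial in $\epsilon$. Using the elementary identity $\frac{d}{d\epsilon}\E_{\bbf L}g(\bbf L)=\sum_{i=1}^n\E_{\bbf L}[g(\bbf L^{(i\to1)})-g(\bbf L^{(i\to0)})]$ for $\Ber(\epsilon)$ coordinates ($\bbf L^{(i\to b)}$ denoting $\bbf L$ with $L_i$ frozen to $b$), I get
\[
\frac{d}{d\epsilon}F_{n,\epsilon}=\frac1n\sum_{i=1}^n\E\big[\log Z(\bbf L^{(i\to1)})-\log Z(\bbf L^{(i\to0)})\big].
\]

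The key step will be to recognize each summand as a conditional mutual information. Evaluating the posterior \eqref{eq:posterior_extra} at the true configuration and taking expectations gives, for every fixed reveal set $S\subset\{1,\dots,n\}$,
\[
\E\log Z_S=\E H_n(\bbf U,\bbf V)-(n-|S|)H(P_0)+H\!\left((\bbf U,\bbf V)\mid\bbf Y,(\bbf U_j,\bbf V_j)_{j\in S}\right),
\]
where $H(P_0)=H(\bbf U_1,\bbf V_1)$ is the entropy of one block. Applying this with $S=S_{-i}\cup\{i\}$ and with $S=S_{-i}$, where $S_{-i}=\{j\ne i:L_j=1\}$, the $\E H_n$ terms cancel and, using the independence of $(\bbf U_i,\bbf V_i)$ from the other blocks, the difference telescopes to
\[
\E\big[\log Z(\bbf L^{(i\to1)})-\log Z(\bbf L^{(i\to0)})\big]=H(P_0)-H\!\left((\bbf U_i,\bbf V_i)\mid\bbf Y,(\bbf U_j,\bbf V_j)_{j\in S_{-i}}\right)=I\!\left((\bbf U_i,\bbf V_i);\bbf Y\mid(\bbf U_j,\bbf V_j)_{j\in S_{-i}}\right).
\]
In particular the derivative is nonnegative, so it remains to bound this conditional mutual information by $k^2\lambda K_0^4$, uniformly in $i$ and in the realization of $\bbf L_{-i}$.

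For the bound: conditioning on the remaining blocks only increases the quantity, because $(\bbf U_i,\bbf V_i)$ is independent of $(\bbf U_j,\bbf V_j)_{j\ne i}$, so by the chain rule $I((\bbf U_i,\bbf V_i);\bbf Y\mid(\bbf U_j,\bbf V_j)_{j\in S_{-i}})\le I((\bbf U_i,\bbf V_i);\bbf Y\mid(\bbf U_j,\bbf V_j)_{j\ne i})$. Given all blocks $j\ne i$, the entries $Y_{j,\ell}$ with $j,\ell\ne i$ are known constants plus independent noise, hence irrelevant; only the $2n-1$ entries in row $i$ and column $i$ matter, and stacked together they form the Gaussian observation $\sqrt{\lambda/n}\,\bbf W+\bbf Z^{(i)}$, where $\bbf Z^{(i)}$ is a standard Gaussian vector and $\bbf W\in\R^{2n-1}$ — deterministic given the other blocks — has coordinates $\bbf U_i^\intercal\bbf V_\ell$ ($\ell\ne i$), $\bbf U_j^\intercal\bbf V_i$ ($j\ne i$) and $\bbf U_i^\intercal\bbf V_i$. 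By the data-processing inequality and the I-MMSE bound $I(\bbf W;\sqrt s\,\bbf W+\bbf Z)\le\frac s2\E\|\bbf W\|^2$ (see \cite{guo2005mutual}), and since every coordinate of $\bbf W$ is of the form $\bbf U_a^\intercal\bbf V_b$ with $|\bbf U_a^\intercal\bbf V_b|\le\|\bbf U_a\|\,\|\bbf V_b\|\le kK_0^2$,
\[
I\!\left((\bbf U_i,\bbf V_i);\bbf Y\mid(\bbf U_j,\bbf V_j)_{j\ne i}\right)\le\frac{\lambda}{2n}\,\E\big[\|\bbf W\|^2\big]\le\frac{\lambda}{2n}(2n-1)k^2K_0^4\le k^2\lambda K_0^4.
\]
Hence $0\le\frac{d}{d\epsilon}F_{n,\epsilon}\le k^2\lambda K_0^4$ for every $\epsilon\in(0,1)$, and integrating over the interval between $\epsilon$ and $\epsilon'$ yields the claim.

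The main obstacle will be the middle paragraph: the exact identification of the one-coordinate free-energy increment with a conditional mutual information (equivalently, the bookkeeping in the identity for $\E\log Z_S$, together with the chain-rule rearrangements exploiting block independence). Once that identity is in hand, the final estimate is just the standard second-moment / I-MMSE bound for a low-SNR Gaussian channel. An alternative would be a direct cavity-type differentiation of $F_{n,\epsilon}$ followed by Gaussian integration by parts and the Nishimori identity, but the information-theoretic route above is cleaner and automatically produces the correct constant.
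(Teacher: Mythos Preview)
Your proof is correct and arrives at the same Lipschitz constant, but it follows a different route from the paper. The paper differentiates $F_{n,\epsilon}$ with respect to each revealing probability $\epsilon_i$ separately, writes the partial derivative as a cavity difference
\[
\frac{\partial F_{n,\epsilon}}{\partial \epsilon_1}
=\frac{1}{n}\,\E\log\big\langle e^{h((\bbf U_1,\bbf{\bar u}_{\geq 2}),(\bbf V_1,\bbf{\bar v}_{\geq 2}))}\big\rangle^{(0)}
-\frac{1}{n}\,\E\log\Big\langle \sum_{\bbf u_1,\bbf v_1}P_0(\bbf u_1,\bbf v_1)\,e^{h((\bbf u_1,\bbf{\bar u}_{\geq 2}),(\bbf v_1,\bbf{\bar v}_{\geq 2}))}\Big\rangle^{(0)},
\]
where $h$ collects the row/column-$1$ interactions and $\langle\cdot\rangle^{(0)}$ is the Gibbs measure with those interactions removed, and then bounds each term by pushing the Gaussian expectation inside the logarithm via Jensen's inequality. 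No Nishimori identity or information-theoretic interpretation is used. Your approach instead identifies the one-coordinate increment exactly as a conditional mutual information $I\big((\bbf U_i,\bbf V_i);\bbf Y\mid (\bbf U_j,\bbf V_j)_{j\in S_{-i}}\big)$, upgrades the conditioning to all other blocks (legitimate because the blocks are independent), and then applies the second-moment bound for the scalar Gaussian channel. Both arguments isolate the same $2n-1$ row/column entries; the paper's Jensen step and your I-MMSE step are essentially two packagings of the same Gaussian computation, and both give the bound $\frac{\lambda}{2n}(2n-1)k^2K_0^4\leq k^2\lambda K_0^4$. Your route has the bonus of showing $\epsilon\mapsto F_{n,\epsilon}$ is nondecreasing.

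One wording issue: you call $\bbf W$ ``deterministic given the other blocks''. It is not --- it depends on $(\bbf U_i,\bbf V_i)$; what is deterministic is the \emph{map} $(\bbf U_i,\bbf V_i)\mapsto\bbf W$ once the other blocks are fixed. Your subsequent use of the data-processing inequality (the Markov chain $(\bbf U_i,\bbf V_i)\to\bbf W\to\sqrt{\lambda/n}\,\bbf W+\bbf Z^{(i)}$) is correct regardless, so this is purely a phrasing slip.
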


We define now $\epsilon$ as a uniform random variable over $[0, 1]$, independently of every other random variable. We will note $\E_{\epsilon}$ the expectation with respect to $\epsilon$. For $n \geq 1$, we define also $\epsilon_n = n^{-1/2} \epsilon \sim \mathcal{U}[0, n^{-1/2}]$. Proposition~\ref{prop:approximation_f_n_epsilon} implies that
$$
\big| F_n - \E_{\epsilon} [F_{n, \epsilon_n}] \big| \xrightarrow[n \to \infty]{} 0.
$$
It remains therefore to compute the limit of the free energy averaged over small perturbations.

\subsection{Overlap concentration}

Let $\langle \cdot \rangle_{n,\epsilon}$ denote the expectation with respect to the posterior distribution~\eqref{eq:posterior_extra} of $(\bbf{U},\bbf{V})$ given $(\bbf{Y},\bbf{Y}')$. The Nishimori identity (Proposition~\ref{prop:nishimori}) will thus be valid under $\langle \cdot \rangle_{n,\epsilon}$. We recall that $\bbf{Y}'$ is defined in~\eqref{eq:extra_info}, where $L_i \iid \Ber(\epsilon_n)$ are independent random variables. 
\\

The following lemma comes from~\cite{andrea2008estimating} (Lemma~3.1). It shows that the extra information $\bbf{Y}'$ forces the correlations to decay.

\begin{lemma}
	$$
	n^{-1/2} \E_{\epsilon}  \left[ \frac{1}{n^2} \sum_{1 \leq i,j \leq n} I\big((\bbf{U}_i,\bbf{V}_i);(\bbf{U}_j,\bbf{V}_j) \ | \ \bbf{Y},\bbf{Y'}\big) \right] \leq \frac{2 H(P_0)}{n}  \,.
	$$
\end{lemma}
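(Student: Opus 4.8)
This is Lemma~3.1 of \cite{andrea2008estimating}, and the plan is to reproduce that argument. Write $\bbf{X}_i = (\bbf{U}_i,\bbf{V}_i)$, and for $\delta\in[0,1]$ let $\bbf{Y}'[\delta]$ denote the side information defined by \eqref{eq:extra_info} with revealing probability $\delta$, so that $\bbf{Y}' = \bbf{Y}'[\epsilon_n]$ with $\epsilon_n = n^{-1/2}\epsilon$ and $\epsilon\sim\mathcal{U}[0,1]$. First I would reduce to an unscaled integral: the change of variables $\delta = n^{-1/2}\epsilon$ gives
$$
n^{-1/2}\E_{\epsilon}\!\left[\frac{1}{n^2}\sum_{1\le i,j\le n} I\big(\bbf{X}_i;\bbf{X}_j \,\big|\, \bbf{Y},\bbf{Y}'\big)\right] = \int_0^{n^{-1/2}} \frac{1}{n^2}\sum_{1\le i,j\le n} I\big(\bbf{X}_i;\bbf{X}_j \,\big|\, \bbf{Y},\bbf{Y}'[\delta]\big)\, d\delta \le \int_0^{1} \frac{1}{n^2}\sum_{1\le i,j\le n} I\big(\bbf{X}_i;\bbf{X}_j \,\big|\, \bbf{Y},\bbf{Y}'[\delta]\big)\, d\delta,
$$
since the integrand is non-negative. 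So it suffices to bound the last integral by $2H(P_0)/n$ (in fact one gets $H(P_0)/n$).

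The core of the argument is the following identity, for each fixed $j$: setting $g_j(\delta) := H\big(\bbf{X}_j \,\big|\, \bbf{Y},\bbf{Y}'[\delta]\big)$, one has
$$
\sum_{i=1}^n I\big(\bbf{X}_i;\bbf{X}_j \,\big|\, \bbf{Y},\bbf{Y}'[\delta]\big) = -(1-\delta)\, g_j'(\delta).
$$
To establish it I would couple all revealing levels through i.i.d.\ uniforms $(W_i)_{1\le i\le n}$ on $[0,1]$, declaring $\bbf{X}_i$ revealed at level $\delta$ iff $W_i\le\delta$, and condition throughout on $\bbf{W}$ (independent of $(\bbf{U},\bbf{V},\bbf{Z})$, hence harmless for the entropies). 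Writing $\mathcal{L}(\delta) = \{i : W_i\le\delta\}$ and $h_M := H\big(\bbf{X}_j \,\big|\, \bbf{Y}, (\bbf{X}_i)_{i\in M}\big)$, with $h_M = 0$ when $j\in M$, one has $g_j(\delta) = \E_{\bbf{W}}[h_{\mathcal{L}(\delta)}] = \sum_{M \not\ni j} \delta^{|M|}(1-\delta)^{n-|M|}\, h_M$, a polynomial in $\delta$. Differentiating this, and using the elementary relation $h_{M\cup\{i\}} - h_M = -\,I\big(\bbf{X}_i;\bbf{X}_j \,\big|\, \bbf{Y}, (\bbf{X}_{i'})_{i'\in M}\big)$ for $i\notin M$, together with the fact that $I(\bbf{X}_i;\bbf{X}_j \,|\, \bbf{Y},\bbf{Y}'[\delta])$ vanishes whenever $i$ or $j$ has been revealed (so that conditioning on $i$ remaining unrevealed produces the factor $1-\delta$), one obtains the displayed identity; equivalently this is pure algebra on the polynomial expression for $g_j$.

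Granting the identity, integration by parts finishes the proof: for each $j$,
$$
\int_0^1 \sum_{i=1}^n I\big(\bbf{X}_i;\bbf{X}_j \,\big|\, \bbf{Y},\bbf{Y}'[\delta]\big)\, d\delta = -\int_0^1 (1-\delta)\, g_j'(\delta)\, d\delta = g_j(0) - \int_0^1 g_j(\delta)\, d\delta \le g_j(0) = H\big(\bbf{X}_j \,\big|\, \bbf{Y}\big) \le H(\bbf{X}_j) = H(P_0),
$$
where the last equality is because $\bbf{X}_j\sim P_0$. Summing over $j$ and dividing by $n^2$ bounds $\int_0^1 \frac{1}{n^2}\sum_{i,j} I(\bbf{X}_i;\bbf{X}_j\,|\,\bbf{Y},\bbf{Y}'[\delta])\, d\delta$ by $H(P_0)/n \le 2H(P_0)/n$, which with the first step gives the claim. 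The main obstacle is purely the bookkeeping in the second paragraph — keeping precise track of which coordinates are already revealed and of the resulting $(1-\delta)$ factors when differentiating $g_j$; once the uniform coupling and the polynomial formula for $g_j(\delta)$ are in place, everything else is routine.
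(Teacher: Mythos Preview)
Your proof is correct and in fact provides more than the paper does: the paper gives no argument and simply cites this as Lemma~3.1 of \cite{andrea2008estimating}, whereas you reproduce that argument in full. The key identity $\sum_i I(\bbf{X}_i;\bbf{X}_j\mid\bbf{Y},\bbf{Y}'[\delta]) = -(1-\delta)\,g_j'(\delta)$ and the subsequent integration by parts are exactly the content of that reference, so your approach coincides with the intended one.
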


This implies that the overlap between two replicas, i.e.\ two independent samples $(\bbf{u}^{(1)},\bbf{v}^{(1)})$ and $(\bbf{u}^{(2)},\bbf{v}^{(2)})$ from the Gibbs distribution $\langle \cdot \rangle_{n,\epsilon}$, concentrates. Let us define
\begin{align}
	\bbf{Q}_u &= \Big\langle \frac{1}{n} \sum_{i=1}^n \bbf{u}^{(1)}_i (\bbf{u}^{(2)}_i)^{\intercal} \Big\rangle_{n,\epsilon} = \langle \bbf{u}^{(1)} . \bbf{u}^{(2)} \rangle_{n,\epsilon} \label{eq:def_Q_u} \\
	\bbf{Q}_v &= \Big\langle \frac{1}{n} \sum_{i=1}^n \bbf{v}^{(1)}_i (\bbf{v}^{(2)}_i)^{\intercal} \Big\rangle_{n,\epsilon} = \langle \bbf{v}^{(1)} . \bbf{v}^{(2)} \rangle_{n,\epsilon} \label{eq:def_Q_v}
\end{align}
$\bbf{Q}_u$ and $\bbf{Q}_v$ are two random variables depending only on $(Y_{i,j})_{1 \leq i,j\leq n}$ and $(\bbf{Y}_i')_{1 \leq i \leq n}$. Notice that $\bbf{Q}_u, \bbf{Q}_v \in S_k^+$. 

\begin{proposition}[Overlap concentration]  \label{prop:overlap_multidim}
	\begin{align*}
		\E_{\epsilon} \E \Big\langle \| \bbf{u}^{(1)} . \bbf{u}^{(2)} - \bbf{Q}_u \|^2 \Big\rangle_{n,\epsilon} \xrightarrow[n \to \infty]{} 0
		, \quad \text{ and } \quad
		\E_{\epsilon} \E \Big\langle \| \bbf{v}^{(1)} . \bbf{v}^{(2)} - \bbf{Q}_v \|^2 \Big\rangle_{n,\epsilon} \xrightarrow[n \to \infty]{} 0.
	\end{align*}
\end{proposition}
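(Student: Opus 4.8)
The plan is to reduce the statement to the decay-of-correlation lemma stated just above, using that the Gibbs measure $\langle\cdot\rangle$ is literally the posterior $\P((\bbf{U},\bbf{V})=\cdot\,|\,\bbf{Y},\bbf{Y}')$, so that posterior second moments of the coordinates of $(\bbf{u},\bbf{v})$ are conditional moments of $(\bbf{U},\bbf{V})$ given $(\bbf{Y},\bbf{Y}')$, and correlations between distinct sites are controlled by conditional mutual informations through Pinsker's inequality.

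First I would expand the squared Frobenius norm. Since the two replicas are i.i.d.\ given $(\bbf{Y},\bbf{Y}')$, one has $\bbf{Q}_u=\frac1n\sum_{i}\langle\bbf{u}_i\rangle\langle\bbf{u}_i\rangle^{\intercal}$, and a direct trace computation (using for instance that the conditional expectation of $\big((\bbf{u}^{(1)}_j)^{\intercal}\bbf{u}^{(1)}_i\big)\big((\bbf{u}^{(2)}_i)^{\intercal}\bbf{u}^{(2)}_j\big)$ equals $\langle\bbf{u}_i^{\intercal}\bbf{u}_j\rangle^2$) gives
$$
\E_{\epsilon}\,\E\Big\langle\big\|\bbf{u}^{(1)}.\bbf{u}^{(2)}-\bbf{Q}_u\big\|^2\Big\rangle
=\frac{1}{n^2}\sum_{1\le i,j\le n}\E_{\epsilon}\,\E\Big[\langle\bbf{u}_i^{\intercal}\bbf{u}_j\rangle^2-\big(\langle\bbf{u}_i\rangle^{\intercal}\langle\bbf{u}_j\rangle\big)^2\Big].
$$
The $n$ diagonal terms contribute $O(1/n)$ since $S$ is bounded. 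For $i\ne j$, bounding $\langle\bbf{u}_i^{\intercal}\bbf{u}_j\rangle+\langle\bbf{u}_i\rangle^{\intercal}\langle\bbf{u}_j\rangle$ by a constant and writing $\langle\bbf{u}_i^{\intercal}\bbf{u}_j\rangle-\langle\bbf{u}_i\rangle^{\intercal}\langle\bbf{u}_j\rangle=\sum_{a=1}^{k}\Cov_{\langle\cdot\rangle}(u_{i,a},u_{j,a})$, it remains to control each posterior covariance. At a fixed value of $(\bbf{Y},\bbf{Y}')$, $|\Cov_{\langle\cdot\rangle}(u_{i,a},u_{j,a})|$ is at most a constant (depending on $K_0$) times the total-variation distance between the posterior law of $(U_{i,a},U_{j,a})$ and the product of its two marginals; by the data-processing inequality for total variation this is bounded by the total-variation distance between the posterior law of $(\bbf{U}_i,\bbf{V}_i,\bbf{U}_j,\bbf{V}_j)$ and the product of the posterior laws of $(\bbf{U}_i,\bbf{V}_i)$ and $(\bbf{U}_j,\bbf{V}_j)$, hence by $\sqrt{\tfrac12\,I_{ij}(\bbf{Y},\bbf{Y}')}$ through Pinsker's inequality, where $I_{ij}(\bbf{Y},\bbf{Y}')$ denotes the corresponding pointwise relative entropy.

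It then only remains to average. Taking expectations over $(\bbf{Y},\bbf{Y}')$ and applying Jensen (concavity of $\sqrt{\cdot}$) gives $\E\,|\Cov_{\langle\cdot\rangle}(u_{i,a},u_{j,a})|\le C\sqrt{I_{ij}}$ with $I_{ij}=I((\bbf{U}_i,\bbf{V}_i);(\bbf{U}_j,\bbf{V}_j)\,|\,\bbf{Y},\bbf{Y}')$, so that, by Cauchy--Schwarz over the $n^2$ pairs and Jensen once more to pull $\E_{\epsilon}$ inside the root,
$$
\frac{1}{n^2}\sum_{i\ne j}\E_{\epsilon}\sqrt{I_{ij}}\;\le\;\E_{\epsilon}\sqrt{\frac{1}{n^2}\sum_{i,j}I_{ij}}\;\le\;\sqrt{\,\E_{\epsilon}\Big[\frac{1}{n^2}\sum_{i,j}I_{ij}\Big]}\;\le\;\sqrt{\frac{2H(P_0)}{\sqrt n}}\;\xrightarrow[n\to\infty]{}\;0,
$$
where the last inequality is exactly the decay-of-correlation lemma (recall $\epsilon_n=n^{-1/2}\epsilon$) and $H(P_0)<\infty$ because $P_0$ has finite support. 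Together with the $O(1/n)$ diagonal contribution this yields the first convergence, and the second follows verbatim after replacing $u_{i,a}$ by $v_{i,a}$, since $I_{ij}$ also dominates $I(\bbf{V}_i;\bbf{V}_j\,|\,\bbf{Y},\bbf{Y}')$ by data processing. There is no genuine difficulty beyond invoking the cited lemma: the only steps requiring some care are the algebraic reduction of the overlap fluctuation to site-by-site posterior covariances and the bookkeeping of the two Jensen steps that put the average over $\epsilon$ and over $(\bbf{Y},\bbf{Y}')$ in the right place for the lemma to apply.
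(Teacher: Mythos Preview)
Your proposal is correct and follows essentially the same route as the paper: expand the overlap fluctuation into a double sum over sites, bound each off-diagonal term by a posterior total-variation distance, pass via data processing and Pinsker to the conditional mutual information, and then apply Cauchy--Schwarz/Jensen together with the decay-of-correlation lemma. The only cosmetic difference is that you reduce to the scalars $\langle\bbf{u}_i^{\intercal}\bbf{u}_j\rangle^2-(\langle\bbf{u}_i\rangle^{\intercal}\langle\bbf{u}_j\rangle)^2$ whereas the paper writes the analogous bound through $\|\langle\bbf{u}_i\bbf{u}_j^{\intercal}\rangle\|-\|\langle\bbf{u}_i\rangle\langle\bbf{u}_j\rangle^{\intercal}\|$; both collapse to the same mutual-information bound.
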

See~\cite{lelarge2016fundamental}, Proposition~49 for a proof.

\subsection{Aizenman-Sims-Starr scheme}

Using the concentration results of Proposition~\ref{prop:overlap_multidim} the proofs of Section~\ref{sec:aizenman} can be extended to the multidimensional case.

\begin{proposition} \label{prop:aizenman_multidim}
	\begin{equation} \label{eq:upper_bound_multidim}
		\limsup_{n \to \infty} F_n \leq \limsup_{n \to \infty} \E_{\epsilon} \E \mathcal{F}\left(\bbf{Q}_u, \bbf{Q}_v\right)  \,.
	\end{equation}
\end{proposition}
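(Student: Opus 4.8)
The plan is to transpose the Aizenman--Sims--Starr scheme of Section~\ref{sec:aizenman} to the multidimensional model, using the ``revealing'' perturbation of Section~\ref{sec:small_perturbation} in place of the scalar channels $H_n^{(s)}$ and $H_n^{\text{(pert)}}$ --- since there is no scalar channel to freeze here, we are automatically in the $q_u=q_v=0$ situation of Proposition~\ref{prop:aizenman}. Recall from the end of Section~\ref{sec:small_perturbation} that $\E_\epsilon|F_n - F_{n,\epsilon_n}| \xrightarrow[n\to\infty]{} 0$, so it suffices to control $F_{n,\epsilon_n}$. I would write $G_{n,\epsilon} = nF_{n,\epsilon} = \E\log Z_{n,\epsilon}$ and set $A_n(\epsilon) = G_{n+1,\epsilon_{n+1}} - G_{n,\epsilon_{n+1}}$, a cavity increment in which \emph{both} partition functions carry the reveal rate $\epsilon_{n+1}$, so that only the number of sites and the $\lambda/(n+1)$ versus $\lambda/n$ normalisation change. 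Proposition~\ref{prop:approximation_f_n_epsilon} gives $|G_{j,\epsilon_j} - G_{j,\epsilon_{j+1}}| \leq j\,k^2\lambda K_0^4\,|\epsilon_j - \epsilon_{j+1}| = O(j^{-1/2})$, so the telescoping sum $\sum_{k=0}^{n-1}A_k(\epsilon)$ equals $G_{n,\epsilon_n}$ up to an $O(\sqrt{n})$ error; dividing by $n$ and averaging over $\epsilon$ yields $F_n = \frac1n\sum_{k=0}^{n-1}\E_\epsilon A_k(\epsilon) + o(1)$, and the Ces\`aro inequality (everything is bounded since $S$ is finite) gives $\limsup_n F_n \leq \limsup_n \E_\epsilon A_n(\epsilon)$. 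It then remains to prove $\E_\epsilon\big|A_n - \E\,\mathcal{F}(\lambda,\bbf{Q}_v,\bbf{Q}_u)\big| \xrightarrow[n\to\infty]{} 0$.

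For the cavity computation I would single out the $(n+1)$-th site. Let $\langle\cdot\rangle'$ denote the Gibbs measure on $S^n\times S^n$ attached to $\log P_0 + H_n'$, where $H_n'$ is $H_{n+1}$ restricted to its first $n$ sites (hence with the $\lambda/(n+1)$ normalisation) and with the first $n$ variables revealed at rate $\epsilon_{n+1}$; an immediate adaptation of the proof of Proposition~\ref{prop:overlap_multidim} shows that the matrix overlaps $\bbf{u}^{(1)}.\bbf{u}^{(2)}$ and $\bbf{v}^{(1)}.\bbf{v}^{(2)}$ concentrate under $\E_\epsilon\E\langle\cdot\rangle'$, and by the Nishimori identity so do $\bbf{u}.\bbf{U}$ and $\bbf{v}.\bbf{V}$. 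Decomposing $H_{n+1}(\tilde{\bbf{u}},\tilde{\bbf{v}}) = H_n'(\bbf{u},\bbf{v}) + y(\bbf{u}',\bbf{v}',\bbf{u},\bbf{v})$ with $y$ the added-site field (as in Section~\ref{sec:tap}, without the $g_u,g_v$ terms), and noting that $H_n$ with its $n$ variables revealed at rate $\epsilon_{n+1}$ equals in law $H_n'(\bbf{u},\bbf{v}) + w(\bbf{u},\bbf{v})$ where $w$ is an independent Gaussian field of per-entry variance $\lambda/(n(n+1))$ (turning the $\lambda/(n+1)$ normalisation into $\lambda/n$), and conditioning on $L_{n+1}$ (the frozen case $L_{n+1}=1$ has weight $\epsilon_{n+1}\to 0$ against a bounded integrand, hence is negligible), one obtains
$$A_n = \E\log\Big\langle \sum_{\bbf{u}',\bbf{v}'\in S} P_0(\bbf{u}',\bbf{v}')\,e^{y(\bbf{u}',\bbf{v}',\bbf{u},\bbf{v})} \Big\rangle' - \E\log\big\langle e^{w(\bbf{u},\bbf{v})} \big\rangle' + o(1).$$

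The last step is to identify these two terms exactly as in the proof of the lemma following Proposition~\ref{prop:aizenman}. Using Cauchy--Schwarz together with the bounded-support control on the negative moments of the partition functions, one replaces $\langle\sum P_0 e^y\rangle'$ and $\langle e^w\rangle'$ by their annealed versions, and a Gaussian-process comparison shows that, conditionally on $\bbf{U}_{n+1},\bbf{V}_{n+1}$ and the magnetisations $(\langle\bbf{u}_i\rangle',\langle\bbf{v}_i\rangle')_{i\leq n}$, the relevant field has the same covariance as two independent $k$-dimensional Gaussian observation channels with signal matrices $\frac{n}{n+1}\lambda\bbf{Q}_v'$ and $\frac{n}{n+1}\lambda\bbf{Q}_u'$, where $\bbf{Q}_u' = \langle\bbf{u}^{(1)}.\bbf{u}^{(2)}\rangle'$ and $\bbf{Q}_v' = \langle\bbf{v}^{(1)}.\bbf{v}^{(2)}\rangle'$; this makes the first term equal to $\E\big[\psi_{P_U}(\frac{n}{n+1}\lambda\bbf{Q}_v') + \psi_{P_V}(\frac{n}{n+1}\lambda\bbf{Q}_u')\big] + o(1)$ and the second equal to $\frac{\lambda}{2}\E\,\Tr[\bbf{Q}_v'\bbf{Q}_u'] + o(1)$. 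Since $\psi_{P_U},\psi_{P_V}$ are $\mathcal{C}^1$ and hence Lipschitz on the relevant compact set, the overlap concentration under $\langle\cdot\rangle'$ together with the cavity lemmas of Section~\ref{sec:tap} (Lemma~\ref{lem:gibbs_cavity} and Corollary~\ref{cor:overlap_p}, adapted to the revealing perturbation) let us replace $\bbf{Q}_u',\bbf{Q}_v'$ by $\bbf{Q}_u,\bbf{Q}_v$; recalling that $\mathcal{F}(\lambda,\bbf{q}_1,\bbf{q}_2) = \psi_{P_U}(\lambda\bbf{q}_1) + \psi_{P_V}(\lambda\bbf{q}_2) - \frac{\lambda}{2}\Tr[\bbf{q}_1\bbf{q}_2]$ for $\alpha=1$, this gives $\E_\epsilon|A_n - \E\,\mathcal{F}(\lambda,\bbf{Q}_v,\bbf{Q}_u)|\to 0$ and hence the claim. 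The main obstacle is purely bookkeeping: propagating the overlap concentration to the cavity measure $\langle\cdot\rangle'$, matching the two nearby reveal rates $\epsilon_n$ and $\epsilon_{n+1}$, and checking that the frozen-site and $w$-field contributions are genuinely $o(1)$; the matrix-valued nature of the overlaps is inessential, since the Gaussian comparison and the identification of $\mathcal{F}$ carry over with $\Tr[\bbf{q}_1\bbf{q}_2]$ replacing $q_1 q_2$.
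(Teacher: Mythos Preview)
Your proposal is correct and follows essentially the same approach as the paper; indeed the paper does not give an independent proof of Proposition~\ref{prop:aizenman_multidim} but only states that ``the proofs of Section~\ref{sec:aizenman} can be extended to the multidimensional case'' using the concentration of Proposition~\ref{prop:overlap_multidim} and Corollary~\ref{cor:overlap_multidim}. Your write-up spells out exactly this extension, including the two bookkeeping points that are new relative to Section~\ref{sec:aizenman}: handling the change of reveal rate $\epsilon_n\to\epsilon_{n+1}$ via Proposition~\ref{prop:approximation_f_n_epsilon} (the analogue of the $w^{\text{(pert)}}$ term there), and conditioning on $L_{n+1}$ to dispose of the frozen case with weight $\epsilon_{n+1}=o(1)$. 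The one step worth tightening is the passage from $\bbf{Q}_u',\bbf{Q}_v'$ (under $\langle\cdot\rangle'$, SNR $\lambda/(n+1)$, reveal rate $\epsilon_{n+1}$) to $\bbf{Q}_u,\bbf{Q}_v$ (under $\langle\cdot\rangle_n$, SNR $\lambda/n$, reveal rate $\epsilon_n$): since here the overlap concentration is around the \emph{random} Gibbs mean rather than its expectation, the cleanest route is to note that $\mathcal{F}$ is Lipschitz on the relevant compact and that $\E_\epsilon\E\|\bbf{Q}_u'-\bbf{Q}_u\|\to 0$ by an interpolation in $(\lambda,\epsilon)$ along the lines of Proposition~\ref{prop:approximation_f_n_epsilon} and the $w$-field comparison --- or simply to observe that for the downstream argument in Section~\ref{sec:final_part} either version would do, since both satisfy the same fixed-point constraints in the limit.
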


\subsection{Fixed point equations}

Let $\bbf{q}_u, \bbf{q}_v \in S_k^+$.
Suppose that we have access to the additional observations
\begin{align*}
	Y_i^{(u)} &= \bbf{q}_u^{1/2} \bbf{U}_i + \bbf{Z}_i^{(u)}, \quad \ \text{for} \  1 \leq i \leq n \,,
	\\
	Y_i^{(v)} &= \bbf{q}_v^{1/2} \bbf{V}_i + \bbf{Z}_i^{(v)}, \quad \ \text{for} \  1 \leq i \leq n \,,
\end{align*}
where $(\bbf{Z}_i^{(u)})_{1 \leq i \leq n}$ and $(\bbf{Z}_i^{(v)})_{1 \leq i \leq n}$ are i.i.d.\ $\mathcal{N}(0,\bbf{I}_k)$, independently of everything else.
Let $\bbf{Q_u}$ and $\bbf{Q}_v$ be defined as in equations~\eqref{eq:def_Q_u} and~\eqref{eq:def_Q_v}, where the Gibbs measure $\langle \cdot \rangle_{n,\epsilon}$ denotes the posterior distribution of $(U,V)$ given $Y$, $Y'$, $Y^{(u)}$ and $Y^{(v)}$. Notice that Proposition~\ref{prop:overlap_multidim} still hold for this Gibbs distribution (the proofs are the same). The arguments of Section~\ref{sec:tap} can be extended to the multidimensional case to obtain the multidimensional version of Corollary~\ref{cor:fixed_point}:

\begin{proposition} \label{prop:fixed_point_multidim}
	\begin{align*}
		\E_{\epsilon} \E \Big\| \bbf{Q}_u - F_{P_U}(\bbf{Q}_v + \bbf{q}_u ) \Big\| \xrightarrow[n \to \infty]{} 0 
		\quad \text{and} \quad
		\E_{\epsilon} \E \Big\| \bbf{Q}_v - F_{P_V}(\bbf{Q}_u + \bbf{q}_v) \Big\| \xrightarrow[n \to \infty]{} 0 \,.
	\end{align*}
\end{proposition}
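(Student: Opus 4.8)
The plan is to transcribe the cavity computation of Section~\ref{sec:tap} to the matrix-valued setting, using the revealing perturbation of Section~\ref{sec:small_perturbation} in place of the low-signal Gaussian channels and replacing scalar overlaps by the $k\times k$ overlap matrices $\bbf Q_u,\bbf Q_v$ of \eqref{eq:def_Q_u}--\eqref{eq:def_Q_v}.

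First I would compare the $n$-variable system (with the additional side channels $Y^{(u)},Y^{(v)}$) to the $(n{+}1)$-variable system. Writing $\bbf{\tilde u}=(\bbf u,\bbf u')$, $\bbf{\tilde v}=(\bbf v,\bbf v')$ and decomposing $H_{n+1}(\bbf{\tilde u},\bbf{\tilde v})=H_n'(\bbf u,\bbf v)+h_u(\bbf v,\bbf u')+h_v(\bbf u,\bbf v')+\delta(\bbf u',\bbf v')$ exactly as in Section~\ref{sec:tap} (with $u_iv_j$ replaced by $\bbf u_i^\intercal\bbf v_j$), the side channels and the revealing perturbation split off a single $(n{+}1)$-th term. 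Let $\langle\cdot\rangle'$ be the Gibbs measure of the $n$-variable cavity system, and let $\bbf Q_u'=\frac1n\sum_i\langle\bbf u_i\rangle'\langle\bbf u_i\rangle'^\intercal$, $\bbf Q_v'=\frac1n\sum_i\langle\bbf v_i\rangle'\langle\bbf v_i\rangle'^\intercal$ be the matrix analogues of \eqref{eq:def_q_u}--\eqref{eq:def_q_v}. Integrating out the fresh Gaussians $(Z_{n+1,j})_j$, $(Z_{i,n+1})_i$, $\bbf Z^{(u)}_{n+1}$, $\bbf Z^{(v)}_{n+1}$ as in the computation of \eqref{eq:esp_a_1}--\eqref{eq:esp_a_3}, the effective field acting on $(\bbf u',\bbf v')$ becomes a Gaussian process whose covariance is controlled by the matrix overlaps $\bbf u^{(1)}.\bbf u^{(2)}$, $\bbf u.\bbf U$, $\bbf v^{(1)}.\bbf v^{(2)}$, $\bbf v.\bbf V$; by Proposition~\ref{prop:overlap_multidim} and Corollary~\ref{cor:overlap_multidim} all of these concentrate on $\bbf Q_u'$ or $\bbf Q_v'$. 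This yields the matrix version of Lemma~\ref{lem:l2}, namely $\E_\epsilon\E(A-B)^2\to 0$, where $B$ is the ``decoupled'' expression built from a Gaussian field $h_0(\bbf u',\bbf v')$ of covariance $\lambda(\frac{n}{n+1}\bbf Q_v'+\bbf q_u)$ on the $\bbf u'$-block and $\lambda(\frac{n}{n+1}\bbf Q_u'+\bbf q_v)$ on the $\bbf v'$-block, the key point being $\frac1n\sum_i\langle\bbf u_i\rangle'\langle\bbf u_i\rangle'^\intercal=\bbf Q_u'$ and the analogous identity for $v$.

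From here I would follow Lemmas~\ref{lem:tap} and~\ref{lem:gibbs_cavity}, Corollary~\ref{cor:overlap_p} and Proposition~\ref{prop:tap_F} line by line, replacing scalars by matrices: boundedness of $S$ and Jensen's inequality control the inverse partition functions, so one gets $\E_\epsilon\E\bigl\|\langle\phi(\bbf u_{n+1},\bbf v_{n+1},\bbf U_{n+1},\bbf V_{n+1})\rangle_{n+1}-\frac{\sum P_0\phi\, e^{h_0}}{\sum P_0\, e^{h_0}}\bigr\|\to 0$ for every bounded matrix-valued $\phi$ on $S^4$ (the analogue of Lemma~\ref{lem:tap}), then $\E_\epsilon\E\|\langle\bbf u_1\rangle_{n+1}-\langle\bbf u_1\rangle'\|\to 0$ (Lemma~\ref{lem:gibbs_cavity}) and hence $\E_\epsilon\|\E\langle\bbf u^{(1)}.\bbf u^{(2)}\rangle_{n+1}-\E\bbf Q_u'\|\to 0$ (Corollary~\ref{cor:overlap_p}), and symmetrically for $v$. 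The crucial point is the matrix version of Proposition~\ref{prop:tap_F}: conditionally on $(\bbf U_{n+1},\bbf V_{n+1})$ and on $(\langle\bbf u_i\rangle',\langle\bbf v_i\rangle')_{1\leq i\leq n}$, the process $(h_0(\bbf u',\bbf v'))_{\bbf u',\bbf v'}$ is Gaussian with the \emph{same} covariance structure as the signal-plus-noise process of a pair of matrix scalar channels of intensities $\lambda(\frac{n}{n+1}\bbf Q_v'+\bbf q_u)$ and $\lambda(\frac{n}{n+1}\bbf Q_u'+\bbf q_v)$; this is where one uses $\bbf Q_u',\bbf Q_v'\in S_k^+$ so that the matrix square roots $\bbf q_u^{1/2}$ and $(\bbf Q_v')^{1/2}$ make sense, and the Nishimori-line identity ensuring that the ``variance'' and ``signal'' coefficients of $h_0$ agree. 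Consequently the decoupled ratio equals $F_\phi\bigl(\lambda\frac{n}{n+1}\bbf Q_v'+\lambda\bbf q_u,\;\lambda\frac{n}{n+1}\bbf Q_u'+\lambda\bbf q_v\bigr)$ for the multidimensional overlap function $F_\phi$; Lipschitz continuity of $F_\phi$ on the relevant compact set together with the concentration above lets one replace $\bbf Q_u',\bbf Q_v'$ by $\E\langle\bbf u^{(1)}.\bbf u^{(2)}\rangle_{n+1}$ and $\E\langle\bbf v^{(1)}.\bbf v^{(2)}\rangle_{n+1}$ and drop the $\frac{n}{n+1}$ factors.

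Finally, taking $\phi(\bbf u_1,\bbf v_1,\bbf u_2,\bbf v_2)=\bbf u_1\bbf u_2^\intercal$ makes $F_\phi$ coincide with $F_{P_U}$ of Section~\ref{sec:scalar_channel_proofs}, and the identity above combined with $\E\langle\bbf u_{n+1}(\bbf u_{n+1})^\intercal\rangle=\E\bbf Q_u+O(1/n)$ (via the analogue of Corollary~\ref{cor:overlap_p}) and the fact that $F_n$ and $F_{n+1}$ converge to the same limit yields $\E_\epsilon\E\|\bbf Q_u-F_{P_U}(\lambda\bbf Q_v+\lambda\bbf q_u)\|\to 0$; the $\bbf v$-equation follows by exchanging the roles of $u$ and $v$. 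I expect the main obstacle to be the covariance-matching in the matrix analogue of Proposition~\ref{prop:tap_F}: one has to verify, entry by entry of the $k\times k$ covariance, that averaging $\sum_{j=1}^n((\bbf u')^\intercal\bbf v_j^{(1)})((\bbf u')^\intercal\bbf v_j^{(2)})$ over the fresh noise and over $\langle\cdot\rangle'$ reproduces exactly the quadratic form $(\bbf u')^\intercal\bigl(\frac{n}{n+1}\bbf Q_v'+\bbf q_u\bigr)\bbf u'$ with the correct cross-terms in $\bbf U_{n+1}$, and that this coincides with the covariance of a genuine matrix-valued Gaussian channel; once this is granted, everything else is a routine transcription of Section~\ref{sec:tap}.
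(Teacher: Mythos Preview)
Your plan is essentially what the paper intends: the paper's own proof is the single sentence ``the arguments of Section~\ref{sec:tap} can be extended to the multidimensional case,'' and your outline carries out exactly that transcription, correctly swapping the Gaussian side-information perturbation for the revealing perturbation of Section~\ref{sec:small_perturbation} and scalar overlaps for the $k\times k$ matrices $\bbf Q_u,\bbf Q_v$.

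One imprecision is worth flagging. In the scalar proof the overlaps concentrate around their \emph{full} expectation $\E\langle\bbf u^{(1)}.\bbf u^{(2)}\rangle$ (Theorem~\ref{th:overlap_concentration}), and Corollary~\ref{cor:fixed_point} is stated in terms of those deterministic means. With the revealing perturbation, Proposition~\ref{prop:overlap_multidim} only gives concentration around the \emph{random} conditional mean $\bbf Q_u=\langle\bbf u^{(1)}.\bbf u^{(2)}\rangle$, and accordingly Proposition~\ref{prop:fixed_point_multidim} is stated with the random $\bbf Q_u,\bbf Q_v$ inside the norm. So in your analogue of Proposition~\ref{prop:tap_F} you should not ``replace $\bbf Q_u',\bbf Q_v'$ by $\E\langle\bbf u^{(1)}.\bbf u^{(2)}\rangle_{n+1}$''; instead keep $\bbf Q_u',\bbf Q_v'$ random, use the Lipschitz bound pointwise, and combine with the matrix version of Corollary~\ref{cor:overlap_p} (namely $\E_\epsilon\E\|\bbf Q_u'-\bbf Q_u\|\to 0$) to pass from the cavity overlaps to the $(n{+}1)$-system overlaps. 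The appeal to ``$F_n$ and $F_{n+1}$ converge to the same limit'' is then unnecessary. Apart from this adjustment the route is the paper's route.
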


\subsection{The lower bound: interpolation method}

\begin{proposition} \label{prop:interpolation_multidim}
	$$
	\liminf_{n \to \infty} F_n \geq \sup_{(\bbf{q}_1,\bbf{q}_2) \in \Gamma }\mathcal{F}(\bbf{q}_1,\bbf{q}_2) \,.
	$$
\end{proposition}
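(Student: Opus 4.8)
The plan is to transpose the interpolation argument of Proposition~\ref{prop:interpolation} to the matrix setting, with the revealing perturbation of Section~\ref{sec:small_perturbation} in place of the Gaussian one. Fix $(\bbf{q}_1,\bbf{q}_2)\in\Gamma(\lambda)$ (recall $\alpha=1$, so $\bbf{q}_2 = F_{P_U}(\lambda\bbf{q}_1)$ and $\bbf{q}_1 = F_{P_V}(\lambda\bbf{q}_2)$). For $t\in[0,1]$ I would introduce the interpolating Hamiltonian combining the matrix channel \eqref{eq:hamiltonian_multidim} at signal-to-noise ratio $\lambda t$ with two decoupled scalar channels of intensities $\lambda(1-t)\bbf{q}_1$ (for $\bbf{U}$) and $\lambda(1-t)\bbf{q}_2$ (for $\bbf{V}$), and let $\langle\cdot\rangle_t$ be the corresponding Gibbs measure, conditioned in addition on the extra information $\bbf{Y}'$ revealing each $(\bbf{U}_i,\bbf{V}_i)$ with probability $\epsilon_n = n^{-1/2}\epsilon$. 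Set $\phi(t) = \frac{1}{n}\E_\epsilon\E\log Z_{n,t}$. At $t=1$ the scalar channels are absent, so $\phi(1)=\E_\epsilon F_{n,\epsilon_n}$ and Proposition~\ref{prop:approximation_f_n_epsilon} gives $|\phi(1)-F_n|\xrightarrow[n\to\infty]{}0$. At $t=0$ the matrix channel is absent and the model factorizes over the $n$ sites, so up to the $O(\epsilon_n)$ cost of the revealing,
\[
\lim_{n\to\infty}\phi(0) = \psi_{P_U}(\lambda\bbf{q}_1) + \psi_{P_V}(\lambda\bbf{q}_2) = \mathcal{F}(\lambda,\bbf{q}_1,\bbf{q}_2) + \frac{\lambda}{2}\Tr[\bbf{q}_1\bbf{q}_2].
\]

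The core step is the computation of $\phi'(t)$ by Gaussian integration by parts on the noise of the matrix channel and of the two scalar channels, followed by the Nishimori identity (Proposition~\ref{prop:nishimori}) applied to each resulting term, exactly as in the one-dimensional case; the only new feature is that the overlap matrices appear inside traces, and one should obtain
\[
\phi'(t) = \frac{\lambda}{2}\,\E_\epsilon\E\Big\langle\Tr\big[(\bbf{u}^{(1)}.\bbf{u}^{(2)} - \bbf{q}_2)(\bbf{v}^{(1)}.\bbf{v}^{(2)} - \bbf{q}_1)\big]\Big\rangle_t - \frac{\lambda}{2}\Tr[\bbf{q}_1\bbf{q}_2].
\]
Since the interpolated model at fixed $t$ is itself of the type studied in Section~\ref{sec:small_perturbation} (matrix channel at SNR $\lambda t$, extra scalar channels, revealing), the concentration of the matrix overlaps (Proposition~\ref{prop:overlap_multidim} and Corollary~\ref{cor:overlap_multidim}) allows me to replace $\bbf{u}^{(1)}.\bbf{u}^{(2)}$ and $\bbf{v}^{(1)}.\bbf{v}^{(2)}$ by $\E\langle\bbf{u}^{(1)}.\bbf{u}^{(2)}\rangle_t$ and $\bbf{R}_t := \E\langle\bbf{v}^{(1)}.\bbf{v}^{(2)}\rangle_t$ up to an $o_n(1)$ term, while the analogue of Proposition~\ref{prop:fixed_point_multidim} for the matrix channel at SNR $\lambda t$ and the extra $\bbf{U}$-channel at intensity $\lambda(1-t)\bbf{q}_1$ gives $\E\langle\bbf{u}^{(1)}.\bbf{u}^{(2)}\rangle_t = F_{P_U}\big(\lambda t\,\bbf{R}_t + \lambda(1-t)\bbf{q}_1\big) + o_n(1)$. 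Using $\bbf{q}_2 = F_{P_U}(\lambda\bbf{q}_1)$ and the fact that the two arguments of $F_{P_U}$ differ by exactly $\lambda t(\bbf{R}_t - \bbf{q}_1)$, the monotone-operator (Loewner) form of the monotonicity of $F_{P_U}$ from Lemma~\ref{lem:general_convex} forces $\Tr\big[(F_{P_U}(\lambda t\bbf{R}_t + \lambda(1-t)\bbf{q}_1) - F_{P_U}(\lambda\bbf{q}_1))(\bbf{R}_t - \bbf{q}_1)\big]\geq 0$ for every $t>0$, whence $\liminf_{n\to\infty}\phi'(t)\geq -\frac{\lambda}{2}\Tr[\bbf{q}_1\bbf{q}_2]$.

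To conclude I would bound $\phi'$ uniformly (the overlap matrices and the $\bbf{q}_i$ stay in a fixed compact set) and invoke Fatou's lemma as in Proposition~\ref{prop:interpolation}:
\[
\liminf_{n\to\infty}\big(\phi(1)-\phi(0)\big) \geq \int_0^1\liminf_{n\to\infty}\phi'(t)\,dt \geq -\frac{\lambda}{2}\Tr[\bbf{q}_1\bbf{q}_2].
\]
Combining this with the two endpoint limits yields $\liminf_n F_n \geq \mathcal{F}(\lambda,\bbf{q}_1,\bbf{q}_2)$, and taking the supremum over $(\bbf{q}_1,\bbf{q}_2)\in\Gamma(\lambda)$ proves the proposition. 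The step I expect to be the most delicate is the derivative computation together with checking that the $o_n(1)$ errors from Propositions~\ref{prop:overlap_multidim} and~\ref{prop:fixed_point_multidim} are uniform enough to survive the $\E_\epsilon$ average and the interchange with $\int_0^1\cdot\,dt$; the genuinely new ingredients compared with Section~\ref{sec:interpolation} are only the matrix algebra in $\phi'(t)$ and the use of the monotone-operator version of the monotonicity of $F_{P_U}$ instead of its scalar counterpart.
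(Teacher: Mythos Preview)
Your approach matches the paper's, but one step is imprecise as written. Proposition~\ref{prop:overlap_multidim} only gives concentration of the overlap around the \emph{random} Gibbs average $\bbf{Q}_v = \langle \bbf{v}^{(1)}.\bbf{v}^{(2)}\rangle_t$, not around the deterministic $\E\langle\bbf{v}^{(1)}.\bbf{v}^{(2)}\rangle_t$ (contrast with the one-dimensional Proposition~\ref{prop:overlap_uv}, where the deterministic mean does appear). Likewise Proposition~\ref{prop:fixed_point_multidim} is stated with the random $\bbf{Q}_u,\bbf{Q}_v$ on both sides. Replacing the overlaps by their full expectations, and then writing $\E\langle\bbf{u}^{(1)}.\bbf{u}^{(2)}\rangle_t \approx F_{P_U}(\lambda t\,\bbf{R}_t + \lambda(1-t)\bbf{q}_1)$, is therefore not justified by the cited propositions: the missing step would be concentration of $\bbf{Q}_u,\bbf{Q}_v$ around their means, which the revealing perturbation does not provide.

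The fix, which is what the paper does, is simply to keep the random $\bbf{Q}_u,\bbf{Q}_v$ inside $\E_\epsilon\E[\cdot]$, obtaining
\[
\phi'(t) = \frac{\lambda}{2}\,\E_\epsilon\E\,\Tr\Big[\big(F_{P_U}(\lambda t\,\bbf{Q}_v + \lambda(1-t)\bbf{q}_1) - F_{P_U}(\lambda\bbf{q}_1)\big)\big(\bbf{Q}_v - \bbf{q}_1\big)\Big] - \frac{\lambda}{2}\Tr[\bbf{q}_1\bbf{q}_2] + o_n(1),
\]
and to observe that the trace is nonnegative \emph{pointwise}. The correct justification for this last point is that $\tfrac12 F_{P_U} = \nabla\psi_{P_U}$ with $\psi_{P_U}$ convex (Lemma~\ref{lem:general_convex}~(\ref{item:psi_convex}) and~(\ref{item:psi_diff})), so $F_{P_U}$ is a monotone operator in the sense $\Tr[(F_{P_U}(\bbf{a})-F_{P_U}(\bbf{b}))(\bbf{a}-\bbf{b})]\ge 0$ for all $\bbf{a},\bbf{b}\in S_k^+$. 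Loewner monotonicity alone (item~(\ref{item:F_inc})) does not suffice here, since $\bbf{Q}_v - \bbf{q}_1$ need not be sign-definite. With these two adjustments your argument is the paper's.
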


\begin{proof}
	Let $(\bbf{q}_1,\bbf{q}_2) \in \Gamma$.
	Define, for $t \in [0,1]$, the Hamiltonians
	\begin{align*}
		H_{n,t}(\bbf{u},\bbf{v}) &= \sum_{1 \leq i,j \leq n} \sqrt{\frac{ t}{n}} \bbf{u}_i^{\intercal} \bbf{v}_j \bbf{Z}_{i,j} + \frac{ t}{n} \bbf{u}_i^{\intercal} \bbf{v}_j \bbf{U}_i^{\intercal} \bbf{V}_j - \frac{ t}{2n} (\bbf{u}_i^{\intercal} \bbf{v}_j)^2 \,, \\
		H_{n,t}^{(s)}(\bbf{u},\bbf{v}) &= \sum_{i=1}^n \sqrt{ (1-t)} \bbf{Z}^{(u)\intercal}_i\bbf{q}_2^{1/2} \bbf{u}_i +  (1-t) \bbf{u}_i^{\intercal} \bbf{q}_2 \bbf{U}_i - \frac{ (1-t) \bbf{q}_2}{2} (\bbf{u}_i^{\intercal} \bbf{u}_i)^2 \\
																							 &+ \sum_{i=1}^n \sqrt{ (1-t)} \bbf{Z}^{(v)\intercal}_i \bbf{q}_1^{1/2} \bbf{v}_i +  (1-t) \bbf{v}_i^{\intercal} \bbf{q}_1 \bbf{V}_i - \frac{ (1-t) \bbf{q}_1}{2} (\bbf{v}_i^{\intercal} \bbf{v}_i)^2 \,,
	\end{align*}
	for $\bbf{u},\bbf{v} \in S^n$, and $H_{n,t}^{\text{(tot)}} = H_{n,t} + H_{n,t}^{(s)}$. Let $\langle \cdot \rangle_t$ be the Gibbs measure defined as
	$$
	\langle h(\bbf{u},\bbf{v}) \rangle_t = \frac{\sum_{\bbf{u},\bbf{v} \in S^n} P_0^{\otimes n}(\bbf{u},\bbf{v}) h(\bbf{\bar{u}},\bbf{\bar{v}}) \exp(H_{n,t}^{\text{(tot)}}(\bbf{\bar{u}},\bbf{\bar{v}}))}
	{\sum_{\bbf{u},\bbf{v} \in S^n} P_0^{\otimes n}(\bbf{u},\bbf{v}) \exp(H_{n,t}^{\text{(tot)}}(\bbf{\bar{u}},\bbf{\bar{v}}))}
	, \ \text{for any function } h \ \text{on } (S^n)^2,
	$$
	where we recall that the notations $\bbf{\bar{u}}$ and $\bbf{\bar{v}}$ are defined by (\ref{eq:bar_1}-\ref{eq:bar_2}).
	Define
	$$
	\phi: t \in [0,1] \mapsto \frac{1}{n} \E_{\epsilon} \E \left[ \log \left( \sum_{\bbf{u},\bbf{v} \in S^n} P_0^{\otimes n}(\bbf{u},\bbf{v}) \exp(H_{n,t}^{\text{(tot)}}(\bbf{\bar{u}},\bbf{\bar{v}})) \right) \right].
	$$
	Let $t \in (0,1)$ be fixed. Gaussian integration by parts and Nishimori identity lead to
	\begin{align}
		\phi'(t) &= 
		\E_{\epsilon} 
		\left[
			\frac{1}{2} \E \Big\langle 
				\Tr\big[(\bbf{u}^{(1)}.\bbf{u}^{(2)})^{\intercal}(\bbf{v}^{(1)}.\bbf{v}^{(2)}) - \bbf{q}_2 (\bbf{u}^{(1)}.\bbf{u}^{(2)}) - \bbf{q}_1 (\bbf{v}^{(1)}.\bbf{v}^{(2)}) \big]
			\Big\rangle_t
		\right] \nonumber
		\\
		&=
		\E_{\epsilon} 
		\left[
			\frac{1}{2} \E \Big\langle 
				((\bbf{u}^{(1)}.\bbf{u}^{(2)})^{\intercal} - \bbf{q}_1)(\bbf{v}^{(1)}.\bbf{v}^{(2)} - \bbf{q}_2)
		\Big\rangle_t \right] \nonumber
		- \frac{1}{2} \Tr [\bbf{q}_2 \bbf{q}_1] \\
		&=
		\frac{1}{2} \E_{\epsilon} \E\left[ 
		\Tr \big[ (\langle \bbf{u}^{(1)}.\bbf{u}^{(2)} \rangle_t - \bbf{q}_1)(\langle \bbf{v}^{(1)}.\bbf{v}^{(2)} \rangle_t - \bbf{q}_2) \big] \right]
		- \frac{1}{2} \Tr[\bbf{q}_2 \bbf{q}_1] + o_{n}(1) \,, \label{eq:phi_p_multidim}
	\end{align}
	where $o_n(1)$ denotes a quantity that goes to $0$ as $n \to \infty$, because of the concentration of the overlaps (Proposition~\ref{prop:overlap_multidim}). 
	By Proposition~\ref{prop:fixed_point_multidim}
	\begin{align*}
		&\E_{\epsilon} \E \Big\| \langle \bbf{u}^{(1)}.\bbf{u}^{(2)} \rangle_t - F_{P_U}( t \langle \bbf{v}^{(1)}.\bbf{v}^{(2)} \rangle_t +  (1-t)\bbf{q}_2) \Big\| \xrightarrow[n \to \infty]{} 0 \,,
	\end{align*}
	We have also $\bbf{q}_1 = F_{P_U}( \bbf{q}_2)$.
	Thus
	\begin{align*}
		& \E_{\epsilon} \E \left[  \Tr\left[ (\langle \bbf{u}^{(1)}.\bbf{u}^{(2)} \rangle_t - \bbf{q}_1)( \langle \bbf{v}^{(1)}.\bbf{v}^{(2)} \rangle_t - \bbf{q}_2) \right] \right] \\
		&\qquad \qquad= \E_{\epsilon} \E \left[  \Tr\left[ 
\left(F_{P_U}( t \langle \bbf{v}^{(1)}.\bbf{v}^{(2)}\rangle_t +  (1-t)\bbf{q}_2)- F_{P_U}( \bbf{q}_2)\right)
\left( \langle \bbf{v}^{(1)}.\bbf{v}^{(2)} \rangle_t - \bbf{q}_2\right) 
\right] \right] + o_n(1)
\\
& \qquad \qquad \geq o_n(1)
\end{align*}
because $\frac{1}{2}F_{P_U}$ is the gradient of the convex function $\psi_{P_U}$ (Lemma~\ref{lem:general_convex}).
Consequently, by Equation~\eqref{eq:phi_p_multidim}, $\liminf_{n \to \infty} \phi'(t) \geq - \frac{1}{2} \bbf{q}_2 \bbf{q}_1$. Using Fatou's lemma
\begin{equation}\label{eq:ineq_phi_p}
	\liminf_{n \to \infty} \big( \phi(1) - \phi(0) \big) = \liminf_{n \to \infty} \int_0^1 \phi'(t) dt \geq \int_0^1 \liminf_{n \to \infty} \phi'(t) dt \geq - \frac{1}{2} \bbf{q}_2 \bbf{q}_1 \,.
\end{equation}
We have $\phi(1) = \frac{1}{n} \E_{\epsilon} \E \log \left( \sum_{\bbf{u},\bbf{v} \in S^n} P_0^{\otimes n}(\bbf{u},\bbf{v}) e^{H_n^{\text{(tot)}}(\bbf{\bar{u}},\bbf{\bar{v}})} \right) = \E_{\epsilon} [F_{n,\epsilon}]$. 
Hence
$\liminf\limits_{n \to \infty} \phi(1) = \liminf\limits_{n \to \infty} F_n$. 
Analogously to Proposition~\ref{prop:approximation_f_n_epsilon}, the effect of the perturbation term inside $\phi(0)$ will be, in the limit, negligible:
\begin{align*}
	\limsup_{n \to \infty} \phi(0) 
	&= \limsup_{n \to \infty} \frac{1}{n} \E \log \Big( \sum_{\bbf{u},\bbf{v} \in S^n} P_0^{\otimes n}(\bbf{u},\bbf{v}) \exp(\sum_{i=1}^n \bbf{Z}^{(u)\intercal} \bbf{q}_2^{1/2} \bbf{u}_i +   \bbf{u}_i^{\intercal} \bbf{q}_2 \bbf{U}_i - \frac{  }{2} \bbf{u}_i^{\intercal} \bbf{q}_2 \bbf{u}_i
	\\
	&\qquad \qquad\qquad\qquad\qquad\qquad\qquad \qquad \qquad 
	+ \sum_{i=1}^n \bbf{Z}^{(v)}_i \bbf{q}_1^{1/2} \bbf{v}_i +   \bbf{v}_i^{\intercal} \bbf{q}_1 \bbf{V}_i - \frac{1}{2} \bbf{v}_i^{\intercal} \bbf{q}_1 \bbf{v}_i)\Big)
	\\
	&= \E \log\!\! \left( \!\sum_{\bbf{u},\bbf{v} \in S}\!\! P_0(\bbf{u},\bbf{v}) \exp(  \bbf{Z}_1^{\intercal} \bbf{q}_2^{1/2} \bbf{u} +   \bbf{u}^{\intercal} \bbf{q}_2 \bbf{U} - \frac{1}{2} \bbf{u}^{\intercal} \bbf{q}_2 \bbf{u}
	+ \bbf{Z}_2 \bbf{q}_1^{1/2} \bbf{v} +   \bbf{v}^{\intercal} \bbf{q}_1 \bbf{V} - \frac{ }{2} \bbf{v}^{\intercal} \bbf{q}_1 \bbf{v})\!\!\right)
	\\
	&= \mathcal{F}(\bbf{q}_1,\bbf{q}_2) + \frac{1}{2} \bbf{q}_2 \bbf{q}_1 \,.
\end{align*}
We conclude using equation~\eqref{eq:ineq_phi_p}: $\liminf\limits_{n \to \infty} F_n \geq \mathcal{F}(\bbf{q}_1,\bbf{q}_2)$.
\end{proof}

\subsection{The final part}

The remaining of the proof is exactly the same than in the unidimensional case (Section~\ref{sec:final_part}): the variables $\bbf{Q}_u$ and $\bbf{Q}_v$ converge along a subsequence to a point of $\Gamma$, because of Proposition~\ref{prop:fixed_point_multidim}. This proves the converse bound of Proposition~\ref{prop:interpolation_multidim} and thus Theorem~\ref{th:rs_formula_multidim} (again we use Proposition~\ref{prop:min_max} to obtain the ``max-min formula'').

\begin{appendices}

\section{The linear Gaussian channel} \label{sec:linear_channel}

In this section we will work with positive semi-definite matrices. We will denote by $S_k^+$ the set of $k \times k$ positive semi-definite matrices. Recall that $S_k^+$ is a convex cone. We will also use Loewner (partial) order $\preceq$ on $S_k^+$. For $\bbf{A},\bbf{B} \in S_k^+$,
$$
\bbf{A} \preceq \bbf{B} \iff \forall \bbf{x} \in \R^k, \ \bbf{x}^{\intercal} \bbf{A} \bbf{x} \leq \bbf{x}^{\intercal} \bbf{B} \bbf{x} \iff \bbf{B} - \bbf{A} \in S_k^+.
$$
We will also use the strict inequality $\prec$: $\bbf{A} \prec \bbf{B} \iff \forall \bbf{x} \in \R^k, \ \bbf{x}^{\intercal} \bbf{A} \bbf{x} < \bbf{x}^{\intercal} \bbf{B} \bbf{x}$. Note that when $k=1$, $\preceq$ and $\prec$ correspond to the usual ordering of $\R$.

\subsection{Properties of the linear Gaussian channel} \label{sec:scalar_channel_proofs}

Let $P_X$ be a probability distribution on $\R^k$ ($k \geq 1$) with finite second moment and $\bbf{X} \sim P_X$. Let $\bbf{Z} \sim \mathcal{N}(\bbf{0},\bbf{I}_k)$ be independent from $\bbf{X}$. Let $\bbf{q} \in S_k^+$ and suppose that we observe
$$
\bbf{Y} = \bbf{q}^{1/2} \bbf{X} + \bbf{Z} \,.
$$
We define the Gibbs measure $\langle \cdot \rangle_{\bbf{q}}$ as the expectation associated to the posterior distribution $\P(\bbf{X} | \bbf{Y})$, defined by
\begin{equation}\label{eq:def_gibbs_scalar}
	\big\langle f(\bbf{x}) \big\rangle_{\bbf{q}} 
	= \frac{\int_{\bbf{x} \in \R^k} dP_X(\bbf{x})f(\bbf{x}) \exp\big( \bbf{Z}^{\intercal} \bbf{q}^{1/2} \bbf{x} + \bbf{X}^{\intercal} \bbf{q} \bbf{x} - \frac{1}{2} \bbf{x}^{\intercal} \bbf{q} \bbf{x} \big)}{\int_{\bbf{x} \in \R^k} dP_X(\bbf{x}) \exp\big( \bbf{Z}^{\intercal} \bbf{q}^{1/2} \bbf{x} + \bbf{X}^{\intercal} \bbf{q} \bbf{x} - \frac{1}{2} \bbf{x}^{\intercal} \bbf{q} \bbf{x} \big)} \,,
\end{equation}
for any continuous bounded function $f$.
Let $\bbf{x}$ be distributed according to $\langle \cdot \rangle_{\bbf{q}}$ independently of everything else. We define the overlap function:
$$
F_{P_X}: \bbf{q} \in S_k^+ \mapsto 
\E \langle \bbf{x} \bbf{X}^{\intercal} \rangle_{\bbf{q}}
=\E \left[ \langle \bbf{x} \rangle_{\bbf{q}} \langle \bbf{x} \rangle_{\bbf{q}}^{\intercal} \right] \in S_k^+ \,.
$$
We also define the free energy function
\begin{align*}
	\psi_{P_X}: \bbf{q} \in S_k^+ \mapsto \E \log \Big(\int_{\bbf{x} \in \R^k} dP_X(\bbf{x}) \exp\big( \bbf{Z}^{\intercal} \bbf{q}^{1/2} \bbf{x} + \bbf{X}^{\intercal} \bbf{q} \bbf{x} - \frac{1}{2} \bbf{x}^{\intercal} \bbf{q} \bbf{x} \big)\Big) \,.
\end{align*}
It is not difficult to verify that both functions are continuous over $S_k^+$.

\begin{lemma}\label{lem:mmse_opt}
	For all measurable function $f: \R^k \mapsto \R^k$, we have
	$$
	\bbf{0} \preceq \E_{P_X} \big[\bbf{X} \bbf{X}^{\intercal}\big] - F_{P_X}(\bbf{q}) \preceq \E \Big[(\bbf{X} - f(\bbf{Y}))(\bbf{X} - f(\bbf{Y}))^{\intercal}\Big] \,.
	$$
\end{lemma}
\begin{proof}
	Let $\bbf{v} \in \R^k$. Define the random variable $X_{\bbf{v}} = \bbf{X}^{\intercal} \bbf{v}$. We have
	$$
	\bbf{v}^{\intercal} \big(\E_{P_X} \big[\bbf{X} \bbf{X}^{\intercal}\big] - F_{P_X}(\bbf{q}) \big) \bbf{v} = \E \big[X_{\bbf{v}}^2 \big] - \E \big[X_{\bbf{v}} \E[X_{\bbf{v}}| \bbf{Y}]\big] = \min_{\hat{\theta}} \E \Big[\big(X_{\bbf{v}} - \hat{\theta}(\bbf{Y})\big)^2\Big] \,,
	$$
	where the minimum is taken with respect all measurable function $\hat{\theta}: \R^k \to \R$. The lemma follows from the fact that $\bbf{v}^{\intercal}\E \Big[(\bbf{X} - f(\bbf{Y}))(\bbf{X} - f(\bbf{Y}))^{\intercal}\Big]\bbf{v} = \E \big[(X_{\bbf{v}} - \bbf{v}^{\intercal}f(\bbf{Y}))^2\big]$.
\end{proof}
\\

The next lemma states the main properties of the functions $\psi_{P_X}$ and $F_{P_X}$.

\begin{lemma} \label{lem:general_convex}
	\begin{enumerate}[(i)]
		\item \label{item:psi_convex} $\psi_{P_X}$ is convex, 
		\item \label{item:psi_strictly} If $\Cov(\bbf{X})$ is inversible, then $\psi_{P_X}$ is strictly convex.
		\item \label{item:psi_diff} $\psi_{P_X}$ is differentiable on $S_k^+$ and for $\bbf{q} \in S_k^+$
			$$
			\nabla \psi_{P_X} (\bbf{q}) = \frac{1}{2} F_{P_X}(\bbf{q})
			$$
		\item \label{item:F_inc} $F_{P_X}$ is non-decreasing in the sense that, if $\bbf{q}_1 \preceq \bbf{q}_2$, then $F_{P_X}(\bbf{q}_1) \preceq F_{P_X}(\bbf{q}_2)$. If $\Cov(\bbf{X}) \succ \bbf{0}$ and $\bbf{q}_1 \prec \bbf{q}_2$, then $F_{P_X}(\bbf{q}_1) \prec F_{P_X}(\bbf{q}_2)$.
		\item \label{item:range} For $\bbf{q} \in S_k^+$, $\bbf{0} \preceq F_{P_X}(\bbf{q}) \preceq \E[\bbf{X} \bbf{X}^{\intercal}]$ ($\prec \E[\bbf{X}\bbf{X}^{\intercal}]$ if $\Cov(\bbf{X})$ is inversible).
		\item \label{item:F_0} $F_{P_X}(\bbf{0})=\E[\bbf{X}] \E[\bbf{X}]^{\intercal}$.
		\item \label{item:lim_F} $F_{P_X}(\bbf{q}) \xrightarrow[\bbf{q} \to \infty]{} \E\left[\bbf{X} \bbf{X}^{\intercal} \right]$. ($\bbf{q}\to \infty$ mean here that \textbf{all} the eigenvalues of $\bbf{q}$ go to infinity).
	\end{enumerate}
\end{lemma}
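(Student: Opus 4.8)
The plan is to reduce every item to properties of the mutual information of the channel. Writing $I(\bbf{X};\bbf{Y})=\E\log\frac{dP(\bbf{Y}\mid\bbf{X})}{dP(\bbf{Y})}$, substituting $\bbf{Y}=\bbf{q}^{1/2}\bbf{X}+\bbf{Z}$ and using $\E[\bbf{Z}^{\intercal}\bbf{q}^{1/2}\bbf{X}]=0$, one obtains the identity
\[
I(\bbf{X};\bbf{Y})=\tfrac12\Tr\!\big(\bbf{q}\,\E[\bbf{X}\bbf{X}^{\intercal}]\big)-\psi_{P_X}(\bbf{q}),
\]
while the Nishimori identity (Proposition~\ref{prop:nishimori}) gives $F_{P_X}(\bbf{q})=\E[\bbf{X}\bbf{X}^{\intercal}]-\bbf{E}(\bbf{q})$, where $\bbf{E}(\bbf{q})=\E\big[(\bbf{X}-\langle\bbf{x}\rangle_{\bbf{q}})(\bbf{X}-\langle\bbf{x}\rangle_{\bbf{q}})^{\intercal}\big]\succeq\bbf{0}$ is the matrix minimum mean-square error. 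Several items are then immediate. For~\ref{item:F_0}, at $\bbf{q}=\bbf{0}$ the posterior equals the prior so $\langle\bbf{x}\rangle_{\bbf{0}}=\E[\bbf{X}]$. The lower bound in~\ref{item:range} is clear since $F_{P_X}(\bbf{q})=\E[\langle\bbf{x}\rangle_{\bbf{q}}\langle\bbf{x}\rangle_{\bbf{q}}^{\intercal}]$ is an average of positive semidefinite rank-one matrices; the upper bound is $\bbf{E}(\bbf{q})\succeq\bbf{0}$, and it is strict when $\Cov(\bbf{X})\succ\bbf{0}$ because for every $\bbf{w}\neq\bbf{0}$, $\bbf{w}^{\intercal}\bbf{E}(\bbf{q})\bbf{w}$ is the scalar MMSE of $\bbf{w}^{\intercal}\bbf{X}$ from $\bbf{Y}$, which is positive since the posterior is mutually absolutely continuous with $P_X$ and $\bbf{w}^{\intercal}\bbf{X}$ is not $P_X$-a.s.\ constant. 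For~\ref{item:lim_F}, since $\bbf{q}\succeq\lambda_{\min}(\bbf{q})\bbf{I}_n$, point~\ref{item:F_inc} reduces the claim to $\bbf{E}(t\bbf{I}_n)\to\bbf{0}$ as $t\to\infty$; and this holds because $\bbf{E}(t\bbf{I}_n)$ is the MMSE matrix of the equivalent channel $\bbf{X}+t^{-1/2}\bbf{Z}$, whose trace is at most $\E\|\bbf{X}-(\bbf{X}+t^{-1/2}\bbf{Z})\|^2=n/t$.

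For~\ref{item:psi_diff}, fix $\bbf{q}\succ\bbf{0}$ and a symmetric direction $\bbf{H}$. Differentiating under the expectation, $\partial_{\bbf{H}}\psi_{P_X}(\bbf{q})=\E\big\langle\bbf{X}^{\intercal}\bbf{H}\bbf{x}+\bbf{Z}^{\intercal}\bbf{D}\bbf{x}-\tfrac12\bbf{x}^{\intercal}\bbf{H}\bbf{x}\big\rangle_{\bbf{q}}$, where $\bbf{D}=\partial_{\bbf{H}}\bbf{q}^{1/2}$ is the symmetric solution of $\bbf{q}^{1/2}\bbf{D}+\bbf{D}\bbf{q}^{1/2}=\bbf{H}$. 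Gaussian integration by parts in $\bbf{Z}$ turns the middle term into $\Tr(\bbf{C}\bbf{D}\bbf{q}^{1/2})$ with $\bbf{C}=\E\big[\langle\bbf{x}\bbf{x}^{\intercal}\rangle_{\bbf{q}}-\langle\bbf{x}\rangle_{\bbf{q}}\langle\bbf{x}\rangle_{\bbf{q}}^{\intercal}\big]$; since $\bbf{C}$ and $\bbf{D}$ are symmetric, $[\bbf{C},\bbf{D}]$ is antisymmetric, hence $\Tr([\bbf{C},\bbf{D}]\bbf{q}^{1/2})=0$, which together with the Sylvester equation gives $\Tr(\bbf{C}\bbf{D}\bbf{q}^{1/2})=\tfrac12\Tr(\bbf{C}\bbf{H})$. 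Collecting the three terms and applying the Nishimori identity (so that $\E\langle\bbf{x}\bbf{x}^{\intercal}\rangle_{\bbf{q}}=\E[\bbf{X}\bbf{X}^{\intercal}]$ and $\E[\langle\bbf{x}\rangle_{\bbf{q}}\langle\bbf{x}\rangle_{\bbf{q}}^{\intercal}]=F_{P_X}(\bbf{q})$) yields $\partial_{\bbf{H}}\psi_{P_X}(\bbf{q})=\tfrac12\Tr(\bbf{H}\,F_{P_X}(\bbf{q}))$, i.e.\ $\nabla\psi_{P_X}(\bbf{q})=\tfrac12F_{P_X}(\bbf{q})$ on the interior; differentiability and the formula extend to the boundary of $S_n^+$ by convexity (established below) and the continuity of $F_{P_X}$.

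For~\ref{item:F_inc} I would use stochastic degradation. If $\bbf{q}_1\preceq\bbf{q}_2$ with $\bbf{q}_2\succ\bbf{0}$, then $\bbf{q}_1^{1/2}\bbf{q}_2^{-1/2}\bbf{Y}+\bbf{N}$ has the same law as $\bbf{q}_1^{1/2}\bbf{X}+\bbf{Z}'$, where $\bbf{Y}=\bbf{q}_2^{1/2}\bbf{X}+\bbf{Z}$ and $\bbf{N}$ is an independent centred Gaussian with covariance $\bbf{I}_n-\bbf{q}_1^{1/2}\bbf{q}_2^{-1}\bbf{q}_1^{1/2}$, which is positive semidefinite because $\bbf{q}_1\preceq\bbf{q}_2$. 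Hence the $\bbf{q}_1$-channel is a physical degradation of the $\bbf{q}_2$-channel, so by the data-processing inequality $\bbf{w}^{\intercal}\bbf{E}(\bbf{q}_1)\bbf{w}\geq\bbf{w}^{\intercal}\bbf{E}(\bbf{q}_2)\bbf{w}$ for all $\bbf{w}$, i.e.\ $F_{P_X}(\bbf{q}_1)\preceq F_{P_X}(\bbf{q}_2)$; the case of singular $\bbf{q}_2$ follows by applying this to $\bbf{q}_2+\epsilon\bbf{I}_n$ and letting $\epsilon\downarrow0$.

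Finally, for~\ref{item:psi_convex} and~\ref{item:psi_strictly} it suffices, by the displayed identity, to show that $\bbf{q}\mapsto I(\bbf{X};\bbf{Y})$ is concave, and strictly so when $\Cov(\bbf{X})\succ\bbf{0}$. I would compute the Hessian by differentiating $\partial_{\bbf{H}}\psi_{P_X}(\bbf{q})=\tfrac12\Tr(\bbf{H}\,F_{P_X}(\bbf{q}))$ once more along $\bbf{H}$: a second Gaussian integration by parts together with the Nishimori identity, and again the cancellation of the terms coming from $\partial_{\bbf{H}}\bbf{q}^{1/2}$ via the antisymmetry of the relevant commutator, should bring $\partial^2_{\bbf{H}}\psi_{P_X}(\bbf{q})$ into the form $\E\big[\mathcal{Q}_{\bbf{q},\bbf{Y}}(\bbf{H})\big]$, where for each value of $\bbf{Y}$ the map $\bbf{H}\mapsto\mathcal{Q}_{\bbf{q},\bbf{Y}}(\bbf{H})$ is a positive semidefinite quadratic form on symmetric matrices built from the posterior covariance $\Cov(\bbf{X}\mid\bbf{Y})$ (this matches, for $n=1$, the classical identity $\psi_{P_X}''(\gamma)=\tfrac12\E[\Var(X\mid Y)^2]$). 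This gives convexity, and strict convexity when $\Cov(\bbf{X})\succ\bbf{0}$ since then $\Cov(\bbf{X}\mid\bbf{Y})\succ\bbf{0}$ almost surely and $\mathcal{Q}_{\bbf{q},\bbf{Y}}$ is positive definite. \emph{The organization of this second-order computation — checking that the $\partial_{\bbf{H}}\bbf{q}^{1/2}$ contributions cancel and that the remainder is a genuine square — is the main obstacle.} An alternative, less self-contained, route is to invoke the known concavity of the mutual information of a linear vector Gaussian channel as a function of the noise precision matrix.
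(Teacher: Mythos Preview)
Your treatment of items~(\ref{item:F_inc})--(\ref{item:lim_F}) is correct and, for~(\ref{item:F_inc}), even more conceptual than the paper's: the paper proves~(\ref{item:F_inc}) by directly differentiating $t\mapsto\bbf{v}^{\intercal}F_{P_X}(\bbf{q}+t\bbf{q}')\bbf{v}$ and checking the derivative is a squared norm, whereas your channel-degradation argument is cleaner and requires no computation. Your derivation of~(\ref{item:psi_diff}) via the Sylvester equation and the vanishing of $\Tr([\bbf{C},\bbf{D}]\bbf{q}^{1/2})$ is also correct and rather elegant, though it only covers $\bbf{q}\succ\bbf{0}$; your extension to the boundary by convexity and continuity of $F_{P_X}$ is fine.

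The genuine gap is in~(\ref{item:psi_convex})--(\ref{item:psi_strictly}). You honestly flag the second-order computation as ``the main obstacle'' and do not carry it out: differentiating $\tfrac12\Tr(\bbf{H}\,F_{P_X}(\bbf{q}))$ once more in $\bbf{H}$ brings in $\partial_{\bbf{H}}\bbf{D}$, and it is not obvious a priori that the corresponding terms cancel. The paper avoids this entirely with a trick you did not see: instead of differentiating $\bbf{q}\mapsto\bbf{q}^{1/2}$, it writes, for $\bbf{q}_t=t\bbf{q}_1+(1-t)\bbf{q}_2$,
\[
\bbf{q}_t^{1/2}\bbf{Z}\ \stackrel{d}{=}\ \sqrt{t}\,\bbf{q}_1^{1/2}\bbf{Z}_1+\sqrt{1-t}\,\bbf{q}_2^{1/2}\bbf{Z}_2
\]
with $\bbf{Z}_1,\bbf{Z}_2$ independent standard Gaussians. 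This representation makes the dependence on $t$ completely explicit (no square-root differentiation needed), and two rounds of Gaussian integration by parts plus Nishimori give directly
\[
g''(t)=\tfrac12\,\E\,\Tr\!\big[\big((\bbf{q}_1-\bbf{q}_2)\,(\langle\bbf{x}\bbf{x}^{\intercal}\rangle_{\bbf{q}_t}-\langle\bbf{x}\rangle_{\bbf{q}_t}\langle\bbf{x}^{\intercal}\rangle_{\bbf{q}_t})\big)^2\big]\ \geq 0,
\]
the nonnegativity coming from $\Tr[(\bbf{A}\bbf{B})^2]\geq 0$ for $\bbf{A}$ symmetric and $\bbf{B}\succeq\bbf{0}$. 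Strict convexity then follows because, when $\Cov(\bbf{X})\succ\bbf{0}$, the posterior covariance $\langle\bbf{x}\bbf{x}^{\intercal}\rangle-\langle\bbf{x}\rangle\langle\bbf{x}^{\intercal}\rangle$ is almost surely positive definite (the posterior being equivalent to $P_X$). So the identity $\partial_{\bbf{H}}^2\psi_{P_X}(\bbf{q})=\tfrac12\E\Tr[(\bbf{H}\,\Cov(\bbf{X}\mid\bbf{Y}))^2]$ you conjecture is exactly right, and the paper's two-noise parametrisation is the clean way to obtain it.
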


\begin{proof}
	To prove~(\ref{item:psi_convex}) it suffices to show that $g:t \in [0,1] \mapsto \psi_{P_X}(t \bbf{q}_1 + (1-t)\bbf{q}_2)$ is convex, for all $\bbf{q}_1,\bbf{q}_2 \in S_k^{+}$.
	\\

	Let $\bbf{q}_1, \bbf{q}_2 \in S_k^{+}$. Let $\bbf{Z}_1, \bbf{Z}_2 \sim \mathcal{N}(0,\bbf{I}_k)$ two independent standard random variables, independent of any other random variable. Define $\bbf{q}_t =t \bbf{q}_1 + (1-t) \bbf{q}_2$. We have
	$$
	\bbf{q}_t^{1/2} \bbf{Z} = \sqrt{t} \bbf{q}_1^{1/2} \bbf{Z}_1 + \sqrt{1-t} \bbf{q}_2^{1/2} \bbf{Z}_2
	$$
	in law. Define for $t \in [0,1]$,
	$$
	g(t) = \psi_{P_X}(\bbf{q}_t) = \E \log \Big(\int dP_X(\bbf{x}) \exp\big( \sqrt{t} \bbf{Z}_1^{\intercal} \bbf{q}_1^{1/2} \bbf{x} + \sqrt{1-t} \bbf{Z}_2^{\intercal} \bbf{q}_1^{1/2} \bbf{x} + \bbf{X}^{\intercal} \bbf{q}_t \bbf{x} - \frac{1}{2} \bbf{x}^{\intercal} \bbf{q}_t \bbf{x} \big)\Big) \,.
	$$
	$g$ is continuous on $[0,1]$, differentiable on $(0,1)$. Let $t\in (0,1)$. 
	Define $\bbf{q} = \bbf{q}_1 - \bbf{q}_2$.
	We have
	\begin{align*}
		g'(t) &= 
		\E \Big\langle \frac{1}{2 \sqrt{t}} \bbf{Z}_1^{\intercal} \bbf{q}_1^{1/2} \bbf{x} - \frac{1}{2 \sqrt{1-t}} \bbf{Z}_2^{\intercal} \bbf{q}_2^{1/2} \bbf{x} + \bbf{X}^{\intercal} \bbf{q} \bbf{x} - \frac{1}{2} \bbf{x}^{\intercal} \bbf{q} \bbf{x} \Big\rangle_{\bbf{q}_t}
		\\
		&=
		\E \Big\langle \frac{1}{2} \bbf{x}^{\intercal} \bbf{q}_1 \bbf{x} - \frac{1}{2} \bbf{x}^{\intercal} \bbf{q}_2 \bbf{x} + \bbf{X}^{\intercal} \bbf{q} \bbf{x} - \frac{1}{2} \bbf{x}^{\intercal} \bbf{q} \bbf{x} \Big\rangle_{\bbf{q}_t} - \E \big\langle \frac{1}{2} \bbf{x}^{\intercal} \bbf{q}_1 \big\rangle_{\bbf{q}_t} \langle \bbf{x} \rangle_{\bbf{q}_t} + \E \big\langle \frac{1}{2} \bbf{x}^{\intercal} \bbf{q}_2 \big\rangle_{\bbf{q}_t} \langle \bbf{x} \rangle_{\bbf{q}_t}
		\\
		&=
		\frac{1}{2} \E \Big\langle \bbf{X}^{\intercal} \bbf{q} \bbf{x} \Big\rangle_{\bbf{q}_t} = \frac{1}{2} \Tr\big( \bbf{q} \E \langle \bbf{x} \bbf{X}^{\intercal} \rangle_{\bbf{q}_t} \big) \,,
	\end{align*}
	where we used successively Gaussian integration by parts and the Nishimori identity. This derivative is continuous in $0$ and $1$, so $g$ is also differentiable at those points. 
	This proves~(\ref{item:psi_diff}).
	Similar computations shows that for $t \in (0,1)$,
	\begin{align*}
		g''(t) = \frac{1}{2} \E \, \Tr\Big[  \big( \bbf{q} (\langle \bbf{x} \bbf{x}^{\intercal} \rangle_{\bbf{q}_t} - \langle \bbf{x} \rangle_{\bbf{q}_t} \langle \bbf{x}^{\intercal} \rangle_{\bbf{q}_t}) \big)^2 \Big] \geq 0 \,,
	\end{align*}
	by Lemma~\ref{lem:trace_convex} below. This proves~(\ref{item:psi_convex}). To prove~(\ref{item:psi_strictly}) is suffices to show that $g''(t) > 0$ when $\Cov(\bbf{X}) \succ \bbf{0}$ and $\bbf{q}_1 \neq \bbf{q_2}$. Suppose that $g''(t) = 0$. 
	Then $\Tr\Big[  \big( \bbf{q} (\langle \bbf{x} \bbf{x}^{\intercal} \rangle_{\bbf{q}_t} - \langle \bbf{x} \rangle_{\bbf{q}_t} \langle \bbf{x}^{\intercal} \rangle_{\bbf{q}_t}) \big)^2 \Big] = 0$ almost surely.
	\begin{lemma} \label{lem:as_succ}
		If $\Cov(\bbf{X}) \succ \bbf{0}$ then for all $\bbf{q} \in S_k^+$
		$$
		\langle \bbf{x} \bbf{x}^{\intercal} \rangle_{\bbf{q}} - \langle \bbf{x} \rangle_{\bbf{q}} \langle \bbf{x}^{\intercal} \rangle_{\bbf{q}} \succ \bbf{0}
		$$ almost surely.
	\end{lemma}
	\begin{proof}
		Let $\bbf{M} = \langle \bbf{x} \bbf{x}^{\intercal} \rangle_{\bbf{q}} - \langle \bbf{x} \rangle_{\bbf{q}} \langle \bbf{x}^{\intercal} \rangle_{\bbf{q}} \succeq \bbf{0}$. Suppose that there exists $\bbf{v} \in \R^k \setminus \{\bbf{0}\}$ such that $\bbf{M} \bbf{v} =\bbf{0}$. Then $\left\langle \| \bbf{v}^{\intercal}(\bbf{x} - \langle \bbf{x} \rangle_{\bbf{q}} ) \|^2 \right\rangle_{\bbf{q}} = 0$. Therefore $\bbf{v}^{\intercal} \bbf{x} = \bbf{v}^{\intercal} \langle \bbf{x} \rangle_{\bbf{q}}$, $\langle \cdot \rangle_{\bbf{q}}$-almost surely.
		However $P_X$  is almost-surely absolutely continuous with respect to $\langle \cdot \rangle_{\bbf{q}}$ (his Radon-Nikodym derivative is almost surely $>0$). This implies that $\bbf{v}^{\intercal} \bbf{X}$ is constant: $\bbf{v}^{\intercal} \Cov(\bbf{X}) \bbf{v} = 0$. We obtain a contradiction.
	\end{proof}
	\\

	Combining Lemma~\ref{lem:as_succ} and Lemma~\ref{lem:trace_convex} below, we obtain $\bbf{q} = 0$ which is absurd. This proves~(\ref{item:psi_strictly}).
	\\

	Let us now prove (\ref{item:F_inc}).
	Let $\bbf{q},\bbf{q}' \in S_k^+$ and $v\in \R^k \setminus \{\bbf{0}\}$. For $0 \leq t \leq 1$ we define $\bbf{q}_t = \bbf{q} + t\bbf{q}'$ and $h(t) = \bbf{v}^{\intercal} F_{P_X}(\bbf{q}_t) \bbf{v}$. 
	In order to prove (\ref{item:F_inc}) we have to show that $h$ is non-decreasing and is increasing in the case where $\Cov(\bbf{X}) \succ \bbf{0}$ and $\bbf{q}' \succ \bbf{0}$.
	Using Gaussian integration by parts and the Nishimori property, one can show that for $t \in (0,1)$,
	$$
	h'(t) = \frac{1}{2} \E \left\| \bbf{q}^{\prime 1/2}\left(\langle \bbf{x} \bbf{x}^{\intercal} \rangle_{\bbf{q}_t} - \langle \bbf{x} \rangle_{\bbf{q}_t} \langle \bbf{x}^{\intercal} \rangle_{\bbf{q}_t}\right) \bbf{v} \right\|^2 \geq 0 \,.
	$$
	Now, if $\bbf{q}' \succ \bbf{0}$, using Lemma~\ref{lem:as_succ} we see that $h'(t) >0$. This proves~(\ref{item:F_inc}).
	\\
(\ref{item:F_0}) is obvious.
	Notice that for $\bbf{q} \in S_k^+$
	$$
	\bbf{0} \preceq \E \left[ (\bbf{X} - \langle \bbf{x} \rangle_{\bbf{q}})(\bbf{X} - \langle \bbf{x} \rangle_{\bbf{q}})^{\intercal} \right] 
	= \E[\bbf{X} \bbf{X}^{\intercal}] - F_{P_X}(\bbf{q}) \,.
	$$
	This proves the first part of~(\ref{item:range}). The second part follows from~(\ref{item:F_inc}) and (\ref{item:lim_F}), that we prove now.
	Let $\bbf{q} \succ \bbf{0}$ and apply Lemma~\ref{lem:mmse_opt} with $f(\bbf{Y}) = \bbf{q}^{-1/2} \bbf{Y}$:
	$$
	\bbf{0} \preceq 
	\E[\bbf{X}\bbf{X}^{\intercal}] - F_{P_X}(\bbf{q})
	\preceq 
	\E \left[ (\bbf{X} - \bbf{q}^{-1/2} \bbf{Y})(\bbf{X} - \bbf{q}^{-1/2}\bbf{Y})^{\intercal} \right] 
	= \bbf{q}^{-1} \xrightarrow[\bbf{q} \to \infty]{} \bbf{0}.
	$$
\end{proof}

\begin{lemma} \label{lem:trace_convex}
	Let $\bbf{A}$ and $\bbf{B}$ be two symmetric matrices. Suppose that $\bbf{B}$ is semidefinite positive. Then
	$$
	\Tr\big[(\bbf{A}\bbf{B})^2\big] \geq 0 \,.
	$$
	Moreover, if $\bbf{B} \succ \bbf{0}$ we have equality if and only if $\bbf{A} = \bbf{0}$.
\end{lemma}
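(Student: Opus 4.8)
The plan is to diagonalize the problem through the positive semidefinite square root of $\bbf{B}$. Since $\bbf{B} \succeq \bbf{0}$, it admits a symmetric positive semidefinite square root $\bbf{B}^{1/2}$. First I would use the cyclic invariance of the trace to rewrite
$$
\Tr\big[(\bbf{A}\bbf{B})^2\big] = \Tr\big[\bbf{A}\bbf{B}^{1/2}\bbf{B}^{1/2}\bbf{A}\bbf{B}^{1/2}\bbf{B}^{1/2}\big] = \Tr\big[\bbf{B}^{1/2}\bbf{A}\bbf{B}^{1/2}\cdot\bbf{B}^{1/2}\bbf{A}\bbf{B}^{1/2}\big] = \Tr\big[\bbf{C}^2\big],
$$
where $\bbf{C} = \bbf{B}^{1/2}\bbf{A}\bbf{B}^{1/2}$ is symmetric because $\bbf{A}$ and $\bbf{B}^{1/2}$ are.

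Next I would observe that for a symmetric matrix $\bbf{C}$, $\Tr[\bbf{C}^2] = \sum_{i,j} C_{ij}^2 = \|\bbf{C}\|_F^2 \geq 0$ (equivalently, $\bbf{C}$ is orthogonally diagonalizable with real eigenvalues, so $\Tr[\bbf{C}^2]$ is the sum of their squares). This immediately gives $\Tr[(\bbf{A}\bbf{B})^2] \geq 0$, with equality if and only if $\bbf{C} = \bbf{B}^{1/2}\bbf{A}\bbf{B}^{1/2} = \bbf{0}$.

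Finally, for the equality case under $\bbf{B} \succ \bbf{0}$: then $\bbf{B}^{1/2}$ is invertible, so $\bbf{B}^{1/2}\bbf{A}\bbf{B}^{1/2} = \bbf{0}$ forces $\bbf{A} = \bbf{B}^{-1/2}\bbf{0}\,\bbf{B}^{-1/2} = \bbf{0}$; conversely $\bbf{A} = \bbf{0}$ trivially gives equality. There is no real obstacle here — the only thing to be slightly careful about is justifying the existence and symmetry of $\bbf{B}^{1/2}$ (standard spectral theorem) and the identity $\Tr[\bbf{C}^2] = \|\bbf{C}\|_F^2$ for symmetric $\bbf{C}$; both are routine.
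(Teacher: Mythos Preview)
Your proof is correct and follows essentially the same approach as the paper: introduce the symmetric matrix $\bbf{C} = \bbf{B}^{1/2}\bbf{A}\bbf{B}^{1/2}$ and use cyclic invariance of the trace to write $\Tr[(\bbf{A}\bbf{B})^2] = \Tr[\bbf{C}\bbf{C}^{\intercal}] \geq 0$. You actually spell out the equality case under $\bbf{B} \succ \bbf{0}$ (via invertibility of $\bbf{B}^{1/2}$), which the paper states but does not write out explicitly.
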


\begin{proof}
	$\bbf{B}$ is semidefinite positive, so it admits a square root $\bbf{B}^{1/2}$. Define $\bbf{C}=\bbf{B}^{1/2} \bbf{A} \bbf{B}^{1/2}$. Then
	$$
	\Tr\big[(\bbf{A}\bbf{B})^2\big] 
	= \Tr\big[\bbf{A}\bbf{B}\bbf{A}\bbf{B}\big] 
	= \Tr\big[\bbf{A}\bbf{B}^{1/2} \bbf{B}^{1/2} \bbf{A} \bbf{B}^{1/2} \bbf{B}^{1/2}\big] 
	= \Tr\big[\bbf{B}^{1/2} \bbf{A} \bbf{B}^{1/2} \bbf{B}^{1/2} \bbf{A} \bbf{B}^{1/2} \big] 
	= \Tr \big[\bbf{C} \bbf{C}^{\intercal} \big] \geq 0 \,.
	$$
\end{proof}

\subsection{Fixed points equations}\label{sec:gamma_non_empty}

\begin{proposition}\label{prop:gamma_non_empty}
	The set
	\begin{equation}
		\Gamma(\lambda,\alpha) = \left\{
			(\bbf{q}_u,\bbf{q}_v) \in (S_k^+)^2 \ \big| \
			\bbf{q}_u = F_{P_U}(\lambda \alpha \bbf{q}_v) \text{ and }
			\bbf{q}_v = F_{P_V}(\lambda \bbf{q}_u)
		\right\} 
	\end{equation}
	is non-empty.
\end{proposition}

\begin{proof}
 $F_{P_U}$ and $F_{P_V}$ take their values (by Lemma~\ref{lem:general_convex}~(\ref{item:range})) in 
$$
C = \big\{ \bbf{M} \in S_k^+ \ | \ \bbf{M} \preceq \E_{P_U}[\bbf{U} \bbf{U}^{\intercal}] + \E_{P_V}[\bbf{V} \bbf{V}^{\intercal}] \big\} \,,
$$
which is convex and compact.
The function $f: (\bbf{q}_u,\bbf{q}_v) \mapsto (F_{P_U}(\lambda \alpha \bbf{q}_v),F_{P_V}(\lambda \bbf{q}_u))$ is continuous from $C$ to $C$. Brouwer's Theorem gives the existence of a fixed point of $f$: $\Gamma(\lambda,\alpha) \neq \emptyset$.
\end{proof}

\subsection{The min-max formula} \label{sec:min_max}

Recall that $\Gamma(\lambda,\alpha)$ is defined by Definition~\ref{def:gamma1} (for $k=1$) and Definition~\ref{def:gammak} (for $k \geq 1$).

\begin{proposition} \label{prop:min_max}
	Suppose that $\Cov(\bbf{V}) \succ \bbf{0}$. Then
	\begin{equation}\label{eq:sup_inf}
		\sup_{(\bbf{q}_u,\bbf{q}_v) \in \Gamma(\lambda,\alpha)} \mathcal{F}(\lambda,\alpha,\bbf{q}_u,\bbf{q}_v)
		=
		\sup_{\bbf{q}_v \in S_k^+} \inf_{\bbf{q}_u \in S_k^+} \mathcal{F}(\lambda,\alpha,\bbf{q}_u,\bbf{q}_v)
	\end{equation}
	Moreover, these extrema are achieved over the same couples $(\bbf{q}_u,\bbf{q}_v) \in \Gamma(\lambda,\alpha)$.
\end{proposition}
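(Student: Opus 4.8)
The plan is to establish the two inequalities between $\sup_{(\bbf{q}_1,\bbf{q}_2)\in\Gamma(\lambda)}\mathcal{F}(\lambda,\bbf{q}_1,\bbf{q}_2)$ and $\sup_{\bbf{q}_1\in S_k^+}\inf_{\bbf{q}_2\in S_k^+}\mathcal{F}(\lambda,\bbf{q}_1,\bbf{q}_2)$ separately, the main tools being the identity $\nabla_{\bbf{q}_2}\mathcal{F}(\lambda,\bbf{q}_1,\bbf{q}_2)=\tfrac{\lambda}{2}\big(F_{P_V}(\lambda\bbf{q}_2)-\bbf{q}_1\big)$ (Lemma~\ref{lem:general_convex}~(\ref{item:psi_diff})) together with the convexity, and---thanks to the hypothesis $\Cov(\bbf{V})\succ\bbf{0}$---the \emph{strict} convexity (Lemma~\ref{lem:general_convex}~(\ref{item:psi_convex})--(\ref{item:psi_strictly})), of $\bbf{q}_2\mapsto\mathcal{F}(\lambda,\bbf{q}_1,\bbf{q}_2)$ on $S_k^+$. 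Set $g(\bbf{q}_1)=\inf_{\bbf{q}_2\in S_k^+}\mathcal{F}(\lambda,\bbf{q}_1,\bbf{q}_2)$. The inequality $\sup_{\Gamma(\lambda)}\mathcal{F}\le\sup_{\bbf{q}_1}g$ is immediate: if $(\bbf{q}_1,\bbf{q}_2)\in\Gamma(\lambda)$ then $\bbf{q}_1=F_{P_V}(\lambda\bbf{q}_2)$, so $\nabla_{\bbf{q}_2}\mathcal{F}(\lambda,\bbf{q}_1,\bbf{q}_2)=\bbf{0}$; since $\bbf{q}_2\mapsto\mathcal{F}(\lambda,\bbf{q}_1,\bbf{q}_2)$ is convex and differentiable on $S_k^+$, the vanishing of its gradient forces $\bbf{q}_2$ to be a global minimizer over $S_k^+$, hence $\mathcal{F}(\lambda,\bbf{q}_1,\bbf{q}_2)=g(\bbf{q}_1)\le\sup_{\bbf{q}_1'}g(\bbf{q}_1')$, and one takes the supremum over $\Gamma(\lambda)$. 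This already shows that a maximizer of $\mathcal{F}$ over $\Gamma(\lambda)$ realizes, coordinatewise, both extrema on the right-hand side of \eqref{eq:sup_inf} once the two values are known to coincide.

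For the converse $\sup_{\bbf{q}_1}g\le\sup_{\Gamma(\lambda)}\mathcal{F}$ I would first reduce to a compact set: for $\bbf{q}_1\not\preceq\E[\bbf{V}\bbf{V}^{\intercal}]$ one has $g(\bbf{q}_1)=-\infty$ (push $\bbf{q}_2=t\bbf{w}\bbf{w}^{\intercal}\to\infty$ along a direction $\bbf{w}$ with $\bbf{w}^{\intercal}(\E[\bbf{V}\bbf{V}^{\intercal}]-\bbf{q}_1)\bbf{w}<0$, bounding $\psi_{P_V}$ by means of $F_{P_V}\preceq\E[\bbf{V}\bbf{V}^{\intercal}]$, Lemma~\ref{lem:general_convex}~(\ref{item:range})), so the outer supremum may be restricted to the compact set $\{\bbf{q}_1\in S_k^+:\bbf{q}_1\preceq\E[\bbf{V}\bbf{V}^{\intercal}]\}$. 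On it $g$ is upper semicontinuous (an infimum of continuous functions) and $g(\bbf{0})=0$, so it attains its maximum at some $\bbf{q}_1^\star$. A coercivity computation shows that the inner infimum at $\bbf{q}_1^\star$ is attained: the recession function of $\bbf{q}_2\mapsto\mathcal{F}(\lambda,\bbf{q}_1^\star,\bbf{q}_2)$ in a direction $\bbf{d}\succeq\bbf{0}$ equals $\tfrac{\lambda}{2}\Tr[(\E[\bbf{V}\bbf{V}^{\intercal}]-\bbf{q}_1^\star)\bbf{d}]$ (using $F_{P_V}(\bbf{q})\to\E[\bbf{V}\bbf{V}^{\intercal}]$, Lemma~\ref{lem:general_convex}~(\ref{item:lim_F}), applied after ``revealing'' the components of $\bbf{V}$ in the range of $\bbf{d}$), which is positive for every $\bbf{d}\neq\bbf{0}$ as soon as $\bbf{q}_1^\star\prec\E[\bbf{V}\bbf{V}^{\intercal}]$. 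Denote by $\bbf{q}_2^\star$ the inner minimizer, unique by strict convexity.

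It then remains to upgrade the cone optimality conditions at $(\bbf{q}_1^\star,\bbf{q}_2^\star)$ to the two fixed-point equations. First-order optimality of $\bbf{q}_2^\star$ over $S_k^+$ gives $\bbf{a}:=F_{P_V}(\lambda\bbf{q}_2^\star)-\bbf{q}_1^\star\succeq\bbf{0}$ with $\bbf{a}\bbf{q}_2^\star=\bbf{0}$; replacing $\bbf{q}_1^\star$ by $\widetilde{\bbf{q}}_1:=F_{P_V}(\lambda\bbf{q}_2^\star)=\bbf{q}_1^\star+\bbf{a}\succeq\bbf{q}_1^\star$ leaves the value unchanged, since $\Tr[\widetilde{\bbf{q}}_1\bbf{q}_2^\star]=\Tr[\bbf{q}_1^\star\bbf{q}_2^\star]$ and $\psi_{P_U}$ is nondecreasing for the Loewner order (its gradient $\tfrac12 F_{P_U}$ being $\succeq\bbf{0}$), while $\nabla_{\bbf{q}_2}\mathcal{F}(\lambda,\widetilde{\bbf{q}}_1,\bbf{q}_2^\star)=\bbf{0}$ keeps $\bbf{q}_2^\star$ optimal; so we may assume $\bbf{q}_1^\star=F_{P_V}(\lambda\bbf{q}_2^\star)$. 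Uniqueness of the inner minimizer makes $g$ differentiable at $\bbf{q}_1^\star$ with $\nabla g(\bbf{q}_1^\star)=\tfrac{\lambda}{2}(F_{P_U}(\lambda\bbf{q}_1^\star)-\bbf{q}_2^\star)$ by an envelope (Danskin) argument, as already used via \cite{milgrom2002envelope} for Proposition~\ref{prop:csq}; optimality of $\bbf{q}_1^\star$ over $S_k^+$ then gives $\bbf{b}:=\bbf{q}_2^\star-F_{P_U}(\lambda\bbf{q}_1^\star)\succeq\bbf{0}$ with $\bbf{b}\bbf{q}_1^\star=\bbf{0}$, and the mirror replacement $\bbf{q}_2^\star\leadsto\widetilde{\bbf{q}}_2:=F_{P_U}(\lambda\bbf{q}_1^\star)=\bbf{q}_2^\star-\bbf{b}\preceq\bbf{q}_2^\star$ (now using $\psi_{P_V}$ nondecreasing) produces another inner minimizer, hence $\widetilde{\bbf{q}}_2=\bbf{q}_2^\star$, i.e.\ $\bbf{q}_2^\star=F_{P_U}(\lambda\bbf{q}_1^\star)$. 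Thus $(\bbf{q}_1^\star,\bbf{q}_2^\star)\in\Gamma(\lambda)$ and $\sup_{\bbf{q}_1}g=\mathcal{F}(\lambda,\bbf{q}_1^\star,\bbf{q}_2^\star)\le\sup_{\Gamma(\lambda)}\mathcal{F}$, proving \eqref{eq:sup_inf}; the ``moreover'' statement follows by reading off the equality cases. The general $\alpha>0$ case is identical after inserting the factors of $\alpha$ (the inner gradient becomes $\tfrac{\lambda\alpha}{2}(F_{P_V}(\lambda\bbf{q}_2)-\bbf{q}_1)$ and $F_{P_U}$ is evaluated at $\lambda\alpha\bbf{q}_1$).

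The step I expect to be the main obstacle is the boundary analysis hidden above: ensuring that the inner infimum is attained at a point satisfying the \emph{exact} equation $\bbf{q}_1^\star=F_{P_V}(\lambda\bbf{q}_2^\star)$ rather than only the Loewner inequality coming from the cone-KKT conditions. The replacement trick disposes of this \emph{provided} $\bbf{q}_1^\star\prec\E[\bbf{V}\bbf{V}^{\intercal}]$, so the inner problem is coercive and $g$ is differentiable there; the residual possibility---that the maximizer sits on the face where $\E[\bbf{V}\bbf{V}^{\intercal}]-\bbf{q}_1^\star$ is singular and the inner minimizers escape to infinity---must be excluded by a separate dimension-reduction argument that splits off the range of $\E[\bbf{V}\bbf{V}^{\intercal}]-\bbf{q}_1^\star$. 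All of this rests on $\Cov(\bbf{V})\succ\bbf{0}$, which supplies strict convexity of $\psi_{P_V}$, uniqueness of the inner minimizer, and the strict bound $F_{P_V}(\bbf{q})\prec\E[\bbf{V}\bbf{V}^{\intercal}]$ used in the coercivity step.
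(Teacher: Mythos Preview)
Your overall strategy matches the paper's: the easy inequality via vanishing gradient and convexity in $\bbf{q}_2$, the restriction of the outer supremum to $K_V=\{\bbf{q}_1\preceq\Sigma_V\}$, the envelope theorem for $g$, and the upgrade of cone KKT conditions to the fixed-point equations. Your two ``replacement tricks'' are precisely the paper's Case~1 computations (it shows $\phi_{\bbf{q}_1^*}(F_{P_U}(\lambda\bbf{q}_1^*))\le\phi_{\bbf{q}_1^*}(\bbf{q}_2^*)$ and then $f(F_{P_V}(\lambda\bbf{q}_2^*))>f(\bbf{q}_1^*)$ unless equality holds), rephrased somewhat more cleanly.

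The genuine gap is exactly where you flag it: ruling out a maximizer $\bbf{q}_1^\star\in D_V\setminus\stackrel{\circ}{K_V}$, where the inner infimum is finite but need not be attained (strict convexity alone does not force attainment; think of $e^{-x}$ on $[0,\infty)$). Your replacement trick~1 presupposes a minimizer $\bbf{q}_2^\star$, so it cannot even start in this case. The paper does \emph{not} use a dimension-reduction argument here; instead it approximates $\bbf{q}_1^\star$ from the interior by $\bbf{q}_1^{(n)}=\delta_n\bbf{q}_1^\star$ with $\delta_n\uparrow 1$, takes the (now existing) inner minimizers $\bbf{q}_2^{(n)}$, shows via Lemma~\ref{lem:general_convex}~(\ref{item:range}) and~(\ref{item:lim_F}) that some eigenvalues of $\bbf{q}_2^{(n)}$ must diverge, and then exploits the concavity of $\bbf{q}_1\mapsto\inf_{\bbf{q}_2}\{\psi_{P_V}(\lambda\bbf{q}_2)-\tfrac{\lambda}{2}\Tr[\bbf{q}_1\bbf{q}_2]\}$ (whose gradient at $\bbf{q}_1^{(n)}$ is $-\tfrac{\lambda}{2}\bbf{q}_2^{(n)}$) together with the inner KKT identity $\Tr[\bbf{q}_1^{(n)}\bbf{q}_2^{(n)}]=\Tr[F_{P_V}(\lambda\bbf{q}_2^{(n)})\bbf{q}_2^{(n)}]\to\infty$ to force $f(\bbf{q}_1^{(n)})>f(\bbf{q}_1^\star)$ for large $n$, contradicting maximality. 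This interior-approximation-plus-concavity contradiction is the missing piece; a dimension-reduction would have to contend with the fact that $\bbf{q}_2$ ranges over all of $S_k^+$ (not just a face), which makes splitting off the degenerate directions of $\Sigma_V-\bbf{q}_1^\star$ awkward.
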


\begin{proof}
	$\Gamma(\lambda,\alpha)$ is a compact set. Let $(\bbf{q}_u^*,\bbf{q}_v^*) \in \Gamma(\lambda,\alpha)$ that achieves the supremum of the left-hand side of~\eqref{eq:sup_inf}. The function $\bbf{q}_u \in S_k^+ \mapsto \mathcal{F}(\lambda,\alpha,\bbf{q}_u,\bbf{q}_v^*)$ is convex (by Lemma~\ref{lem:general_convex}) and his gradient at $\bbf{q}_u^*$ is equal to
	$$
	\frac{\lambda \alpha}{2}(F_{P_V}(\lambda \bbf{q}_u^*) - \bbf{q}_v^*)=0 \,,
	$$
	because $(\bbf{q}_u^*,\bbf{q}_v^*) \in \Gamma(\lambda,\alpha)$. Thus $\displaystyle \mathcal{F}(\lambda,\alpha,\bbf{q}_u^*,\bbf{q}_v^*) = \inf_{\bbf{q}_u \in S_k^+} \mathcal{F}(\lambda,\alpha,\bbf{q}_u,\bbf{q}_v^*) \leq \sup_{\bbf{q}_v \in S_k^+} \inf_{\bbf{q}_u \in S_k^+} \mathcal{F}(\lambda,\alpha,\bbf{q}_u,\bbf{q}_v)$.
	\\

	We will denote $\Sigma_V = \E_{P_V}[\bbf{V}\bbf{V}^{\intercal}]$. To prove the converse inequality, we will first show that on can restrict the supremum in $\bbf{q}_v$ on the compact set
	$$
	K_V = \left\{ \bbf{q}_v \in S_k^+ \ | \ \bbf{q}_v \preceq \Sigma_{V} \right\} \,.
	$$
	Indeed, if $\bbf{q}_v \in S_k^+ \setminus K_V$, then there exists $\bbf{v}\in \R^k \setminus \{0\}$ such that $\bbf{v}^{\intercal} \bbf{q}_v \bbf{v} > \bbf{v}^{\intercal} \Sigma_V \bbf{v}$. We define, for $t >0$
	$$
	\phi(t) = \mathcal{F}(\lambda,\alpha, t \bbf{v} \bbf{v}^{\intercal},\bbf{q}_v) \,.
	$$
	$\phi$ is differentiable over $\R_+^*$ and for $t>0$
	$$
	\phi'(t) = \frac{\lambda \alpha}{2} \Tr[F_{P_V}(\lambda t \bbf{v} \bbf{v}^{\intercal} ) \bbf{v}\bbf{v}^{\intercal} - \bbf{q}_v \bbf{v} \bbf{v}^{\intercal}]
	= \frac{\lambda \alpha}{2} \left(
		\bbf{v}^{\intercal} F_{P_V}(t \bbf{v} \bbf{v}^{\intercal} ) \bbf{v} - \bbf{v}^{\intercal} \bbf{q}_v \bbf{v}
	\right)
	\leq \frac{\lambda \alpha}{2} \bbf{v}^{\intercal} (\Sigma_V - \bbf{q}_v) \bbf{v} \,.
	$$
	Therefore, $\phi(t) \xrightarrow[ t \to +\infty ]{} - \infty$ and $\displaystyle \inf_{\bbf{q}_u \in S_k^+} \mathcal{F}(\lambda,\alpha,\bbf{q}_u,\bbf{q}_v) = -\infty$. We can thus restrict the supremum to $K_V$.
	\\

	For $\bbf{q}_v \in S_k^+$ we define $\phi_{\bbf{q}_v}: \bbf{q}_u \in S_k^+ \mapsto \mathcal{F}(\lambda,\alpha,\bbf{q}_u,\bbf{q}_v)$ and $f(\bbf{q}_v) = \inf_{\bbf{q}_u} \phi_{\bbf{q}_v}(\bbf{q}_u)$.
	Let $\stackrel{\circ}{K_V}$ denote the interior of $K_V$ in $S_k^+$, that is
	$$
	\stackrel{\circ}{K_V}
	= \left\{ \bbf{M} \in S_k^+ \ \middle| \ \bbf{M} \prec \Sigma_V \right\} \,.
	$$
	\begin{lemma} \label{lem:unique_minimizer}
		\begin{itemize}
			\item If $\bbf{q}_v \in \stackrel{\circ}{K_V}$, then $\displaystyle \inf_{\bbf{q}_u \in S_k^+} \mathcal{F}(\lambda,\alpha, \bbf{q}_u, \bbf{q}_v)$ is achieved at a unique $\bbf{q}_u^*(\bbf{q}_v) \in S_k^+$. Moreover,
				\begin{equation} \label{eq:gradient_phi}
					F_{P_V}(\lambda \bbf{q}_u^*(\bbf{q}_v)) \succeq \bbf{q}_v
					\qquad
					\text{and}
					\qquad
					\Tr\big[\bbf{q}_u^*(F_{P_V}(\lambda \bbf{q}_u^*(\bbf{q}_v)) - \bbf{q}_v)\big] = 0.
				\end{equation}
			\item  The function $\displaystyle f: \bbf{q}_v \in \stackrel{\circ}{K_V} \mapsto \inf_{\bbf{q}_u \in S_k^+} \mathcal{F}(\lambda,\alpha, \bbf{q}_u, \bbf{q}_v)$ is differentiable, with gradient given by
				\begin{equation} \label{eq:gradient_f}
					\nabla f (\bbf{q}_v) = \frac{\lambda \alpha}{2} ( F_{P_U}(\lambda \alpha \bbf{q}_v) - \bbf{q}_u^*(\bbf{q}_v)) \,.
				\end{equation}
		\end{itemize}
	\end{lemma}
	\begin{proof} Let $\bbf{q}_v \in \stackrel{\circ}{K_V}$ and define $\phi_{\bbf{q}_v}:  \bbf{q}_u \mapsto \mathcal{F}(\lambda,\alpha, \bbf{q}_u, \bbf{q}_v)$. $\Cov(\bbf{V}) \succ \bbf{0}$ thus by Lemma~\ref{lem:general_convex}, $\phi_{\bbf{q}_v}$ is strictly convex with gradient
		$$
		\nabla \phi_{\bbf{q}_v}(\bbf{q}_u) = \frac{\lambda}{2}(F_{P_V}(\lambda \bbf{q}_u) - \bbf{q}_v) \xrightarrow[\bbf{q}_u \to \infty]{} \frac{\lambda}{2}(\Sigma_V - \bbf{q}_v) \succ 0 \,.
		$$
		Consequently, $\phi_{\bbf{q}_v}(\bbf{q}_u) \xrightarrow[\bbf{q}_u \to \infty]{} + \infty$. $\phi_{\bbf{q}_v}$ admits therefore a unique minimizer $\bbf{q}_u^*(\bbf{q}_v)$.~\eqref{eq:gradient_phi} follows from the optimality conditions at $\bbf{q}_u^*(\bbf{q}_v)$.
		\\

		Let $\bbf{Q}_v \in \stackrel{\circ}{K_V}$. We are going to show that $f$ is differentiable on $\{ \bbf{q}_v \in S_k^+ \ | \ \bbf{q}_v \prec \bbf{Q}_v\}$. $\bbf{Q}_v \prec \Sigma_V$ so by Lemma~\ref{lem:general_convex} we can find $\bbf{Q}_u \in S_k^+$ such that for all $\bbf{q}_u \succeq \bbf{Q}_u$, $F_{P_V}(\lambda \bbf{q}_u) \succ \bbf{Q}_v$.
		Let now $\bbf{0} \preceq \bbf{q}_v \prec \bbf{Q}_v$. For all $\bbf{q}_u \succeq \bbf{Q}_u$
		$$
		\nabla \phi_{\bbf{q}_v}(\bbf{q}_u) = \frac{\lambda\alpha}{2}(F_{P_V}(\lambda \bbf{q}_u) - \bbf{q}_v) \succ \bbf{0} \,.
		$$
		Consequently, $\bbf{q}_u^* \preceq \bbf{Q}_u$. For $\bbf{q}_v \in  \{ \bbf{q}_v \in S_k^+ \ | \ \bbf{q}_v \prec \bbf{Q}_v\}$, we have shown that the infimum is achieved at a unique point of a compact set. Thus, by an ``envelope theorem'' (Corollary~4 from~\cite{milgrom2002envelope}), $f$ is differentiable on $\{ \bbf{q}_v \in S_k^+ \ | \ \bbf{q}_v \prec \bbf{Q}_v\}$ with gradient given by~\eqref{eq:gradient_f}. The lemma follows.
	\end{proof}
	\\

	Let now 
	$$
	D_V = 
	\left\{
	\bbf{q}_v \in S_k^+ \ \middle| \ \inf_{\bbf{q}_u \in S_k^+} \mathcal{F}(\lambda,\alpha, \bbf{q}_u, \bbf{q}_v) \text{ is finite} \right\} \,.
	$$
	From what we have seen until now, $\stackrel{\circ}{K_V} \subset D_V \subset K_V$. Notice that $f: \bbf{q}_v \mapsto \inf_{\bbf{q}_u \in S_k^+} \mathcal{F}(\lambda,\alpha, \bbf{q}_u, \bbf{q}_v)$ is continuous over $D_V$. Indeed for $\bbf{q}_v \in D_V$
	$$
	\inf_{\bbf{q}_u \in S_k^+} \mathcal{F}(\lambda,\alpha, \bbf{q}_u, \bbf{q}_v) = \psi_{P_U}(\lambda \alpha \bbf{q}_v) + \inf_{\bbf{q}_u \in S_k^+} \left\{ \alpha \psi_{P_V}(\lambda\bbf{q}_u) - \frac{\alpha\lambda}{2} \Tr[\bbf{q}_v \bbf{q}_u] \right\} \,,
	$$
	and the second term of the right-hand side is a concave function of $\bbf{q}_v$ (as an infimum of linear functions) and is thus continuous. 
	Moreover, one verify easily that if $f(\bbf{q}_v)=-\infty$ for some $\bbf{q}_v \in K_V$, then $f(\bbf{q}_v') \xrightarrow[\bbf{q}_v' \in D_V \to \bbf{q}_v]{} -\infty$. Consequently the supremum of $f$ is achieved at some $\bbf{q}_v^* \in D_V$.
	\\

	{\bfseries Case 1:} $\bbf{q}_v^* \in \stackrel{\circ}{K_V}$.
	\\
	By Lemma~\ref{lem:unique_minimizer} above, the minimum of $\phi_{\bbf{q}_v^*}$ is thus achieved at a unique $\bbf{q}_u^* \in S_k^+$. The optimality condition of $f$ at $\bbf{q}_v^*$ gives:
	$$
	\frac\alpha {\lambda}{2}(F_{P_U}(\lambda \alpha \bbf{q}_v^*) - \bbf{q}_u^*) = 
	\nabla f(\bbf{q}_v^*) \preceq 0
	\quad \text{and} \quad \Tr[\bbf{q}_v^* \nabla f(\bbf{q}_v^*)] = 0 \,.
	$$
	Compute now
	\begin{align*}
		\phi_{\bbf{q}_v^*}(F_{P_U}(\lambda \alpha \bbf{q}_v^*))
		&= \psi_{P_U}(\lambda \alpha \bbf{q}_v^*) + \alpha \psi_{P_V}(\lambda F_{P_U}(\lambda \bbf{q}_v^*)) - \frac{\lambda \alpha}{2} \Tr[\bbf{q}_v^* F_{P_U}(\lambda \bbf{q}_v^*)]
		\\
		&\leq \psi_{P_U}(\lambda \alpha \bbf{q}_v^*) + \alpha \psi_{P_V}(\lambda \bbf{q}_u^*)  - \frac{\lambda \alpha}{2} \Tr[\bbf{q}_v^* \bbf{q}_u^*] + \frac{\lambda \alpha}{2} \Tr[\bbf{q}_v^* (\bbf{q}_u^* - F_{P_U}(\lambda \alpha \bbf{q}_v^*))] 
		&& (\text{because} \ \bbf{q}_u^* \succeq F_{P_U}(\lambda \alpha \bbf{q}_v^*) \, )
		\\
		&= \phi_{\bbf{q}_v^*}( \bbf{q}_u^*) && (\text{using~\eqref{eq:gradient_phi}}).
	\end{align*}
	By Lemma~\ref{lem:unique_minimizer}, $\bbf{q}_u^*$ is the unique minimizer of $\phi_{\bbf{q}_v^*}$, therefore $\bbf{q}_u^* = F_{P_U}(\lambda \alpha \bbf{q}_v^*)$. By~\eqref{eq:gradient_phi} we have $F_{P_V}(\lambda \bbf{q}_u^*) \succeq \bbf{q}_v^*$. Suppose that $\bbf{q}_v^* \neq F_{P_V}(\lambda \bbf{q}_u^*)$. Then by monotonicity of $\psi_{P_U}$ (Lemma~\ref{lem:general_convex}) we have $\psi_{P_U}(\lambda \alpha\bbf{q}_v^*) < \psi_{P_U}(\lambda \alpha F_{P_V}(\lambda \bbf{q}_u^*))$. Thus
	\begin{align*}
		f(F_{P_V}(\lambda \bbf{q}_u^*)) 
		&> \psi_{P_U}(\lambda \alpha \bbf{q}_v^*) + \inf_{\bbf{q}_u \in S_k^+} \alpha \psi_{P_V}(\lambda \bbf{q}_u) - \frac{\alpha \lambda}{2} \Tr[F_{P_V}(\lambda \bbf{q}_u^*) \bbf{q}_u]
		\\
		&=\psi_{P_U}(\lambda \alpha \bbf{q}_v^*) + \inf_{\bbf{q}_u \in S_k^+} \alpha \psi_{P_V}(\lambda \bbf{q}_u) - \frac{\lambda\alpha}{2} \Tr[\bbf{q}_v^* \bbf{q}_u] && \text{(because of~\eqref{eq:gradient_phi})}
		\\
		&= f(\bbf{q}_v^*)\,,
	\end{align*}
	which is absurd because $\bbf{q}_v^*$ maximizes $f$. We conclude that $\bbf{q}_v^* = F_{P_V}(\lambda \bbf{q}_u^*)$ and $(\bbf{q}_u^*,\bbf{q}_v^*) \in \Gamma(\lambda,\alpha)$ and therefore
	$$
	\sup_{\bbf{q}_v \in S_k^+ } \inf_{\bbf{q}_u \in S_k^+} \mathcal{F}(\lambda,\alpha,\bbf{q}_u,\bbf{q}_v)
	= \mathcal{F}(\lambda,\alpha,\bbf{q}_u^*,\bbf{q}_v^*)
	\leq
	\sup_{(\bbf{q}_v,\bbf{q}_u) \in \Gamma(\lambda,\alpha)} \mathcal{F}(\lambda,\alpha,\bbf{q}_u,\bbf{q}_v) \,.
	$$

	\vspace{0.3cm}
	{\bfseries Case 2:} $\bbf{q}_v^* \in D_V \setminus \stackrel{\circ}{K_V}$.
	\\
	Let $(\delta_n)_n$ be an increasing positive sequence that converges to $1$. For $n \geq \N$ we define $\bbf{q}_v^{(n)} = \delta_n \bbf{q}_v^* \in \stackrel{\circ}{K_V}$.
	Let $\bbf{q}_u^{(n)} \in S_k^+$ be the minimum of $\phi_{\bbf{q}_v^{(n)}}$. By continuity of $f$ at $\bbf{q}_v^*$:
	$$
	\mathcal{F}(\lambda,\alpha,\bbf{q}_u^{(n)},\bbf{q}_v^{(n)}) \xrightarrow[n \to \infty]{} \sup_{\bbf{q}_v \in S_k^+ } \inf_{\bbf{q}_u \in S_k^+} \mathcal{F}(\lambda,\alpha,\bbf{q}_u,\bbf{q}_v)\,.
	$$
	Let $(\bbf{e}_1^{(n)}, \dots, \bbf{e}_k^{(n)}) \in (\R^k)^k$ be an orthonormal basis of eigenvectors of $\bbf{q}_u^{(n)}$ and $(\mu_1^{(n)}, \dots, \mu_k^{(n)}) \in \R_+^k$ be the associated eigenvalues. 
	Without loss of generalities, one can assume that 
	$ (\bbf{e}_1^{(n)},\dots, \bbf{e}_k^{(n)}) $ converges to a orthonormal basis $(\bbf{e}_1, \dots, \bbf{e}_k)$, and $(\mu_1^{(n)}, \dots, \mu_k^{(n)})$ converges to $(\mu_1, \dots, \mu_k)$ where $\mu_1 \geq \mu_2 \geq \dots \geq \mu_k$ are in $\R_+ \cup \{+\infty \}$. 
	We also denote $\bbf{R}^{(n)} = (\bbf{e}_1^{(n)} | \dots | \bbf{e}_k^{(n)}) \xrightarrow[n \to \infty]{} \bbf{R} = (\bbf{e}_1| \dots | \bbf{e}_k)$.
	Suppose that $\mu_1 < + \infty$. Then $\bbf{q}_u^{(n)} \xrightarrow[n \to \infty]{} \bbf{R} \bbf{Diag}(\mu_1, \dots, \mu_k) \bbf{R}^{\intercal}$ and $F_{P_V}(\lambda \bbf{q}_u^{(n)}) \xrightarrow[n \to \infty]{} F_{P_V}(\lambda \bbf{R} \bbf{Diag}(\mu_1, \dots, \mu_k) \bbf{R}^{\intercal})$. Equation~\eqref{eq:gradient_phi} and Lemma~\ref{lem:general_convex} give then
	$$
	\Sigma_V \succ F_{P_V}(\lambda \bbf{R} \bbf{Diag}(\mu_1, \dots, \mu_k) \bbf{R}^{\intercal}) \succeq \bbf{q}_v^* \,,
	$$
	which is absurd. Therefore, there exists $r \in \{1, \dots, k \}$ such that $\mu_1 = \dots = \mu_r =+ \infty$ and $\mu_{r+1}, \dots, \mu_k < \infty$.
	\begin{lemma}	
	For $1 \leq i \leq r$
	$$
	\bbf{e}_i^{(n)\intercal} F_{P_V}(\lambda \bbf{q}_u^{(n)}) \bbf{e}_i^{(n)}
	\xrightarrow[n \to \infty]{}
	\bbf{e}_i^{\intercal} \Sigma_V \bbf{e}_i > 0 \,.
	$$
	\end{lemma}	
	\begin{proof}
		By Lemma~\ref{lem:mmse_opt} we have
		\begin{equation} \label{eq:comm}
		\bbf{0} \preceq \Sigma_V - F_{P_V}(\lambda \bbf{q}_u^{(n)}) \preceq \E \Big[(\bbf{V} - h(\bbf{Y}))(\bbf{V} - h(\bbf{Y}))^{\intercal}\Big],
	\end{equation}
		for all measurable function $h$,
		where the last expectation is with respect $\bbf{V}$ and $\bbf{Y} = (\bbf{q}_u^{(n)})^{1/2} \bbf{V} + \bbf{Z}$, where $(\bbf{V},\bbf{Z}) \sim P_V \otimes \cN(\bbf{0},\bbf{I}_k)$.
		Let $i \in \{1, \dots, r\}$ and let us chose $h(\bbf{Y}) = \frac{1}{\sqrt{\mu_i^{(n)}}} \bbf{Y}$. Compute
		\begin{align*}
		\bbf{e}_i^{(n)\intercal} \E \Big[(\bbf{V} - h(\bbf{Y}))(\bbf{V} - h(\bbf{Y}))^{\intercal}\Big] \bbf{e}_i^{(n)}
		&=
		\E \Big[\big(\bbf{V}^{\intercal} \bbf{e}_i^{(n)} - (\mu_i^{(n)})^{-1/2} \bbf{V}^{\intercal} (\bbf{q}_u^{(n)})^{1/2} \bbf{e}_i^{(n)} -(\mu_i^{(n)})^{-1/2} \bbf{Z}^{\intercal} \bbf{e}_i^{(n)} \big)^2\Big]
		\\
		&= \frac{1}{\mu_i^{(n)}} \xrightarrow[n \to \infty]{} 0 \,,
		\end{align*}
		where we used the fact that $\bbf{e}_i^{(n)}$ is an eigenvectors of $(\bbf{q}_u^{(n)})^{1/2}$ associated with the eigenvalue $\sqrt{\mu_i^{(n)}}$. We conclude using Equation~\ref{eq:comm}.
	\end{proof}
\\

	Using the Lemma above
	$$
	\Tr \left[
		\bbf{q}_u^{(n)} F_{P_V}(\lambda \bbf{q}_u^{(n)})
	\right]
	=
	\sum_{i=1}^k \mu_{i}^{(n)} \left(\bbf{e}_i^{(n)\intercal} F_{P_V}(\lambda \bbf{q}_u^{(n)}) \bbf{e}_i^{(n)}\right)
	\xrightarrow[n \to \infty]{}
	+ \infty \,.
	$$
	By~\eqref{eq:gradient_phi} we have
	$$
	(1-\delta_n) \Tr[\bbf{q}_v^* \bbf{q}_u^{(n)}] = \Tr[\bbf{q}_v^{(n)} \bbf{q}_u^{(n)}] = 
	\Tr \left[
		\bbf{q}_u^{(n)} F_{P_V}(\lambda \bbf{q}_u^{(n)})
	\right],
	$$
	and we conclude that $\Tr[\bbf{q}_v^* \bbf{q}_u^{(n)}] \xrightarrow[n \to \infty]{} + \infty$. 
	Recall that $\displaystyle g: \bbf{q}_v \mapsto \inf_{\bbf{q}_u \in S_k^+} \psi_{P_V}(\lambda \bbf{q}_u) - \frac{\lambda}{2} \Tr[\bbf{q}_v \bbf{q}_u]$ is concave on its domain $D_V$. 
	Since $\bbf{q}_v^{(n)} \in \stackrel{\circ}{K_V}$, we have by Lemma~\ref{lem:unique_minimizer}, $\displaystyle \nabla g (\bbf{q}_v^{(n)}) = - \frac{\lambda}{2} \bbf{q}_u^{(n)}$.
	By concavity we have then
	$$
	g(\bbf{q}_v^*) \leq g(\bbf{q}_v^{(n)}) - \frac{\lambda}{2} \Tr[(\bbf{q}_v^* - \bbf{q}_v^{(n)}) \bbf{q}_u^{(n)}]
	= g(\bbf{q}_v^{(n)}) - \frac{\lambda}{2}\delta_n \Tr[\bbf{q}_v^* \bbf{q}_u^{(n)}] \,.
	$$
	Consequently
	$$
	f(\bbf{q}_v^{(n)}) = \alpha g(\bbf{q}_v^{(n)}) + \psi_{P_U}(\lambda \bbf{q}_v^{(n)})
	\geq
	\alpha g(\bbf{q}_v^*) + \frac{\alpha \lambda}{2}\delta_n \Tr[\bbf{q}_v^* \bbf{q}_u^{(n)}] + \psi_{P_U}(\lambda \alpha \bbf{q}_v^*(1 - \delta_n)) \,.
	$$
	$\psi_{P_U}$ has bounded gradient and is thus $L$-Lipschitz for some constant $L>0$. We have then
	\begin{align*}
		f(\bbf{q}_v^{(n)})
		&\geq
		\alpha g(\bbf{q}_v^*) + \frac{\alpha \lambda}{2}\delta_n \Tr[\bbf{q}_v^* \bbf{q}_u^{(n)}] + \psi_{P_U}(\lambda \alpha \bbf{q}_v^*) - L\lambda \alpha \|\bbf{q}_v^*\|\delta_n
		\\
		&\geq
		f(\bbf{q}_v^*) + \delta_n \lambda \alpha \left(\frac{1}{2} \Tr[\bbf{q}_v^* \bbf{q}_u^{(n)}]  - L\|\bbf{q}_v^*\| \right)
		\\
		&> f(\bbf{q}_v^*) \,,
	\end{align*}
	for $n$ large enough. This is absurd. We conclude that we can not have $\bbf{q}_v^* \in D_V \setminus \stackrel{\circ}{K_V}$.
\end{proof}

\section{Proofs of the decorrelation principles}

\subsection{Proof of Theorem~\ref{th:overlap_concentration}}\label{sec:proof_overlap_concentration}

Define for $\bbf{x} \in \R^n$
$$
U(\bbf{x}) 
= \frac{1}{n s_n} \frac{\partial}{\partial a} h_{n,a}(\bbf{x})
= \frac{1}{n} \sum_{i=1}^n \frac{1}{\sqrt{s_n}} Z_i x_i + 2 a x_i X_i - a x_i^2 \,.
$$
\begin{lemma}\label{lem:concentration_energy}
	Under the conditions of Theorem~\ref{th:overlap_concentration},
	$$
	\int_{1}^2 \E \Big\langle \big| U(\bbf{x}) - \E \langle U(\bbf{x}) \rangle_{n,a} \big| \Big\rangle_{\!n,a} da \xrightarrow[n \to \infty]{} 0 \,.
	$$
\end{lemma}
Before proving Lemma~\ref{lem:concentration_energy}, let us show how it implies Theorem~\ref{th:overlap_concentration}.

\begin{proof}[of Theorem~\ref{th:overlap_concentration}]
	By the bounded support assumption on $P$, the overlap between two replicas is bounded by $K^2$, thus
	\begin{equation}\label{eq:gg_trick}
		\left| \E \!\left\langle U(\bbf{x}^{(1)}) \, \bbf{x}^{(1)}\! .\bbf{x}^{(2)} \right\rangle_{\!n,a} 
		\!\!- \E\!\left\langle \bbf{x}^{(1)}\! .\bbf{x}^{(2)} \right\rangle_{\!n,a} \!\! \E\! \left\langle U(\bbf{x}^{(1)}) \right\rangle_{\!n,a} \right| 
		\leq K^2  \E\! \left\langle \big| U(\bbf{x}) - \E\! \left\langle U(\bbf{x}) \right\rangle_{\!n,a} \big| \right\rangle_{\!n,a}.
	\end{equation}
	Let us compute the left-hand side of~\eqref{eq:gg_trick}.
	By Gaussian integration by parts and using the Nishimori identity (Proposition~\ref{prop:nishimori}) we get $\E \big\langle U(\bbf{x}^{(1)}) \big\rangle_{\!n,a} = 2 a \, \E \big\langle \bbf{x}^{(1)} \! . \bbf{x}^{(2)} \big\rangle_{\!n,a}$. Therefore
	$$
	\E\left\langle \bbf{x}^{(1)}\! . \bbf{x}^{(2)} \right\rangle_{\!n,a} \E \left\langle U(\bbf{x}^{(1)}) \right\rangle_{\!n,a} 
	= 2 a \left(\E \left\langle \bbf{x}^{(1)} \! . \bbf{x}^{(2)} \right\rangle_{\!n,a}\right)^{\!2}.
	$$
	Using the same tools, we compute
	\begin{align*}
		\E&  \big\langle U(\bbf{x}^{(1)})(\bbf{x}^{(1)}\!.\bbf{x}^{(2)}) \big\rangle_{\!n,a} 
		\\
		&= 2a \E \big\langle (\bbf{x}^{(1)}\!.\bbf{X})(\bbf{x}^{(1)}\!.\bbf{x}^{(2)}) \big\rangle_{\!n,a} + \frac{1}{n \sqrt{s_n}} \sum_{i=1}^n \E Z_i \big\langle x^{(1)}_i (\bbf{x}^{(1)}\!.\bbf{x}^{(2)}) \big\rangle_{\!n,a} - \frac{a}{n} \sum_{i=1}^n \E \big\langle (x^{(1)}_i)^2 (\bbf{x}^{(1)}\!.\bbf{x}^{(2)}) \big\rangle_{\!n,a} \\
		&= 2a \E \big\langle (\bbf{x}^{(1)}\!.\bbf{X})(\bbf{x}^{(1)}\!.\bbf{x}^{(2)}) \big\rangle_{\!n,a} + a \E \big\langle (\bbf{x}^{(1)}\!.\bbf{x}^{(2)})^2 \big\rangle_{\!n,a} - a \E \big\langle (\bbf{x}^{(1)}\!.\bbf{x}^{(3)} + \bbf{x}^{(1)}\!.\bbf{x}^{(4)})(\bbf{x}^{(1)}\!.\bbf{x}^{(2)})\big\rangle_{\!n,a} \\
		&= 2a \E \big\langle (\bbf{x}^{(1)}\!.\bbf{x}^{(2)})^2 \big\rangle_{\!n,a} \,.
	\end{align*}
	Thus, by~\eqref{eq:gg_trick} we have for all $a \in [1,2]$
	$$
	\E \Big\langle \big(\bbf{x}^{(1)}\!.\bbf{x}^{(2)} - \E\big\langle \bbf{x}^{(1)}\!.\bbf{x}^{(2)} \big\rangle_{\!n,a}\big)^2 \Big\rangle_{\!n,a}
	\leq
	\frac{K^2}{2}  \E \left\langle \big| U(\bbf{x}) - \E \left\langle U(\bbf{x}) \right\rangle_{\!n,a} \! \big| \right\rangle_{\!n,a} ,
	$$
	and we conclude by integrating with respect to $a$ over $[1,2]$ and using Lemma~\ref{lem:concentration_energy}.
\end{proof}
\\

\begin{proof}[of Lemma~\ref{lem:concentration_energy}]
	$\phi$ is twice differentiable on $[1/2,3]$, and for $a \in [1/2,3]$
	\begin{align}
		\phi'(a) &= \langle U(\bbf{x}) \rangle_{n,a} \,, \\
		\phi''(a) &= n s_n \big(\langle U(\bbf{x})^2 \rangle_{n,a} -\langle U(\bbf{x}) \rangle_{n,a}^2\big)
		+ \frac{1}{n} \sum_{i=1}^n \Big\langle 2 x_i X_i - x_i^2 \Big\rangle_{n,a} \,. \label{eq:der_sec_phi}
	\end{align}
	Thus $\big\langle (U(\bbf{x}) - \langle U(\bbf{x})\rangle_{n,a})^2 \big\rangle_{n,a} \leq \frac{1}{n s_n} (\phi''(a) + 2 K^2)$ and by integration with respect to $a \in [1,2]$,
	\begin{align*}
		\int_{1}^2 \E \big\langle (U(\bbf{x}) - \langle U(\bbf{x})\rangle_{n,a})^2 \big\rangle_{n,a} da \leq \frac{1}{n s_n} \left(\E \phi'(2) - \E \phi'(1) + 2 K^2\right) = O(n^{-1} s_n^{-1}) \,,
	\end{align*}
	because $\E \phi'(a) = 2 a \E \langle \bbf{x} .\bbf{X} \rangle_{n,a}$.
	Hence $\int_{1}^2 \E \big\langle |U(\bbf{x}) - \langle U(\bbf{x})\rangle_{n,a}| \big\rangle_{\!n,a} da \xrightarrow[n \to \infty]{} 0$. It remains to show that $\int_{1}^2 \E \big| \langle U(\bbf{x}) \rangle_{n,a} - \E \langle U(\bbf{x})\rangle_{n,a}\big| da \xrightarrow[n \to \infty]{} 0$.
	\\

	We will use the following lemma on convex functions (from~\cite{panchenko2013SK}, Lemma 3.2).
	\begin{lemma}
		If $f$ and $g$ are two differentiable convex functions then, for any $b >0$
		$$
		|f'(a) - g'(a)| \leq g'(a+b) - g'(a-b) + \frac{d}{b} \,,
		$$
		where $d = |f(a+b) - g(a+b)| + |f(a-b) - g(a-b)| + |f(a) - g(a)|$.
	\end{lemma}
	We apply this lemma to $\lambda \mapsto \phi(\lambda) + \frac{3}{2} K^2 \lambda^2$ and $\lambda \mapsto \E \phi(\lambda) + \frac{3}{2} K^2 \lambda^2$ that are convex because of~\eqref{eq:der_sec_phi} and the bounded support assumption on $P$.
	Therefore, for all $a \in [1,2]$ and $b \in (0,1/2)$ we have
	\begin{equation} \label{eq:appli_lem_convex}
		\E |\phi'(a) - \E \phi'(a)| \leq \E \phi'(a+b) - \E \phi'(a-b) + 6 K^2 b + \frac{3 v_n(s_n)}{b} \,.
	\end{equation}
	Notice that for all $a \in [1/2,3], \ |\E \phi'(a) | = | 2a \E \langle \bbf{x}.\bbf{X} \rangle_{n,a} | \leq 6 K^2$. Therefore, by the mean value theorem
	\begin{align*}
		\int_1^2 \big(\E \phi'(a+b) - \E \phi'(a-b)\big) da 
		&= \big(\E \phi(b+2) - \E \phi(2-b)\big) + \big(\E \phi(1-b) - \E \phi(1+b)\big) \\
		&= \big(\E \phi(b+2) - \E \phi(b+1)\big) - \big(\E \phi(2-b) - \E \phi(1-b)\big) \\
		&\leq 24 K^2 b \,.
	\end{align*}
	Combining this with equation~\eqref{eq:appli_lem_convex}, we obtain
	\begin{equation} \label{eq:control_b}
		\forall b \in (0,1/2), \ \int_1^2 \E | \phi'(a) - \E \phi'(a) | da  \leq C \Big(b + \frac{v_n(s_n)}{b}\Big) \,,
	\end{equation}
	for some constant $C>0$ depending only on $K$.
	The minimum of the right-hand side is achieved for $b=\sqrt{v_n(s_n)} < 1/2$ for $n$ large enough. Then,~\eqref{eq:control_b} gives
	\begin{align*}
		\int_1^2 \E \big| \langle U(\bbf{x}) \rangle_{n,a} - \E \langle U(\bbf{x}) \rangle_{n,a} \big| da
		= \int_1^2 \E | \phi'(a) - \E \phi'(a) | da
		\leq C \sqrt{v_n(s_n)} \xrightarrow[n \to \infty]{} 0 \,.
	\end{align*}
\end{proof}

\subsection{Proof of Proposition~\ref{prop:overlap_uv}} \label{sec:proof_overlap_uv}
Define
$$
\phi: (a_u,a_v) \mapsto \frac{1}{n s_n} \log \left( \sum_{\bbf{u},\bbf{v} \in S^n} P_0^{\otimes n}(\bbf{u},\bbf{v}) e^{H^{\text{(tot)}}_n(\bbf{u},\bbf{v})} \right)
\ \text{ and } \
v_n(s_n) = \sup_{1/2 \leq a_u,a_v \leq 3} \E | \phi(a_u,a_v) - \E \phi(a_u,a_v) |.
$$
\begin{lemma}\label{lem:v_n_uv}
	$$
	v_n(s_n) = O(n^{-1/2} s_n^{-1}) \,.
	$$
\end{lemma}
\begin{proof}
	Let $a_u,a_v \in [1/2,3]$.
	We first work conditionally to $(\bbf{U},\bbf{V})$, i.e.\ suppose $\bbf{U}$ and $\bbf{V}$ to be fixed and consider the function
	$$
f: \big(\bbf{Z}, \bbf{Z}^{(u)}, \bbf{Z}^{(v)}, \bbf{z}^{(u)}, \bbf{z}^{(v)}\big)
\mapsto 
\phi(a_u,a_v) \,.
	$$
	It is not difficult to verify that
	$$
	\|\nabla f\|^2 \leq C n^{-1} s_n^{-2} \,,
	$$
for some constant $C>0$ that only depend on $q_u$, $q_v$ and $K$.
	Let $\E_z$ denote the expectation with respect to the Gaussian random variables $\big(\bbf{Z}, \bbf{Z}^{(u)}, \bbf{Z}^{(v)}, \bbf{z}^{(u)}, \bbf{z}^{(v)}\big)$.
	The Gaussian Poincaré inequality (see~\cite{boucheron2013concentration} Chapter 3) gives then 
	$$
	\E_z \Big[\big(\phi(a_u,a_v) - \E_z \phi(a_u,a_v)\big)^2\Big] \leq C n^{-1} s_n^{-2} \,.
	$$
	Now we are going to show that $\E_{z} [\phi(a_u,a_v)]$ concentrates around its expectation (with respect to $\bbf{U},\bbf{V}$). $E_{z} [ \phi(a_u,a_v)]$ can be seen as a function of $(\bbf{U},\bbf{V})$. We can easily verify that this function has ``the bounded differences property'' (see~\cite{boucheron2013concentration}, section 3.2) because $\bbf{U}$ and $\bbf{V}$ have bounded support. Indeed, for $1 \leq i \leq n$,
	$$
	\Big|
	\frac{\partial}{\partial U_i} \E_{z} [\phi(a_u,a_v)]
	\Big|
	=
	\Big|
	\frac{1}{n s_n}
	\E_{z} 
	\Big\langle
		s_n a_u^2 u_i+
	\sum_{j=1}^n
		\frac{t}{n} u_i v_j V_j
	\Big\rangle_{\!\! n,a} 
	\Big|
	\leq C' n^{-1} s_n^{-1}\,,
	$$
	for some constant $C'$ depending only on $t$, $q_u$, $q_v$ and $K$. We have the same inequality for the partial derivatives with respect to the $V_j$.  
	Then Corollary~3.2 from~\cite{boucheron2013concentration} (which is a consequence of the Efron-Stein inequality) gives
	$$
	\E ( \E_{z} [\phi(a_u,a_v)] - \E [\phi(a_u,a_v)])^2 \leq C'' n^{-1} s_n^{-2} \,,
	$$
	for some constant $C''$ depending only on $t$, $q_u$, $q_v$ and $K$.
	We conclude that there exists a constant $C'''$ such that for all $a_u,a_v \in [1/2,3]$, $\E | \phi(a_u,a_v) - \E \phi(a_u,a_v) | \leq C''' n^{-1/2} s_n^{-2}$.
\end{proof}
\\

\begin{proof}[of Proposition~\ref{prop:overlap_uv}]
	The choice of $s_n=n^{-1/4}$ and Lemma~\ref{lem:v_n_uv} above implies
	$$
	\begin{cases}
		v_n(s_n) \xrightarrow[n \to \infty]{} 0\,, \\
		n s_n \xrightarrow[n \to \infty]{} + \infty\,.
	\end{cases}
	$$
	We deduce then the proposition from the fact that the proof we gave of Theorem~\ref{th:overlap_concentration} remains valid if one consider the overlap over only the first half of the components of the replicas (with a perturbation involving only the first half of the components of $\bbf{X}$). 
\end{proof}
\end{appendices}

{%
	\bibliographystyle{plain}
	\bibliography{./references.bib}
}

\end{document}